\pdfoutput=1
\documentclass[11pt,leqno]{article}

\usepackage{amsmath,amsthm,amssymb}
\usepackage{authblk,subfigure,booktabs,enumerate,multirow}
\usepackage{graphicx}
\usepackage{color}
\usepackage{bm}
\usepackage{url}
\usepackage[round]{natbib}
\usepackage{enumitem}

\usepackage{geometry}
\usepackage{listings}
\usepackage{apptools}

\theoremstyle{plain}
\newtheorem{thm}{Theorem}
\newtheorem{prop}{Proposition}

\theoremstyle{remark}
\newtheorem*{rem}{Remark}

\theoremstyle{definition}

\AtAppendix{\counterwithin{thm}{section}}

\makeatletter

\numberwithin{equation}{section}
\numberwithin{figure}{section}
\numberwithin{table}{section}

\title{Continuum-marginal optimal transport:\\
  a mesh-free kernel method}
\author[1]{Yumiharu Nakano\thanks{E-mail: nakano@comp.isct.ac.jp}}
\affil[1]{Department of Mathematical and Computing Science \protect \\
  Institute of Science Tokyo}

\date{\today}

\begin{document}

\maketitle
\begin{abstract}
In this paper we study \emph{continuum-marginal optimal transport}.
Given a time-continuous family of probability marginals, the problem
is to recover the minimum-energy velocity field whose flow reproduces
every marginal.
This problem is the continuum limit of the classical two-marginal
Benamou--Brenier formulation, and also the deterministic limit of the
Nelson problem of stochastic optimal transport.
We propose a practical mesh-free solver for this problem.
The weak continuity equation is embedded in a reproducing kernel
Hilbert space, yielding a sample-only objective that requires no spatial discretization.
The velocity is parametrized by any linear-in-parameters dictionary or
neural network, and is optimized by mini-batch stochastic methods.
Synthetic experiments confirm that the method achieves accurate drift
recovery and marginal consistency.
The same computational framework also applies to the Nelson problem.

\begin{flushleft}
{\bf Key words}:
continuum-marginal optimal transport,
Monge optimal transport, Benamou--Brenier formulation, 
reproducing kernel Hilbert space, mesh-free method,
Nelson problem. 
\end{flushleft}
\end{abstract}

\section{Introduction}
\label{sec:intro}

The Monge optimal transport problem, in its classical form, consists of
finding a deterministic map that sends a source probability distribution
to a target probability distribution with minimum cost.
\citet{ben-bre:2000} gave a dynamic reformulation of
this problem, in which the static map is replaced by a time-dependent
velocity field: one seeks a pair of a velocity and a density which
minimizes the kinetic energy, subject to the continuity equation and
to the given distributions as endpoint constraints.
The optimal velocity field generates deterministic ODE paths that
realize the Monge map, and no stochasticity is involved.
This two-marginal dynamic formulation has been studied extensively in
computational optimal transport.

\paragraph{From two marginals to a continuum of marginals.}
In many applications, the evolving density is observed not only at the
endpoints but also at \emph{all} intermediate times.
Examples include particle image velocimetry, crowd monitoring, and
density-functional molecular dynamics, in which the density is
observed continuously while individual trajectories are not tracked.
Motivated by such situations, we consider the following generalization
of the Benamou--Brenier problem.
Let $\mu=\{\mu_t\}_{0\le t\le T}$ be a given continuous family of
probability measures on $\mathbb{R}^d$.
We seek a velocity field $u$ that minimizes
\begin{equation}\label{eq:P0-intro}
 \int_0^T\!\int_{\mathbb{R}^d}|u(t,x)|^2\,p(t,x)\,dx\,dt
\end{equation}
over all $u$ for which the continuity equation
\[
 \partial_t p+\mathrm{div}(up)=0
\]
holds, together with $p(t,\cdot)=d\mu_t/dx$ for every $t\in[0,T]$.
We call \eqref{eq:P0-intro} the \emph{continuum-marginal optimal
transport} problem, following the terminology of
\citet{Mikami2021}.%
\footnote{For brevity, in the rest of the paper we also use the
shorthand \emph{all-time optimal transport} (all-time OT), which
reflects Mikami's own phrasing ``marginals at all times''. The
shorthand reads more naturally when the term recurs many times in
the body. 
We retain the formal name \emph{continuum-marginal} in the title,
abstract and keywords, where its descriptive precision is needed,
and emphasize that the two refer to the same problem.}
The name \emph{continuum-marginal} refers to the fact that a marginal
constraint is imposed at every time $t\in[0,T]$, that is, on a
continuum of times.  The problem can be reached from the classical
theory in two ways.  First, as observed by \citet{Mikami2021}, it is
the Benamou--Brenier formulation of two-marginal transport with the
constraint extended from the two endpoints to all times.  Second, it
is the limit of multi-marginal optimal transport as the number of
prescribed marginals grows: constraining $\mu_{t_1},\ldots,\mu_{t_K}$
at $K$ times and letting $K\to\infty$ with the mesh shrinking turns
the finite set of constraints into an entire curve.  The first
description places the problem next to transport with terminal
conditions, and the second places it next to the discrete
multi-marginal methods reviewed below, of which it is the limiting
case.  The second also makes explicit that the unknown is a velocity
field on $[0,T]$ rather than a finite family of transport maps.
When only $\mu_0$ and $\mu_T$ are prescribed, \eqref{eq:P0-intro}
reduces to the classical $W_2^2$ cost.
When the full continuum of marginals is prescribed, the feasible set
is smaller, and the optimal velocity contains dynamical information
that cannot be recovered from the endpoints alone.

\citet{Mikami2021} established existence and uniqueness of the
minimizer as well as the gradient structure
$u^*(t,x)=\nabla_x\psi(t,x)$.
The optimal paths are ODE solutions and can be regarded as
\emph{continuous-time Monge maps}.
Problem \eqref{eq:P0-intro} is also the small-noise limit
($\sigma\to 0$) of the \emph{Nelson problem}
\citep{Nelson1985,Mikami1990,Mikami2021}.
The Nelson problem is the stochastic analogue of \eqref{eq:P0-intro},
in which the ODE $\dot X_t=u(t,X_t)$ is replaced by the It\^o SDE
$dX_t=u\,dt+\sigma\,dW_t$ and the continuity equation is replaced by
the Fokker--Planck equation, under the same marginal constraints
$X_t\sim\mu_t$ for every $t\in[0,T]$.
The Nelson problem is also called a Schr\"odinger
bridge with marginals fixed at all times. 
Hence \eqref{eq:P0-intro} serves as a deterministic counterpart that
connects Monge transport and the Schr\"odinger bridge.

Marginal data of this form arise in several contexts.  The continuum
is the population target of the estimator, and in practice it is
approximated rather than observed directly.  Three settings come close
to it.  Live-cell and particle-imaging experiments record population
densities at a temporal resolution much finer than the time scale of
the motion and so sample the marginal path densely.  Simulations and
model-reduction pipelines provide $\mu_t$ at any requested time.
Fitted density models allow sampling at arbitrary times.  The
estimator of Section~\ref{sec:estimator} uses only $N$ particles at
each of $M$ time points, so the continuum enters only through the
population quantity that this $M$-sample average estimates, with bias
quantified in~\eqref{eq:Qhat-bias}.  The single-cell data of
Experiment~6 in Section~\ref{sec:exp-eb} have only five observation
days, so they sit far from this continuum and closer to a
multi-marginal setting.  We therefore present that experiment as an
application to real measurements rather than as a test of the
continuum formulation.

\paragraph{Related work.}
The two-marginal Benamou--Brenier formulation \citep{ben-bre:2000} is
solved on a fixed Eulerian grid by augmented Lagrangian or proximal
methods \citep{Papadakis2014}.
Flow matching \citep{Lipman2023} and stochastic interpolants
\citep{Albergo2023} learn an ODE velocity from source and target
distributions by neural networks, and minibatch OT variants
\citep{Tong2024} improve scalability.
These methods treat only the \emph{two-marginal} setting and do not
impose intermediate marginal constraints.

\emph{Learning dynamics from time-indexed samples.}
A separate line of work in machine learning recovers dynamics from
snapshots without paired trajectories.  Action
matching~\citep{Neklyudov2023} learns the velocity field of an
observed evolution by fitting an action functional to samples, and is
the closest in spirit to the present formulation: both recover a
velocity from marginals alone, with action matching regressing a
scalar potential and the estimator here penalizing the weak continuity
residual in a reproducing kernel Hilbert space.
TrajectoryNet~\citep{tong2020trajectorynet} fits a continuous
normalizing flow to consecutive single-cell snapshots with optimal
transport and density regularizers, and simulation-free score and flow
matching~\citep{tong2024sf2m} recovers a Schr\"odinger bridge between
observed marginals without simulating the dynamics inside the loss.
These methods impose the observed marginals at the sampling times
rather than on a continuum, and none of them supplies a convergence
theory for the relaxed problem.

\emph{Discrete multi-marginal approaches.}
Several approaches are available when marginals are observed at a
finite set of time points.
Waddington-OT (WOT) \citep{Schiebinger2019} chains entropic OT
couplings between consecutive snapshots and reconstructs the drift by
barycentric projection.
The multi-marginal Monge formulation of \citet{nak-sai:2026}
parametrizes $K-1$ deterministic transport maps and penalizes an MMD
distance between the pushed-forward and observed marginals.
Like the present paper, the method of \citet{nak-sai:2026} seeks a
minimum-energy velocity field, but under a finite number of marginal
constraints.
\citet{Albergo2024multimarginal} extended stochastic
interpolants to a multimarginal generative model: given $K{+}1$
densities $\rho_0,\ldots,\rho_K$, they defined a barycentric
interpolant on the $K$-simplex and learned a velocity field via
conditional-expectation regression.
Their method gives \emph{a} velocity that reproduces the marginals,
but it does not minimize the kinetic energy.
It therefore addresses a different problem from the one considered
here.
In contrast, the present paper imposes a continuum of marginal
constraints and explicitly minimizes the Benamou--Brenier energy,
recovering the unique Monge-optimal velocity field.
In our experiments, we compare our method with \citet{nak-sai:2026}
and with WOT.  These are the two existing methods that take
all-time marginal data as input and produce a velocity field as
output.  The method of \citet{nak-sai:2026} minimizes a
piecewise-affine multi-marginal OT cost, and WOT applies the
barycentric projection of pairwise entropic couplings.  Neither
method directly minimizes the Benamou--Brenier energy.  A
comparison with \citet{Albergo2024multimarginal} would measure
whether optimal-transport structure matters beyond marginal
reproduction; this is a complementary question and is left to
future work.

\emph{Stochastic counterpart.}
For the Nelson problem introduced above, Lavenant et al.\
\citep{lav:2024} developed a convex variational framework (global
Waddington-OT) for stochastic trajectory inference, and proved
consistency of the estimator.
Their approach uses entropy regularization with respect to a Brownian
reference measure, and so requires $\sigma>0$.
It does not apply to the deterministic case $\sigma=0$.
For the two-endpoint stochastic problem, Schr\"odinger bridge
algorithms \citep{DeBortoli2021} also provide practical solutions, but
these again target the stochastic setting.

Problem \eqref{eq:P0-intro} is the natural deterministic foundation of
these stochastic formulations
(see \citealp[Theorem~3.7]{Mikami2021}), but no practical numerical method
has been available for it.
Grid-based OT solvers scale poorly with the spatial dimension, and
flow matching methods do not incorporate intermediate marginal
information.

\paragraph{Our contribution.}
In this paper, we propose a practical mesh-free solver for
\eqref{eq:P0-intro}.
We replace the continuity-equation constraint by its weak form and
embed the residual in a reproducing kernel Hilbert space (RKHS)
\citep{Scholkopf2002}.
The squared RKHS norm then admits a closed-form expression that
involves only kernel evaluations and values of the velocity at sample
points, and is estimated by a sample average from mini-batches.
The velocity $u(t,x)$ is parametrized by a linear-in-parameters
dictionary or by a neural network, and is optimized by stochastic
methods without any spatial discretization.
The same computational framework also applies to the stochastic
setting ($\sigma>0$): the continuity equation is replaced by the
Fokker--Planck equation, a Laplacian term is added to the RKHS
operator, and the resulting problem is the Nelson problem.  We treat
this stochastic case only as a proof-of-concept extension (see
Sections~\ref{sec:stochastic}, \ref{sec:exp-stochastic}); a full
theoretical analysis is left for future work.

\paragraph{Outline.}
Section~\ref{sec:2} formulates the continuum-marginal OT problem and
summarizes the theoretical foundations of \citet{Mikami2021}.
Section~\ref{sec:3} derives the RKHS-based objective and provides explicit
kernel formulas, with Section~\ref{sec:stochastic} treating the
stochastic extension to the Nelson problem.
Section~\ref{sec:numerical} presents numerical experiments, beginning with
the original deterministic case and then demonstrating the stochastic
extension.

\paragraph{Notation.}
For a set $A$ in a topological space, a $\sigma$-finite measure $\nu$ on $A$,
and $p\ge 1$, we write $L^p(A;\mathbb{R}^m,\nu)$ for the space of
Borel measurable functions $f\colon A\to\mathbb{R}^m$ with
$\int_A|f|^p\,d\nu<\infty$; when $\nu$ is the Lebesgue measure on
$\mathbb{R}^k$ we abbreviate $L^p(\mathbb{R}^k):=L^p(\mathbb{R}^k;\mathbb{R},\nu)$.
For an open set $\Omega\subset\mathbb{R}^n$, $C^k(\Omega)$ denotes the space
of $k$-times continuously differentiable functions;
$C^k_b(\Omega)$ is the subspace whose derivatives up to order~$k$ are
bounded; $C_0^\infty(\Omega)$ is the space of infinitely differentiable
functions with compact support.
We write $\nabla_x$ and $\Delta_x$ for the gradient and Laplacian in
the spatial variable $x\in\mathbb{R}^d$, and
$\mathrm{div}_x(\cdot)$ for the divergence.
For a mean vector $m\in\mathbb{R}^d$ and a symmetric positive definite
matrix $\Sigma\in\mathbb{R}^{d\times d}$, $\mathcal{N}(m,\Sigma)$
denotes the $d$-dimensional Gaussian distribution with mean $m$ and
covariance $\Sigma$; in the one-dimensional case we write
$\mathcal{N}(m,\sigma^2)$ with variance $\sigma^2$.

\section{Problem formulation}
\label{sec:2}

Let $\mu=\{\mu_t\}_{0\le t\le T}$ be a given continuous family of Borel 
probability measures on $\mathbb{R}^d$, each admitting a density
$p(t,x)=d\mu_t/dx$.
We consider the ODE $\dot X_t = u(t,X_t)$, where the drift $u$ belongs to
\[
 \mathcal{U}:=\bigl\{u\in L^1([0,T]\times\mathbb{R}^d;\mathbb{R}^d,p(t,x)\,dt\,dx)
 \;:\; \mathrm{div}_x(pu)\in L^1((0,T)\times\mathbb{R}^d)\text{ in the weak sense}\bigr\}.
\]
The divergence condition ensures that the distributional residual of
the continuity equation is a function, not merely a distribution;
without some such condition the problem is not well-posed for rough
drifts.
Denote by $\mathcal{U}_0$ the set of admissible velocity fields:
$u\in\mathcal{U}_0$ if
$u\in \mathcal{U}$ and
for any $\varphi\in C^{1}_b([0,T]\times\mathbb{R}^d)$ and $t\in[0,T]$,
\begin{equation}\label{eq:weak-CE}
 \int_{\mathbb{R}^d}\varphi(t,x)\,p(t,x)\,dx
 - \int_{\mathbb{R}^d}\varphi(0,x)\,p(0,x)\,dx
 = \int_0^t\!\int_{\mathbb{R}^d}
   \bigl(\partial_s\varphi + u(s,x)^{\mathsf{T}}\nabla_x\varphi\bigr)
   p(s,x)\,dx\,ds.
\end{equation}
This is the weak formulation of the continuity equation
\begin{equation}\label{eq:continuity}
  \partial_t p + \mathrm{div}(up)=0.
\end{equation}
Our problem is:
\begin{equation}
\tag{$P$}\label{eq:P0}
  V_0(\mu)
  := \inf_{u\in\mathcal{U}_0}
     \int_{[0,T]\times\mathbb{R}^d} |u(t,x)|^2\,p(t,x)\,dx\,dt.
\end{equation}
When only two marginals $\mu_0$ and $\mu_T$ are prescribed, $(P)$ reduces to the
\emph{Benamou--Brenier formulation} of the $W_2$ optimal transport
\citep{ben-bre:2000}.
Problem $(P)$ can equivalently be stated over absolutely continuous paths. Indeed, we have 
\[
 V_0(\mu)= \inf\mathbb{E}\!\int_0^T\!|\dot X(t)|^2\,dt
\]
where the infimum is taken over all absolutely continuous $(X(t))_{0\le t\le T}$ such that $\mathbb{P}\circ X(t)^{-1}=\mu_t$ for all $t\in [0,T]$, 
since any admissible velocity--density pair $(u,p)$ can be lifted to a pathwise
ODE solution by the \emph{superposition principle} of Ambrosio
\citep[Theorem~3.9]{Mikami2021}.

The theoretical properties of $(P)$ are established by
\citet[Section~3.2.2]{Mikami2021}.
\begin{prop}[{\citealp[Theorem~3.8, Proposition~3.7]{Mikami2021}}]
\label{prop:existence}
Suppose $V_0(\mu)<\infty$.
Then $(P)$ admits a unique minimizer $u^*\in\mathcal{U}_0$, determined
$\mu_t(dx)\,dt$-a.e.
Every optimal path satisfies
\begin{equation}\label{eq:ode-minimizer}
  X(t) = X(0) + \int_0^t u^*(s,X(s))\,ds, \quad t\in[0,T],\;\; \text{a.s.}
\end{equation}
Moreover, the minimizer has gradient structure:
$u^*(t,x)=\nabla_x\psi(t,x)$ for some $\psi:[0,T]\times\mathbb{R}^d\to\mathbb{R}$ that is
locally weakly differentiable in~$x$.
\end{prop}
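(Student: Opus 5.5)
The plan is to treat $(P)$ as a convex minimization problem on a Hilbert space and use the direct method for existence, strict convexity for uniqueness, and a Lagrangian duality argument for the gradient structure. For fixed $p$, the map $u\mapsto pu$ is linear. The constraint \eqref{eq:weak-CE} is then an affine condition on the momentum $m:=pu$, and the cost $\int |u|^2 p = \int |m|^2/p$ is strictly convex in $m$ on $\{p>0\}$. Equivalently, the constraint is affine in $u$ within the Hilbert space $H:=L^2([0,T]\times\mathbb{R}^d;\mathbb{R}^d,p\,dt\,dx)$, and the cost is the squared norm in $H$.

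\textbf{Existence.} Since $V_0(\mu)<\infty$, I take a minimizing sequence $u_n\in\mathcal{U}_0$ bounded in $H$ and extract a weakly convergent subsequence $u_n\rightharpoonup u^*$ in $H$. For each test function $\varphi\in C^1_b$, the map $u\mapsto\int_0^t\!\int u^{\mathsf T}\nabla_x\varphi\,p$ is a continuous linear functional on $H$, because $\nabla_x\varphi\,\mathbf 1_{[0,t]}\in H$ (here $p\,dx$ is a probability measure and $\varphi$ has bounded derivatives). Hence \eqref{eq:weak-CE} passes to the limit. Weak lower semicontinuity of the norm then gives $\|u^*\|_H^2\le V_0(\mu)$.

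The subtle point is membership in $\mathcal{U}$. I have $u^*\in L^1$ with respect to $p\,dt\,dx$ by Cauchy--Schwarz. For the divergence requirement, I would show that the admissible set is closed when that condition is interpreted through \eqref{eq:weak-CE}. By the weak continuity equation, $\mathrm{div}_x(pu^*)=-\partial_t p$ in distributions, and the right-hand side is independent of $u$. So if some admissible $u$ has $\mathrm{div}_x(pu)\in L^1$, then every admissible $u$ does, since they share $\partial_t p$. This makes the divergence condition automatic along the sequence and in the limit.

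\textbf{Uniqueness.} The admissible set is convex, being affine in $u$, and $u\mapsto\|u\|_H^2$ is strictly convex. If $u_1$ and $u_2$ are both minimizers, then $(u_1+u_2)/2$ is admissible, and the parallelogram law gives $\|u_1-u_2\|_H=0$. Thus $u_1=u_2$ $\mu_t(dx)\,dt$-a.e.

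\textbf{ODE representation.} For an optimal path $X$, I use the superposition principle in the form quoted before the statement (Theorem~3.9 of Mikami). Set $v(t,x):=\mathbb{E}[\dot X(t)\mid X(t)=x]$. Then $v$ is admissible, and by Jensen's inequality $\|v\|_H^2\le \mathbb{E}\int|\dot X|^2 = V_0(\mu)$. Uniqueness forces $v=u^*$. Equality in Jensen's inequality then forces $\dot X(t)=u^*(t,X(t))$ a.s. for a.e.\ $t$, and integrating gives \eqref{eq:ode-minimizer}.

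\textbf{Gradient structure.} This step is the main obstacle. Let $G:=\overline{\{\nabla_x\varphi:\varphi\in C_0^\infty((0,T)\times\mathbb{R}^d)\}}^{H}$ be the closed subspace of gradients. The first-order condition, obtained by varying along admissible directions, is $u^*\perp w$ for every $w$ satisfying $\mathrm{div}_x(pw)=0$ weakly. The set of such $w$ is exactly $G^\perp$. Hence $u^*\in (G^\perp)^\perp=G$, so $u^*$ is an $H$-limit of gradients $\nabla_x\varphi_n$. The difficulty is to show that this limit is itself $\nabla_x\psi$ for a function $\psi$ that is locally weakly differentiable in $x$. The weight $p$ may degenerate, so closedness of gradients in a weighted space is not automatic.

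I would first try a local argument. On balls where $p(t,\cdot)$ is bounded below, a Poincaré inequality shows that $\varphi_n(t,\cdot)$ minus its average on the ball converges in $W^{1,1}_{\mathrm{loc}}$. Patching these limits over a countable cover gives $\psi$ with $\nabla_x\psi=u^*$ a.e. If positivity of $p$ is not available, I would fall back on Mikami's duality argument: identify $\psi$ as the maximizer of the dual problem $\sup_\psi\{\int\psi\,d\mu_T-\int\psi\,d\mu_0-\int\!\int(\partial_t\psi+\tfrac12|\nabla\psi|^2)p\}$. Its optimality conditions would directly give $u^*=\nabla_x\psi$.
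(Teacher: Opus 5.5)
Your existence, uniqueness and ODE steps are correct. The paper does not prove this proposition; it cites Mikami for it, and elsewhere recovers existence and uniqueness only indirectly, through the penalty scheme of Theorem~\ref{thm:convergence}. There it uses (A3) to identify $\mathrm{div}_x(pu_\infty)=-\partial_tp$, and convexity in Step~(iv). Your direct method in $H=L^2(p\,dt\,dx)$ needs neither the kernel nor (A3), because a single admissible field already gives $\partial_tp=-\mathrm{div}_x(pu)\in L^1$. Your conditional-expectation argument for \eqref{eq:ode-minimizer} is also sound: equality in Jensen gives $\mathbb{E}\int_0^T|\dot X(t)-u^*(t,X(t))|^2dt=0$, using the path--Eulerian equivalence the paper asserts.

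The gap is in the gradient structure, exactly where you locate it.

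\textbf{Membership in $G$ is not enough.} Your first-order condition proves only $u^*\in G$, and membership in $G$ does not yield a locally weakly differentiable potential. Take $d=1$. Any $w\in G^\perp$ forces $p(t,\cdot)w(t,\cdot)$ to be a constant $c(t)$ in $x$. Since $\int_{\mathbb{R}}dx/p(t,x)=\infty$ for every probability density, $c(t)\neq0$ would make $\int w^2p\,dx$ infinite. Hence $G^\perp=\{0\}$ and $G$ is all of $L^2(p\,dt\,dx)$. If $p(t,\cdot)$ behaves like $|x|^3$ near $0$, then $G$ contains $x^{-1}\mathbf{1}_{\{|x|<1\}}$. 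That field is not in $L^1_{\mathrm{loc}}(dx)$, so it is not $\partial_x\psi$ for any locally weakly differentiable $\psi$.

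\textbf{The local Poincar\'e patching needs an extra hypothesis.} It requires $p(t,\cdot)$ to be bounded below by a positive constant on balls, and the statement does not assume this. The paper's own Remark~\ref{rem:no-structure} attributes the gradient structure to Mikami only under confining assumptions on the marginal flow, which the statement drops. Even with such a bound, you still owe a common subsequence for a.e.\ $t$ and a measurable choice in $t$ of the additive constants.

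\textbf{The dual fallback is circular.} Test the weak continuity equation for $u^*$ against $\psi$. Your dual objective then becomes $\langle u^*,\nabla_x\psi\rangle_H-\tfrac12\|\nabla_x\psi\|_H^2=\tfrac12\|u^*\|_H^2-\tfrac12\|u^*-\nabla_x\psi\|_H^2$. So its supremum is attained by a genuine $\psi$ exactly when $u^*$ is already such a gradient, which is the claim to be proved.

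To close the argument you must add a hypothesis of this kind on $p$, for instance a version of $p(t,\cdot)$ locally bounded below for a.e.\ $t$. You then need to carry out the local Poincar\'e/Riesz argument rigorously under it.
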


If only $\mu_0$ and $\mu_T$ are prescribed, the minimizer of $(P)$ gives the
Benamou--Brenier velocity field, and the optimal paths are the McCann
displacement interpolants \citep{McCann1997}.
With the full continuum of marginals prescribed, the feasible set is
strictly smaller and the optimal velocity encodes richer dynamical information that cannot be
recovered from the endpoints alone.

Problem $(P)$ arises as the zero-noise limit of the \emph{Nelson problem}
(stochastic optimal transportation with marginals fixed at all times,
in the phrasing of \citealp{Mikami2021})
\citep{Nelson1985,Mikami1990,Mikami2021}.
Write $V_\varepsilon(\mu)$ for the value of the Nelson problem
with diffusion coefficient $\sigma=\sqrt{\varepsilon}$.
\citet[Theorem~3.7]{Mikami2021} shows that
\begin{equation}\label{eq:Veps-to-V0}
  \lim_{\varepsilon\to 0}V_\varepsilon(\mu) = V_0(\mu)
\end{equation}
and any weak limit point of a minimizer of
$V_\varepsilon(\mu)$ is a minimizer of
$V_0(\mu)$.
In the two-marginal case, Theorem~2.9 of \citep{Mikami2021} further shows that
the optimal drift converges to the Monge map
$D\varphi(\cdot)-\mathrm{Id}$,
with $\varphi$ the Brenier convex potential.
Thus $(P)$ sits at the deterministic endpoint of a continuous spectrum that
connects optimal transport ($\sigma=0$) to the Schr\"odinger bridge
($\sigma>0$).

\section{Methods}
\label{sec:3}

\subsection{Kernel embedding of the weak continuity equation}
\label{sec:3-embedding}

The weak continuity equation~\eqref{eq:weak-CE} can be rewritten as:
for any $t\in [0,T]$ and $\varphi\in C^{\infty}_0([0,T]\times\mathbb{R}^d)$, 
\begin{equation}
\label{eq:3.1}
 \int_0^t\int_{\mathbb{R}^d}p(s,x)\mathcal{A}^u\varphi(s,x)\, ds\,dx = \int_{\mathbb{R}^d}p(t,x)\varphi(t,x)\,dx - \int_{\mathbb{R}^d}p(0,x)\varphi(0,x)\,dx,
\end{equation}
where
\[
 \mathcal{A}^u\varphi = \partial_t\varphi + u^{\mathsf{T}}\nabla_x\varphi.
\]
\begin{rem}
In the stochastic extension ($\sigma>0$, Section~\ref{sec:stochastic}),
the operator becomes
$\mathcal{A}^u\varphi = \partial_t\varphi + u^{\mathsf{T}}\nabla_x\varphi
+ (\sigma^2/2)\Delta_x\varphi$.
All derivations below carry through with the Laplacian term included;
we present the $\sigma=0$ case for clarity.
\end{rem}

Let $K:\mathbb{R}^{d+1}\to\mathbb{R}$ be translation-invariant,
$K(\xi,\eta)=K(\xi-\eta)$. 
We require the following: 
\begin{enumerate}
\item[(A1)] $K\in C(\mathbb{R}^{d+1})\cap L^1(\mathbb{R}^{d+1})$ is
positive definite, and the induced reproducing kernel Hilbert space
$\mathcal{H}$ is universal on every compact subset of
$[0,T]\times\mathbb{R}^d$.
\item[(A2)] The Fourier transform $\widehat{K}$ of $K$ satisfies 
\[ 
\int_{\mathbb{R}^{d+1}}(1+|z|^2)\widehat{K}(z)\,dz<\infty. 
\]
\end{enumerate}
By Bochner's theorem, (A1) implies $\widehat{K}\ge 0$, and the
inversion formula
\[
 K(\xi)=(2\pi)^{-(d+1)/2}\int_{\mathbb{R}^{d+1}}e^{iz^{\mathsf{T}}\xi}\widehat{K}(z)\,dz,
 \qquad \xi\in\mathbb{R}^{d+1},
\]
holds with $i=\sqrt{-1}$ the imaginary unit.  Universality on every
compact set is equivalent (for translation-invariant kernels) to
$\mathrm{supp}\,\widehat{K}=\mathbb{R}^{d+1}$, see Sriperumbudur et
al.\ \citep{Sriperumbudur2010}.  We denote the RKHS inner product and norm by
$\langle\cdot,\cdot\rangle$ and $\|\cdot\|$.

We also impose the following regularity condition on the marginal flow:
\begin{enumerate}
\item[(A3)] $p\in C([0,T];L^1(\mathbb{R}^d))\cap L^\infty([0,T]\times\mathbb{R}^d)$,
each $p(t,\cdot)$ is a probability density on $\mathbb{R}^d$, and
$\partial_t p\in L^1((0,T)\times\mathbb{R}^d)$ in the weak sense.
\end{enumerate}
Under (A3), for every $u\in\mathcal{U}$ the \emph{residual}
\begin{equation}\label{eq:residual}
 D^u(s,x):=\partial_s p(s,x)+\mathrm{div}_x\bigl(p(s,x)u(s,x)\bigr)
\end{equation}
belongs to $L^1((0,T)\times\mathbb{R}^d)$, since both summands do
($\partial_s p\in L^1$ by (A3), and $\mathrm{div}_x(pu)\in L^1$
by the definition of $\mathcal{U}$).

The weak continuity equation~\eqref{eq:3.1} is stated for the natural
test class $C_0^\infty$.  The proposition below extends the test class
to the entire RKHS $\mathcal{H}$, so that the residual functional can
later be characterized as an element of $\mathcal{H}$ itself.
\begin{prop}\label{prop:weak-eq}
Assume {\rm (A1)}, {\rm (A2)} and {\rm (A3)}.  Then for any
$u\in \mathcal{U}$ and any
$t\in[0,T]$, the weak continuity equation~\eqref{eq:3.1} holds for all
$\varphi\in C_0^{\infty}([0,T]\times\mathbb{R}^d)$ if and only if
\begin{equation}\label{eq:3.2}
\int_0^t\!\!\int_{\mathbb{R}^d}\!p(s,x)\mathcal{A}^u\varphi\,ds\,dx
 = \int_{\mathbb{R}^d}\!p(t,x)\varphi(t,x)\,dx
 - \int_{\mathbb{R}^d}\!p(0,x)\varphi(0,x)\,dx,
 \quad \varphi\in\mathcal{H}.
\end{equation}
\end{prop}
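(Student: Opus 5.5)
The plan is to show that both conditions are equivalent to the single statement $D^u=0$ a.e.\ on $(0,t)\times\mathbb{R}^d$, where $D^u$ is the residual~\eqref{eq:residual}. For a test function $\varphi$ define the defect functional
\[
 L_t(\varphi):=\int_0^t\!\!\int_{\mathbb{R}^d}p\,\mathcal{A}^u\varphi\,dx\,ds-\int_{\mathbb{R}^d}p(t,x)\varphi(t,x)\,dx+\int_{\mathbb{R}^d}p(0,x)\varphi(0,x)\,dx .
\]
The first step is the integration-by-parts identity
\[
 L_t(\varphi)=-\int_0^t\!\!\int_{\mathbb{R}^d}D^u(s,x)\,\varphi(s,x)\,dx\,ds ,
\]
which I would prove for every $\varphi\in C^1_b([0,T]\times\mathbb{R}^d)$ with $\partial_s\varphi$ and $\nabla_x\varphi$ bounded and continuous. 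Formally it follows from moving $\partial_s$ onto $p$ and $\nabla_x$ onto $pu$.

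To make this rigorous with only weak derivatives, I would use the weak definitions of $\partial_s p$ and $\mathrm{div}_x(pu)$. These are tested against $C_0^\infty((0,T)\times\mathbb{R}^d)$, so I would multiply $\varphi$ by a spatial cutoff $\chi_R(x)=\chi(x/R)$ and by a time cutoff $\theta_\delta(s)$ approximating $\mathbf{1}_{[0,t]}$, and then mollify. The passage $R\to\infty$ uses $D^u\in L^1$, $pu\in L^1$ and $|\nabla\chi_R|\le C/R$. The passage $\delta\to 0$ uses $p\in C([0,T];L^1)$ together with $\varphi\in C_b$. Together these limits turn the term $\int\!\!\int p\,\varphi\,\theta_\delta'$ into the boundary terms $\int p(t)\varphi(t)-\int p(0)\varphi(0)$. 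I expect this approximation step to be the main technical obstacle, particularly the treatment of the endpoints $s=0$ and $s=t$, where the weak time derivative must be matched with the $L^1$-continuity of $p$.

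The second step is to show that $\mathcal{H}\subset C^1_b$, so that the identity applies on $\mathcal{H}$. For a translation-invariant kernel, $\varphi\in\mathcal{H}$ has $\|\varphi\|^2=\int|\widehat\varphi|^2/\widehat K\,dz$ up to constants. By Cauchy--Schwarz,
\[
 \int(1+|z|)|\widehat\varphi(z)|\,dz\le\Bigl(\int(1+|z|)^2\widehat K\,dz\Bigr)^{1/2}\|\varphi\|,
\]
and the right side is finite by (A2). Fourier inversion then gives that $\varphi$ and its first derivatives are bounded and continuous. Hence for every $\varphi$ in $C_0^\infty\cup\mathcal{H}$ we have $L_t(\varphi)=-\int_0^t\!\int D^u\varphi$.

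The third step is to conclude both directions. If \eqref{eq:3.1} holds for all $\varphi\in C_0^\infty$, then $\int_0^t\!\int D^u\varphi=0$ for all such $\varphi$, and since $D^u\in L^1_{\rm loc}$ the fundamental lemma of the calculus of variations gives $D^u=0$ a.e.\ on $(0,t)\times\mathbb{R}^d$; then $L_t\equiv0$ on $\mathcal{H}$ as well. Conversely, suppose \eqref{eq:3.2} holds, and set $f:=D^u\mathbf{1}_{(0,t)}\in L^1(\mathbb{R}^{d+1})$, extended by zero. Since $K$ is bounded and continuous, the kernel mean embedding $\varphi_f:=\int K(\cdot-\eta)f(\eta)\,d\eta$ is a Bochner integral in $\mathcal{H}$. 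Testing \eqref{eq:3.2} with $\varphi=\varphi_f$ and using the reproducing property gives
\[
 0=\int f\varphi_f=\|\varphi_f\|^2=c\int|\widehat f(z)|^2\widehat K(z)\,dz .
\]
Because $\widehat f$ is continuous and $\mathrm{supp}\,\widehat K=\mathbb{R}^{d+1}$ by universality (A1), it follows that $\widehat f\equiv0$, hence $f=0$ a.e. This yields $L_t=0$ on $C_0^\infty$, which completes the equivalence.
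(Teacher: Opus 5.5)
Your proposal is correct and follows essentially the same route as the paper: both conditions are reduced to $D^u=0$ a.e.\ via the identity $L_t(\varphi)=-\int_0^t\!\int D^u\varphi$ on $C_b^1\supset\mathcal{H}$. The forward direction then follows from the fundamental lemma, and the converse from injectivity of convolution with $K$ (via $\mathrm{supp}\,\widehat K=\mathbb{R}^{d+1}$). The differences are minor. You test with the single mean embedding $\varphi_f=K*f$ and obtain $\|\varphi_f\|^2=0$, where the paper tests with each $K(\cdot,y)$ to obtain $K*D^u\equiv 0$. You also write out the cutoff and mollification argument, including the need for $pu\in L^1$ in the spatial cutoff term, and prove $\mathcal{H}\hookrightarrow C_b^1$ directly, both of which the paper covers by a brief remark and a citation to Wendland.
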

\begin{proof}
By (A2) and \citep[Theorem~10.45]{Wendland2005}, every
$\varphi\in\mathcal{H}$ is bounded with bounded space-time partial
derivatives of order one (resp.\ two in the $\sigma>0$ case).
Combined with (A3), the bilinear functional
\begin{equation}\label{eq:Lphi}
 L_t(\varphi):=\int_0^t\!\!\int_{\mathbb{R}^d}p(s,x)\mathcal{A}^u\varphi(s,x)\,ds\,dx
 -\int_{\mathbb{R}^d}p(t,x)\varphi(t,x)\,dx
 +\int_{\mathbb{R}^d}p(0,x)\varphi(0,x)\,dx
\end{equation}
is well-defined for every $\varphi\in C_b^1([0,T]\times\mathbb{R}^d)$.
Indeed, (A3) gives $p\in L^1([0,T]\times\mathbb{R}^d)$, which combined
with the boundedness of $\partial_s\varphi$ yields
$p\,\partial_s\varphi\in L^1$; and the defining property
$u\in L^1([0,T]\times\mathbb{R}^d;\mathbb{R}^d,p\,dt\,dx)$ of
$\mathcal{U}$ combined with the boundedness of $\nabla\varphi$ yields
$p\,u^{\mathsf{T}}\nabla\varphi\in L^1([0,T]\times\mathbb{R}^d)$, since
$\int|p\,u^{\mathsf{T}}\nabla\varphi|\,dt\,dx
\le\|\nabla\varphi\|_\infty\!\int|u|\,p\,dt\,dx<\infty$.
The spatial boundary at infinity vanishes because $p(s,\cdot)\in L^1(\mathbb{R}^d)$
for every $s$ and $\varphi$ is bounded.
Hence $D^u\in L^1((0,t)\times\mathbb{R}^d)$ and the integration by
parts in $s$ and $x$ yields
\begin{equation}\label{eq:Lphi-D}
 L_t(\varphi) \;=\; -\int_0^t\!\!\int_{\mathbb{R}^d}D^u(s,x)\,\varphi(s,x)\,ds\,dx
 \end{equation}
for every $\varphi\in C_b^1([0,T]\times\mathbb{R}^d)$. 

Assume \eqref{eq:3.1}. Then, equivalently, by
\eqref{eq:Lphi-D}, $\int D^u\varphi=0$ for every
$\varphi\in C_0^\infty([0,T]\times\mathbb{R}^d)$, hence $D^u=0$ a.e.
on $(0,t)\times\mathbb{R}^d$.  Substituting back into \eqref{eq:Lphi-D}
gives $L_t(\varphi)=0$ for every $\varphi\in\mathcal{H}\subset C_b^1([0,T]\times\mathbb{R}^d)$, which is exactly \eqref{eq:3.2}.

Conversely, assume \eqref{eq:3.2}, i.e.\ $L_t(\varphi)=0$
for every $\varphi\in\mathcal{H}$.  Extend $D^u\in L^1((0,t)\times\mathbb{R}^d)$
by zero to $\mathbb{R}^{d+1}$.  For any $y\in\mathbb{R}^{d+1}$ the
section $K(\cdot,y)$ belongs to $\mathcal{H}$ by the reproducing
property, so applying \eqref{eq:Lphi-D} with $\varphi=K(\cdot,y)$
gives
\begin{equation}\label{eq:KstarD}
 \int_{\mathbb{R}^{d+1}}K(z-y)D^u(z)\,dz = 0, \quad y\in\mathbb{R}^{d+1}.
\end{equation}
Taking the Fourier transform of \eqref{eq:KstarD}, we get $\widehat{K}\widehat{D^u}=0$ a.e. 
Since $D^u\in L^1(\mathbb{R}^{d+1})$, the function $\widehat{D^u}$ is continuous. This together with
$\mathrm{supp}\,\widehat{K}=\mathbb{R}^{d+1}$ yields $\widehat{D^u}= 0$ on $\mathbb{R}^{d+1}$, hence
$D^u=0$ a.e.  Inserting this into \eqref{eq:Lphi-D} for any
$\varphi\in C_0^\infty([0,T]\times\mathbb{R}^d)$ yields \eqref{eq:3.1}. 
\end{proof}

\begin{rem}
The argument uses $\mathcal{H}$ only through the family
$\{K(\cdot,y):y\in\mathbb{R}^{d+1}\}$.  Universality enters as the
characteristic-kernel property
$\mathrm{supp}\,\widehat{K}=\mathbb{R}^{d+1}$, which makes
convolution by $K$ injective on $L^1(\mathbb{R}^{d+1})$.  The argument extends to the
stochastic case ($\sigma>0$) verbatim with the operator
$\mathcal{A}^u$ of \eqref{eq:Au-sigma} and the Laplacian-augmented
residual; (A2) is then used in full to ensure
$\mathcal{H}\hookrightarrow C_b^2$.
\end{rem}

\subsection{Reproducing-kernel residual and the relaxed problem}
\label{sec:3-residual}

By the reproducing property, the linear functional $L_t$ of
\eqref{eq:Lphi} is itself an element of $\mathcal{H}$.  Concretely,
for any $u\in L^1([0,T]\times\mathbb{R}^d;\mathbb{R}^d,p(t,x)\,dt\,dx)$
and $s\in[0,T]$, define
\begin{align*}
 H_s^u(t,x)&:= \int_0^s \int_{\mathbb{R}^d}p(r,y)\mathcal{A}^u_{r,y}K(t,x, r,y)\,dr\,dy
  - \int_{\mathbb{R}^d}p(s,y)K(t,x, s,y)\,dy \\
 &\quad + \int_{\mathbb{R}^d}p(0,y)K(t,x, 0,y)\,dy,
\end{align*}
where $K(t,x,s,y)=K((t,x) - (s,y))$ and $\mathcal{A}^u_{t,x}=\mathcal{A}^u$ that acts on the variable $(t,x)$.
\begin{thm}\label{thm:Ht-in-H}
Suppose that {\rm (A1)}, {\rm (A2)} and {\rm (A3)} hold. Then
$H_s^u\in\mathcal{H}$ for any $s\in [0,T]$ and
$u\in L^1([0,T]\times\mathbb{R}^d;\mathbb{R}^d, p(t,x)\,dt\,dx)$.
Moreover, $u\in\mathcal{U}_0$ if and only if $H_s^u=0$ for every
$s\in [0,T]$.
\end{thm}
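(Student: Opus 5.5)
The proof has two parts: show that $H_s^u$ is the Riesz representer of the functional $L_s$ from \eqref{eq:Lphi}, and then read off the characterization of $\mathcal{U}_0$ from Proposition~\ref{prop:weak-eq}.

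\textbf{Step 1: $L_s$ is a bounded linear functional on $\mathcal{H}$.} Fix $s\in[0,T]$ and $u\in L^1([0,T]\times\mathbb{R}^d;\mathbb{R}^d,p\,dt\,dx)$. By (A2) and \citep[Theorem~10.45]{Wendland2005}, as already used in the proof of Proposition~\ref{prop:weak-eq}, we have the continuous embedding $\mathcal{H}\hookrightarrow C_b^1$. Concretely, $\|\varphi\|_\infty+\|\partial_t\varphi\|_\infty+\|\nabla_x\varphi\|_\infty\le C\|\varphi\|$. I would check this directly. For instance, $|\partial_j\varphi(\xi)|=|\langle\varphi,\partial_{2,j}K(\cdot,\xi)\rangle|$, and
\[
\|\partial_{2,j}K(\cdot,\xi)\|^2=-\partial_j^2K(0)=(2\pi)^{-(d+1)/2}\int z_j^2\widehat K(z)\,dz,
\]
which is finite by (A2). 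Using $p\in L^1$ and $\int|u|p<\infty$, we then get
\[
|L_s(\varphi)|\le C\Bigl(\|p\|_{L^1}+\int|u|p+2\Bigr)\|\varphi\|.
\]
By the Riesz representation theorem there is a unique $h_s\in\mathcal{H}$ with $L_s(\varphi)=\langle\varphi,h_s\rangle$.

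\textbf{Step 2: identify $h_s$ with $H_s^u$.} Evaluate pointwise: $h_s(t,x)=\langle h_s,K(\cdot,(t,x))\rangle=L_s(K(\cdot,(t,x)))$. By the symmetry $K(\xi-\eta)=K(\eta-\xi)$, this equals $H_s^u(t,x)$ term by term, with $\mathcal{A}^u$ acting on the integration variable $(r,y)$. This step is routine once it is justified that the derivative of the section $K(\cdot,\xi)$ is the pointwise derivative, which holds since $K\in C^1$ under (A2). Hence $H_s^u=h_s\in\mathcal{H}$.

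\textbf{Step 3: the characterization.} If $u\in\mathcal{U}_0$, then $u\in\mathcal{U}$ and \eqref{eq:weak-CE} holds for all $C_b^1$ test functions, in particular for all of $\mathcal{H}\subset C_b^1$. So $L_s\equiv0$ on $\mathcal{H}$, and $H_s^u=0$ for every $s$.

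Conversely, suppose $H_s^u=0$ for all $s$. Then $L_s(\varphi)=0$ for all $\varphi\in\mathcal{H}$ and all $s$. One subtlety is that Proposition~\ref{prop:weak-eq} is stated for $u\in\mathcal{U}$, while here $u$ need only lie in the $L^1$ class. I expect this to be the main obstacle. The definition of $\mathcal{U}_0$ requires $u\in\mathcal{U}$, that is, $\mathrm{div}_x(pu)\in L^1$, and this does not follow automatically from $H_s^u=0$. My plan is to argue distributionally:
\begin{itemize}
\item $L_s\equiv0$ on $\{K(\cdot,y)\}$ means that the distribution $D^u$ restricted to $(0,s)$, convolved with $K$, vanishes.
\item Equivalently, $\widehat{K}\,\widehat{D^u}=0$. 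This is where I would try to upgrade $D^u$ to a tempered distribution of finite order: $p\,u\in L^1$ gives $\mathrm{div}_x(pu)\in W^{-1,1}$, so its Fourier transform is a continuous function of polynomial growth.
\item Since $\operatorname{supp}\widehat K=\mathbb{R}^{d+1}$, this forces $D^u=0$ as a distribution, hence $\mathrm{div}_x(pu)=-\partial_tp\in L^1$ by (A3).
\end{itemize}
This gives $u\in\mathcal{U}$. Proposition~\ref{prop:weak-eq} then yields \eqref{eq:3.1} for each $t$. Finally, extend from $C_0^\infty$ to $C_b^1$ test functions by the cutoff and mollification argument implicit in \eqref{eq:Lphi-D}: with $D^u=0$, one has $L_t(\varphi)=0$ for all $\varphi\in C_b^1$. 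This is exactly \eqref{eq:weak-CE}, so $u\in\mathcal{U}_0$.

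If the distributional upgrade in Step 3 proves delicate, the fallback is to state the equivalence for $u\in\mathcal{U}$. That is the reading consistent with Proposition~\ref{prop:weak-eq}, and in that case the converse direction is immediate from it.
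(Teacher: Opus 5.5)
Your proof is correct, and it reaches membership by a genuinely different route.

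\textbf{Membership.} The paper builds $H_s^u$ piece by piece:
\begin{itemize}
\item It shows $\Phi_t=\int p(t,x)\mathcal{A}^u_{t,x}K(\cdot,t,x)\,dx\in\mathcal{H}$ for a.e.\ $t$, as the $\mathcal{H}$-limit of Monte Carlo averages of kernel sections. This is a Cauchy argument via the Fourier representation and the strong law of large numbers.
\item It handles $\Theta_s$ and $\Theta_0$ the same way.
\item It assembles $\Psi_s=\int_0^s\Phi_r\,dr$ as a Bochner integral, using Pettis measurability, separability and an $L^1([0,T])$ norm bound.
\item Only then does it derive $\langle\varphi,H_s^u\rangle=L_s(\varphi)$.
\end{itemize}
You reverse the order. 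The embedding $\mathcal{H}\hookrightarrow C_b^1$, with constants $K(0)$ and $-\partial_j^2K(0)$, makes $L_s$ bounded. Riesz then supplies a representer, and the reproducing property identifies it pointwise with $H_s^u$. This gives the pairing identity for free, holds for every $s$ with no exceptional time set, and sidesteps all measurability issues. What the paper's construction buys is an explicit Bochner-integral representation of $H_s^u$, which it reuses for \eqref{eq:RKHS-norm-integral} and Proposition~\ref{prop:Ht-norm}. On your route that formula follows instead from $\|H_s^u\|^2=L_s(H_s^u)$, the derivative reproducing property, and Fubini.

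\textbf{Characterization.} Your forward direction reads $L_s\equiv0$ on $\mathcal{H}\subset C_b^1$ directly off the definition of $\mathcal{U}_0$. The paper's Step~(v) instead invokes Proposition~\ref{prop:weak-eq} in both directions. That proposition presupposes $u\in\mathcal{U}$, although the theorem is stated for $u\in L^1(p\,dt\,dx)$, so the paper leaves unaddressed the gap you flagged.

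Your distributional upgrade closes that gap. After integrating by parts in time, $L_s$ is the distribution $-\bigl(\mathbf{1}_{(0,s)}\partial_tp+\mathrm{div}_x(\mathbf{1}_{(0,s)}pu)\bigr)$. Its Fourier transform is continuous with at most linear growth. Since $(1+|z|)\widehat K\in L^1$ by (A2), Fubini makes $y\mapsto L_s(K(\cdot,y))$ the Fourier transform of an $L^1$ function. Its vanishing therefore forces $\widehat K$ times that transform to vanish a.e. The set $\{\widehat K>0\}$ is open (as $K\in L^1$) and dense, so continuity makes the transform vanish identically. This is the injectivity argument of Proposition~\ref{prop:weak-eq}, lifted from $L^1$ functions to first-order distributions.

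Once $u\in\mathcal{U}$ is established, $D^u=0$ a.e.\ together with \eqref{eq:Lphi-D} gives \eqref{eq:weak-CE} for all $C_b^1$ test functions directly. So neither a separate appeal to Proposition~\ref{prop:weak-eq} nor your fallback is needed.
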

\begin{proof}
Step (i). Since $u\in\mathcal{U}$ is in $L^1([0,T]\times\mathbb{R}^d;
p\,dt\,dx)$, by Fubini there exists a Lebesgue-null set
$\mathcal{N}\subset[0,T]$ such that for every $t\in[0,T]\setminus\mathcal{N}$
the section $u(t,\cdot)\in L^1(\mu_t)$, hence $u(t,\cdot)<\infty$
$\mu_t$-almost everywhere.  Fix any such $t\in[0,T]\setminus\mathcal{N}$;
the values of $\Phi_t$ on the null set $\mathcal{N}$ do not affect the
Bochner integral in Step (iii).  We claim
\begin{equation}\label{eq:3.3}
 \Phi_t(\cdot):=\int_{\mathbb{R}^d}p(t,x)\mathcal{A}^u_{t,x}K(\cdot, t,x)\,dx\in\mathcal{H}.
\end{equation}
By Lemma 10.44 and Theorem 10.45 in \citep{Wendland2005}, for any
$(t,x)\in [0,T]\times\mathbb{R}^d$ we have
$\mathcal{A}^u_{t,x}K(\cdot, t,x)\in\mathcal{H}$.  Let
$\{X_t^{(n)}\}_{n=1}^{\infty}$ be i.i.d.\ copies of $X_t\sim\mu_t$
and define the Monte-Carlo approximation
\[
 \psi_n(\xi)=\frac{1}{n}\sum_{j=1}^nL(\xi; X_t^{(j)}), \quad \xi=(s,y),
\]
with $L(\xi; X_t^{(j)})=\mathcal{A}^u_{t,x}K(s,y,t,x)|_{x=X_t^{(j)}}$,
so that $\psi_n\in\mathcal{H}$.  By the strong law of large numbers,
\begin{equation}\label{eq:3.4}
 \int_{\mathbb{R}^d}p(t,x)\mathcal{A}^u_{t,x}K(s,y,t,x)\,dx
 = \mathbb{E}\left[\mathcal{A}^u_{t,x}K(s,y,t,X_t)\right]
 = \lim_{n\to\infty}\psi_n(\xi), \quad\text{a.s.}
\end{equation}
By the reproducing property, for $m,n\in\mathbb{N}$,
\begin{align*}
 \langle \psi_m, \psi_n\rangle
 &= \frac{1}{mn}\sum_{j=1}^m\sum_{\ell=1}^n
      \mathcal{A}^u_{t,x}\mathcal{A}^u_{t,y}K(t,y,t,x)
      \bigr|_{y=X_t^{(j)}, x=X_t^{(\ell)}}.
\end{align*}
Denote $i=\sqrt{-1}$. For $\xi=(s,y), \eta=(t,x)\in\mathbb{R}^{d+1}$,
the Fourier inversion formula
\[
 K(\xi-\eta)=(2\pi)^{-(d+1)/2}\int_{\mathbb{R}^{d+1}}e^{iz^{\mathsf{T}}(\xi-\eta)}\widehat{K}(z)\,dz
\]
combined with (A2) allows the differentiations $\mathcal{A}^u_\eta$
and $\mathcal{A}^u_\xi$ to commute with the Fourier integral.  Setting
$\xi^{(j)}=(t,X_t^{(j)})$, we have 
\[
\langle \psi_m,\psi_n\rangle = (2\pi)^{-(d+1)/2}\int_{\mathbb{R}^{d+1}}
 \Biggl(\frac{1}{m}\sum_{j=1}^m\mathcal{A}^u_\xi e^{iz^{\mathsf{T}}\xi^{(j)}}\Biggr)
 \overline{\Biggl(\frac{1}{n}\sum_{\ell=1}^n\mathcal{A}^u_\eta e^{iz^{\mathsf{T}}\xi^{(\ell)}}\Biggr)}
 \,\widehat{K}(z)\,dz.
\]
The second-moment hypothesis (A2) gives an integrable dominating
function (the integrand is of order $|z|^2\,\widehat K(z)$), so by the
dominated convergence theorem
\[
 \lim_{m,n\to\infty}\langle\psi_m,\psi_n\rangle
 = (2\pi)^{-(d+1)/2}\int_{\mathbb{R}^{d+1}}\bigl|\mathbb{E}[\mathcal{A}^u_\xi e^{iz^{\mathsf{T}}\Xi}]\bigr|^2\widehat{K}(z)\,dz
 = \mathbb{E}\bigl[\mathcal{A}^u_{\xi}\mathcal{A}^u_{\eta}K(\Xi,\tilde\Xi)\bigr],
\]
with $\Xi=(t,X_t)$ and $\tilde\Xi=(t,\tilde X_t)$ independent copies.
Hence $\{\psi_n\}$ is Cauchy in $\mathcal{H}$ and converges to some
$\Phi_t^\ast\in\mathcal{H}$.  Using the reproducing property and
\eqref{eq:3.4},
\[
 \Phi_t^\ast(\xi) = \langle\Phi_t^\ast,K(\cdot,\xi)\rangle
 = \lim_n\langle\psi_n,K(\cdot,\xi)\rangle
 = \lim_n\psi_n(\xi) = \Phi_t(\xi), \quad \xi\in\mathbb{R}^{d+1},
\]
so $\Phi_t=\Phi_t^\ast\in\mathcal{H}$, establishing \eqref{eq:3.3}.

Step (ii). For any $s\in[0,T]$ we claim
\begin{equation}\label{eq:Omega-in-H}
 \Theta_s(\cdot):=\int_{\mathbb{R}^d}p(s,y)K(\cdot,s,y)\,dy\in\mathcal{H}.
\end{equation}
The argument is the same as Step (i) with $\mathcal{A}^u_{t,x}$
suppressed. Take an i.i.d. sequence $\{Y_s^{(j)}\}_{j=1}^{\infty}$ with $Y_s^{(1)}\sim \mu_s$ and set
$\theta_n(\xi):= (1/n)\sum_{j=1}^n K(\xi,s,Y_s^{(j)})$. Then, 
\[
 \langle\theta_m,\theta_n\rangle
 = \frac{1}{mn}\sum_{j=1}^m\sum_{\ell=1}^n K\bigl((s,Y_s^{(j)}),(s,Y_s^{(\ell)})\bigr)
 \longrightarrow \mathbb{E}\bigl[K((s,Y_s),(s,\tilde Y_s))\bigr],
\]
with $Y_s,\tilde Y_s$ independent copies of $\mu_s$. Hence $\{\theta_n\}$ is
Cauchy with limit $\Theta_s\in\mathcal{H}$.

Step (iii). We claim
\begin{equation}\label{eq:Psi-in-H}
 \Psi_s(\cdot):=\int_0^s\int_{\mathbb{R}^d}p(r,y)\mathcal{A}^u_{r,y}K(\cdot,r,y)\,dy\,dr
 = \int_0^s\Phi_r\,dr \in\mathcal{H}.
\end{equation}
By Step (i), $\Phi_r\in\mathcal{H}$ for a.e.\ $r\in[0,T]$.  Since
$\mathcal{H}$ is separable under (A1) (the kernel $K$ is
translation-invariant with $\widehat K\in L^1$, so $\mathcal{H}$
embeds into a separable space of continuous functions), the
weak measurability of $r\mapsto\Phi_r$---which follows from the
measurability of each evaluation $r\mapsto\langle\varphi,\Phi_r\rangle
=\int p(r,y)\mathcal{A}^u_{r,y}\varphi(r,y)\,dy$ for $\varphi\in\mathcal{H}$---implies
strong measurability by Pettis' theorem.  For the norm estimate, write
\[
 \|\Phi_r\|_{\mathcal{H}}
 \;\le\;\int_{\mathbb{R}^d}p(r,y)\,\bigl\|\mathcal{A}^u_{r,y}K(\cdot,r,y)\bigr\|_{\mathcal{H}}\,dy
 \;\le\;C\!\int_{\mathbb{R}^d}p(r,y)\bigl(1+|u(r,y)|\bigr)dy,
\]
where $C$ depends only on $\int(1+|z|)\widehat K(z)\,dz<\infty$ via
(A2) and the differential reproducing identity; the finiteness of
this first-moment integral follows from (A2) by Cauchy--Schwarz,
$\int(1+|z|)\widehat K(z)\,dz\le \bigl(\int(1+|z|^2)\widehat K(z)\,dz\bigr)^{1/2}\bigl(\int\widehat K(z)\,dz\bigr)^{1/2}<\infty$.  By (A3) and
$u\in L^1(p\,dt\,dx)$ this bound is in $L^1([0,T])$, so
$\int_0^s\|\Phi_r\|\,dr<\infty$.  Therefore the Bochner integral
$\int_0^s\Phi_r\,dr$ defines an element of $\mathcal{H}$, and its
evaluation at any $\xi$ agrees with $\Psi_s(\xi)$ by the reproducing
property applied under the integral sign.  Combining Steps (i)--(iii), we get 
\[
 H_s^u = \Psi_s - \Theta_s + \Theta_0 \in\mathcal{H}.
\]

Step (iv). For every $\varphi\in\mathcal{H}$,
\begin{equation}\label{eq:Hs-pairing}
 \langle\varphi,H_s^u\rangle
 = \int_0^s\!\!\int_{\mathbb{R}^d}p(r,y)\mathcal{A}^u_{r,y}\varphi(r,y)\,dy\,dr
 - \int_{\mathbb{R}^d}p(s,y)\varphi(s,y)\,dy
 + \int_{\mathbb{R}^d}p(0,y)\varphi(0,y)\,dy.
\end{equation}
The three summands on the right-hand side are treated similarly.
Since $\langle\varphi,\cdot\rangle$ is a bounded linear functional on
$\mathcal{H}$, it commutes with Bochner integration of any integrable
$\mathcal{H}$-valued map
\citep[Theorem~3.7]{Hytonen2016}.
Applied to the map $y\mapsto p(s,y)K(\cdot,s,y)$, whose Bochner
integral is $\Theta_s\in\mathcal{H}$ by Step (ii), and using the
reproducing property
$\langle\varphi,K(\cdot,s,y)\rangle=\varphi(s,y)$, we obtain
\[
 \langle\varphi,\Theta_s\rangle
 = \Bigl\langle\varphi,\int p(s,y)K(\cdot,s,y)\,dy\Bigr\rangle
 = \int p(s,y)\langle\varphi,K(\cdot,s,y)\rangle\,dy
 = \int p(s,y)\varphi(s,y)\,dy.
\]
For the Stein-operator term, the differential form of the reproducing
property \citep[Theorem~10.45]{Wendland2005}
$\langle\varphi,\mathcal{A}^u_{r,y}K(\cdot,r,y)\rangle=\mathcal{A}^u_{r,y}\varphi(r,y)$
together with commutation of $\langle\varphi,\cdot\rangle$ with the
Bochner integral gives
\[
 \langle\varphi,\Psi_s\rangle
 = \int_0^s\!\!\int p(r,y)\langle\varphi,\mathcal{A}^u_{r,y}K(\cdot,r,y)\rangle\,dy\,dr
 = \int_0^s\!\!\int p(r,y)\mathcal{A}^u_{r,y}\varphi(r,y)\,dy\,dr.
\]
The Bochner integrability required in each step follows from the norm
estimates in Steps (i)--(iii), which are uniform on $[0,s]$ under
(A2) and (A3).  Subtracting the $\Theta_s$ term and adding the
$\Theta_0$ term gives \eqref{eq:Hs-pairing}.

Step (v). The right-hand side of \eqref{eq:Hs-pairing} is
precisely the functional $L_s(\varphi)$ of \eqref{eq:Lphi}, which
characterizes $u\in\mathcal{U}_0$ via \eqref{eq:3.2} of
Proposition~\ref{prop:weak-eq}: $u\in\mathcal{U}_0$ if and only if
$L_s(\varphi)=0$ for every $\varphi\in\mathcal{H}$ and every
$s\in[0,T]$.  By \eqref{eq:Hs-pairing} this is equivalent to
$\langle\varphi,H_s^u\rangle=0$ for every $\varphi\in\mathcal{H}$ and
every $s$, which, since $\mathcal{H}$ is a Hilbert space, is
equivalent to $H_s^u=0$ for every $s\in[0,T]$.
\end{proof}

Since $u\in\mathcal{U}_0$ is equivalent to $\|H_t^u\|=0$ for every
$t\in[0,T]$, we replace the hard constraint of $(P)$ by a quadratic
penalty and consider the minimization problem
\begin{equation}
\tag{$P_{\lambda}$}\label{eq:Plambda}
 \inf_{u\in\mathcal{U}} J_{\lambda}(u)
\end{equation}
with 
\[
 J_{\lambda}(u)=\int_0^T\int_{\mathbb{R}^d}|u(t,x)|^2p(t,x)\,dx\,dt + \lambda\int_0^T\|H_t^u\|^2\,dt.
\]
The weight $\lambda>0$ trades off kinetic energy against admissibility.
The subsequent subsections derive a sample-based estimator of the
penalty $\int_0^T\|H_t^u\|^2\,dt$.

Let $\{\varepsilon_n\}_{n=1}^{\infty}$ and
$\{\lambda_n\}_{n=1}^{\infty}$ be positive sequences satisfying
$\lim_{n\to\infty}\varepsilon_n=0$ and $\lim_{n\to\infty}\lambda_n=+\infty$,
respectively, and take $u_n\in\mathcal{U}$ to be an
$\varepsilon_n$-optimal solution for \eqref{eq:Plambda} with
$\lambda=\lambda_n$:
\begin{equation}\label{eq:eps-opt}
 J_{\lambda_n}(u_n)\le \inf_{u\in\mathcal{U}}J_{\lambda_n}(u) + \varepsilon_n.
\end{equation}
\begin{rem}[Relation to the structural theory]
\label{rem:no-structure}
Under confining assumptions on the marginal flow, the deterministic
theory of \citet[Theorem~2.4]{Mikami2000} and
\citet[Theorem~3.8]{Mikami2021} gives existence and uniqueness of the
minimizer together with the gradient structure
$u^\ast=\nabla_x\psi$.  Theorem~\ref{thm:convergence} uses none of
this.  Existence and uniqueness are part of its conclusion, by the
convexity argument in Step~(iv) of the proof, and $u^\ast$ itself
serves as the test element there, so the statement is free of any
confining assumption.  Uniqueness holds up to $p\,dt\,dx$-null sets,
the natural equivalence for~\eqref{eq:P0} because the kinetic energy
integrand vanishes on $\{p=0\}$.
\end{rem}

\begin{thm}\label{thm:convergence}
Suppose that {\rm (A1)}--{\rm (A3)} hold and $V_0(\mu)<\infty$.  Then
\begin{align}
 &\lim_{n\to\infty}\lambda_n\int_0^T\|H_t^{u_n}\|^2\,dt = 0, \label{eq:conv-H}\\
 &\lim_{n\to\infty}J_{\lambda_n}(u_n) = V_0(\mu), \label{eq:conv-J}\\
 &\sup_{\lambda>0}\inf_{u\in\mathcal{U}}J_\lambda(u) = V_0(\mu). \label{eq:sup-inf}
\end{align}
Moreover \eqref{eq:P0} admits a minimizer $u^\ast\in\mathcal{U}_0$,
unique up to $p(t,x)\,dt\,dx$-null sets, and
\begin{equation}\label{eq:conv-u-L2}
 u_n \longrightarrow u^{\ast}\quad\text{in }L^2([0,T]\times\mathbb{R}^d; \mathbb{R}^d, p(t,x)\,dt\,dx),
 \quad\text{as }n\to\infty.
\end{equation}
\end{thm}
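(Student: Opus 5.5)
The argument is a standard penalty scheme, and Theorem~\ref{thm:Ht-in-H} supplies the link between feasibility and the penalty. Write $E(u)=\int|u|^2p\,dt\,dx$ and $R(u)=\int_0^T\|H_t^u\|^2dt$, so that $J_\lambda=E+\lambda R$. By Theorem~\ref{thm:Ht-in-H}, $R(u)=0$ exactly when $u\in\mathcal{U}_0$, so $J_\lambda(u)=E(u)$ on $\mathcal{U}_0$. This gives the upper bound $\inf_{\mathcal{U}}J_\lambda\le V_0(\mu)$ for every $\lambda$. Consequently $J_{\lambda_n}(u_n)\le V_0(\mu)+\varepsilon_n$, and both $E(u_n)$ and $\lambda_nR(u_n)$ are bounded. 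In particular $R(u_n)\le (V_0+\varepsilon_n)/\lambda_n\to0$, and $\{u_n\}$ is bounded in $L^2(p\,dt\,dx)$.

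For the lower bound, I extract a subsequence with $u_n\rightharpoonup \bar u$ weakly in $L^2(p\,dt\,dx)$. The map $u\mapsto H_t^u$ is affine, with linear part $u\mapsto \int_0^t\!\int p\,u^{\mathsf T}\nabla_yK(\cdot,r,y)$. For fixed $\varphi\in\mathcal{H}$, the pairing $\langle\varphi,H_t^u\rangle=L_t(\varphi)$ from \eqref{eq:Hs-pairing} is a continuous affine functional of $u\in L^2(p)$, because $\nabla\varphi$ is bounded and $p\in L^1$ (so $L^2(p)\subset L^1(p)$). Hence $H_t^{u_n}\rightharpoonup H_t^{\bar u}$ weakly in $\mathcal{H}$ for each $t$. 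Weak lower semicontinuity of the norm together with Fatou in $t$ gives $R(\bar u)\le\liminf R(u_n)=0$, so $R(\bar u)=0$. One technical point needs care here: Theorem~\ref{thm:Ht-in-H} characterizes $\mathcal{U}_0$ inside $\mathcal{U}$, and membership of $\bar u$ in $\mathcal{U}$ requires $\mathrm{div}(p\bar u)\in L^1$. I would handle this by noting that $H_t^{\bar u}=0$ for all $t$ forces the distributional identity $\partial_tp+\mathrm{div}(p\bar u)=0$. Then $\mathrm{div}(p\bar u)=-\partial_tp\in L^1$ by (A3), so $\bar u\in\mathcal{U}$ and hence $\bar u\in\mathcal{U}_0$. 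Weak lower semicontinuity of $E$ now gives
\[
V_0(\mu)\le E(\bar u)\le\liminf E(u_n)\le\liminf J_{\lambda_n}(u_n)\le\limsup J_{\lambda_n}(u_n)\le V_0(\mu).
\]
This proves \eqref{eq:conv-J}, shows that $\bar u$ is a minimizer (existence), and yields $E(u_n)\to V_0=E(\bar u)$ along the subsequence. It also gives $\lambda_nR(u_n)=J_{\lambda_n}(u_n)-E(u_n)\to0$, which is \eqref{eq:conv-H}.

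Uniqueness follows from strict convexity of $E$ on the convex set $\mathcal{U}_0$, which is convex because the constraint is affine in $u$. If $u^1,u^2$ are two minimizers, then $E\bigl(\tfrac{u^1+u^2}2\bigr)=V_0-\tfrac14\|u^1-u^2\|^2_{L^2(p)}$, which forces $u^1=u^2$ $p\,dt\,dx$-a.e. Since every subsequence has a further subsequence converging weakly to the same $u^\ast$, the whole sequence converges weakly. Weak convergence combined with convergence of the norms $E(u_n)\to E(u^\ast)$ upgrades this to strong convergence in the Hilbert space $L^2(p)$, giving \eqref{eq:conv-u-L2}. Finally, \eqref{eq:sup-inf} follows because $\inf J_\lambda$ is nondecreasing in $\lambda$ and bounded by $V_0$. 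Along $\lambda_n\to\infty$ with $\varepsilon_n\to0$ we have $\inf J_{\lambda_n}\ge J_{\lambda_n}(u_n)-\varepsilon_n\to V_0$, so the supremum equals $V_0$.

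I expect the main obstacle to be the closedness step: showing that the weak limit $\bar u$ is admissible. This has two parts. The first is the weak continuity of $u\mapsto H_t^u$ (pointwise in $t$), which must be coupled with Fatou over $t$; this needs the bound $\|\Phi_r\|\le C\int p(1+|u|)$ from Step (iii). The second is recovering the divergence-integrability requirement of $\mathcal{U}$ for the limit. For the latter, I would first run the argument of Proposition~\ref{prop:weak-eq} in distributional form, testing against $C_0^\infty$ functions, for which no $L^1$ hypothesis on $\mathrm{div}(p\bar u)$ is needed, and only afterwards invoke (A3).
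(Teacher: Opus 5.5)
Your argument is correct. It shares the paper's skeleton: feasibility upper bound, weak compactness in $L^2(p\,dt\,dx)$, closedness of the constraint, lower semicontinuity, strict convexity, and weak plus norm convergence. Where you differ is in the penalty part.

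\textbf{Deriving \eqref{eq:conv-H}.} The paper proves \eqref{eq:conv-H} first, by a standalone contradiction argument (Step~(i) of Appendix~\ref{app:proof-convergence}) that compares $\varepsilon$-optimal solutions at two widely separated penalty levels. It then uses \eqref{eq:conv-H} to obtain $\mathcal{J}_n\to0$ and, at the end of Step~(ii), \eqref{eq:conv-J}. You instead read $R(u_n)\le(V_0(\mu)+\varepsilon_n)/\lambda_n\to0$ directly off the upper bound. You then get \eqref{eq:conv-H} as a by-product, since $0\le\lambda_nR(u_n)\le V_0(\mu)+\varepsilon_n-E(u_n)$ once $\liminf E(u_n)\ge V_0(\mu)$ is known, so Step~(i) becomes redundant. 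The paper's route has the merit that \eqref{eq:conv-H} holds for any bounded $\varepsilon_n$-optimal penalty sequence, with no compactness and no identification of limits. Yours is more economical, because the closedness argument is needed anyway. Your bound $\inf_{\mathcal U}J_\lambda\le V_0(\mu)$, taken over arbitrary feasible drifts, also avoids the paper's appeal to $u^\ast$ in \eqref{eq:Jbound} before its existence has been established.

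\textbf{Closedness.} The paper passes to the limit in $L_t(\varphi;\cdot)$ and then extends from $\mathcal{H}$ to $C_0^\infty$ by universality. That extension needs control of $\nabla_x\varphi$, which sup-norm density on compacts does not supply by itself. Your plan is to rerun the Fourier argument of Proposition~\ref{prop:weak-eq} with the residual written as $f+\mathrm{div}_x g$, where $f,g\in L^1$, so its Fourier transform is still continuous. This is the sounder of the two routes.

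\textbf{Two points to tidy.} First, Fatou gives $H_t^{\bar u}=0$ only for a.e.\ $t$. Before you write ``for all $t$'', invoke the continuity of $t\mapsto H_t^{\bar u}$ in $\mathcal{H}$, which follows from Step~(iii) of Theorem~\ref{thm:Ht-in-H} and $p\in C([0,T];L^1)$. Alternatively, note that a.e.\ $t$ close to $T$ already suffices for the distributional identity, since that identity is local. Second, your chain of inequalities runs along an extracted subsequence. State the subsequence principle explicitly when you conclude \eqref{eq:conv-J} and \eqref{eq:conv-H} for the whole sequence.
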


\begin{rem}[Scope of Theorem~\ref{thm:convergence}]
\label{rem:scope}
Theorem~\ref{thm:convergence} applies to each synthetic experiment of
Section~\ref{sec:numerical}, Exps~1--5.  The Gaussian kernel satisfies
(A1) and (A2), and each marginal family there is generated by pushing
$\mu_0$ forward along a bounded velocity field known in closed form,
so the family is smooth with integrable time derivative, which is
(A3), and the generating field is feasible for \eqref{eq:P0} with
finite kinetic energy, so $V_0(\mu)<\infty$.  For the real data of
Exp~6 the hypotheses on the marginal flow cannot be checked, and the
theorem is not claimed there.  Exp~7 (Nelson, $\sigma=1$)
is presented in Section~\ref{sec:exp-stochastic} as a computational
demonstration that the estimator framework extends to $\sigma>0$.  The
convergence theory of Theorem~\ref{thm:convergence} is stated for
$\sigma=0$ and does \emph{not} apply there.
\end{rem}

\begin{proof}[Proof sketch]
The proof follows the convergence analysis for kernel-based Schr{\"o}dinger bridges in \citet{nak:2024sb}, 
modified here to accommodate constraints imposed at every time point rather than solely at the two endpoints. 
The argument proceeds in four steps.  Step~(i) rules out
$\limsup_n\lambda_n\mathcal{J}_n>0$ by a subsequence argument.
Step~(ii) extracts a weak limit $u_\infty$ of $\{u_n\}$ in
$L^2(p\,dt\,dx)$, identifies its residual as the $L^1$ function
$-\partial_t p$, and concludes $u_\infty\in\mathcal{U}_0$ together with
$\mathcal{E}_n\to V_0(\mu)$.  Step~(iii) derives the sup-inf identity~\eqref{eq:sup-inf} from
\eqref{eq:conv-J} and the $\varepsilon_n$-optimality of $u_n$.  Step~(iv) establishes existence and uniqueness of the
minimizer by convexity and upgrades weak to strong convergence using
$u^\ast$ as its own test element.  The full proof is in
Appendix~\ref{app:proof-convergence}.
\end{proof}

\begin{rem}
Step (iv) works with the weak convergence of $\{u_n\}$ in
$L^2(p\,dt\,dx)$ obtained in Step (ii), rather than with the RKHS
pairing $\langle\varphi,H_T^{u_n}\rangle$.  The argument needs a test
element of $L^2(p\,dt\,dx)$, and $u^\ast$ belongs to that space
because $V_0(\mu)<\infty$.  The vanishing penalty
$\|H_t^{u_n}\|\to 0$ enters earlier, in Step (ii), where it places
every weak limit of $\{u_n\}$ in $\mathcal{U}_0$.
\end{rem}

\subsection{Representations of the penalty term}
\label{sec:penalty-expansion}

We derive a computable expression for the penalty $\int_0^T\|H_t^u\|^2\,dt$ appearing in \eqref{eq:Plambda}.
The reproducing property of $\mathcal{H}$ states
\[
 \mathcal{A}^u_{t,x}\varphi(t,x) = \langle \varphi,\, \mathcal{A}^u_{t,x}K(\cdot,(t,x))\rangle, \quad \varphi\in\mathcal{H}.
\]
Using this, the Bochner-integral identity for Hilbert-valued
integrands~\citep[Proposition~1.2.4]{Hytonen2016} together with the
Bochner integrability established in Step~(iii) of the proof of
Theorem~\ref{thm:Ht-in-H} yield, for any Borel measurable $Q\subset [0,T]\times\mathbb{R}^d$,
\begin{equation}\label{eq:RKHS-norm-integral}
 \Bigl\|\int_Q p(y)\mathcal{A}^u_y K(\cdot,y)\,dy\Bigr\|^2
 = \int_{Q\times Q}p(y)p(y')\,\mathcal{A}^u_y\mathcal{A}^u_{y'}K(y,y')\,dy'\,dy.
\end{equation}

\begin{prop}\label{prop:Ht-norm}
Let $(X_t)_{0\le t\le T}$ and $(\tilde{X}_t)_{0\le t\le T}$ be independent continuous processes such that $X_t,\tilde{X}_t\sim\mu_t$ for any $t\in [0,T]$.
Then, under {\rm (A1)}, {\rm (A2)} and {\rm (A3)} we have
\begin{equation}\label{eq:Ht-six-terms}
 \|H_t^u\|^2 \;=\; I_1(t) + I_2(t) + I_3(t) - 2\,I_4(t) - 2\,I_5(t) + 2\,I_6(t), \quad 0\le t\le T, 
\end{equation}
where 
\begin{equation}\label{eq:Ik-defs}
\begin{aligned}
 I_1(t) &= \int_0^t\!\!\int_0^t\mathbb{E}\bigl[\mathcal{A}^u_{s}\mathcal{A}^u_{r}K(s,X_s,r,\tilde{X}_r)\bigr]\,ds\,dr,
 & I_2(t) &= \mathbb{E}\bigl[K(t,X_t,t,\tilde{X}_t)\bigr],\\
 I_3(t) &= \mathbb{E}\bigl[K(0,X_0,0,\tilde{X}_0)\bigr],
 & I_4(t) &= \int_0^t\!\mathbb{E}\bigl[\mathcal{A}^u_{s}K(t,X_t,s,\tilde{X}_s)\bigr]\,ds,\\
 I_5(t) &= \mathbb{E}\bigl[K(0,X_0,t,\tilde{X}_t)\bigr],
 & I_6(t) &= \int_0^t\!\mathbb{E}\bigl[\mathcal{A}^u_{s}K(0,X_0,s,\tilde{X}_s)\bigr]\,ds.
\end{aligned}
\end{equation}
\end{prop}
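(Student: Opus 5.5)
The plan is to start from the decomposition $H_t^u=\Psi_t-\Theta_t+\Theta_0$ obtained in Steps (i)--(iii) of the proof of Theorem~\ref{thm:Ht-in-H}. Here $\Psi_t=\int_0^t\Phi_r\,dr$, $\Phi_r=\int p(r,y)\mathcal{A}^u_{r,y}K(\cdot,r,y)\,dy$ and $\Theta_s=\int p(s,y)K(\cdot,s,y)\,dy$, all in $\mathcal{H}$. Expanding the square gives
\[
\|H_t^u\|^2=\|\Psi_t\|^2+\|\Theta_t\|^2+\|\Theta_0\|^2-2\langle\Psi_t,\Theta_t\rangle+2\langle\Psi_t,\Theta_0\rangle-2\langle\Theta_t,\Theta_0\rangle .
\]
Each of the six inner products is then matched with one of $I_1,\dots,I_6$: $\|\Psi_t\|^2\leftrightarrow I_1$, $\|\Theta_t\|^2\leftrightarrow I_2$, $\|\Theta_0\|^2\leftrightarrow I_3$, $\langle\Psi_t,\Theta_t\rangle\leftrightarrow I_4$, $\langle\Theta_0,\Theta_t\rangle\leftrightarrow I_5$, and $\langle\Psi_t,\Theta_0\rangle\leftrightarrow I_6$. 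The signs are exactly those in \eqref{eq:Ht-six-terms}.

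For the pure terms, apply \eqref{eq:RKHS-norm-integral} with $Q=[0,t]\times\mathbb{R}^d$ to get
\[
\|\Psi_t\|^2=\int_0^t\!\!\int_0^t\!\!\iint p(s,x)p(r,y)\,\mathcal{A}^u_{s,x}\mathcal{A}^u_{r,y}K(s,x,r,y)\,dx\,dy\,ds\,dr .
\]
Because $X$ and $\tilde X$ are independent with $X_s\sim\mu_s$ and $\tilde X_r\sim\mu_r$, the inner $dx\,dy$ integral equals $\mathbb{E}[\mathcal{A}^u_s\mathcal{A}^u_rK(s,X_s,r,\tilde X_r)]$, which gives $I_1$. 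Only the one-dimensional laws of $X$ and $\tilde X$ and their independence are used; continuity of the paths serves only to make the joint measurability in $s$ harmless.

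The same argument without $\mathcal{A}^u$ (Step (ii) of the proof of Theorem~\ref{thm:Ht-in-H}) gives $\|\Theta_t\|^2=\iint p(t,x)p(t,y)K(t,x,t,y)\,dx\,dy=I_2$, and likewise $\|\Theta_0\|^2=I_3$.

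The cross terms use the commutation of $\langle\varphi,\cdot\rangle$ with the Bochner integral, as in Step (iv) of that proof, together with the pairing formula \eqref{eq:Hs-pairing}:
\[
\langle\Psi_t,\varphi\rangle=\int_0^t\!\!\int p(r,y)\,\mathcal{A}^u_{r,y}\varphi(r,y)\,dy\,dr,\qquad \varphi\in\mathcal{H}.
\]
Taking $\varphi=\Theta_t$ and inserting $\Theta_t(r,y)=\int p(t,x)K(r,y,t,x)\,dx$ requires moving $\mathcal{A}^u_{r,y}$ under the $dx$ integral. This is justified because, by (A2), derivatives of $K$ up to first order are bounded and $p(t,\cdot)$ is a density, so dominated differentiation applies. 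The result is $\mathbb{E}[\mathcal{A}^u_sK(t,X_t,s,\tilde X_s)]$ integrated over $s\in[0,t]$, i.e.\ $I_4$, using the symmetry $K(\xi,\eta)=K(\eta,\xi)$ of the real even kernel to order the arguments. The same computation with $\Theta_0$ gives $I_6$, and $\langle\Theta_t,\Theta_0\rangle=\Theta_0(\cdot)$ integrated against $p(t,\cdot)$ gives $I_5$.

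The main obstacle is justifying the Fubini and differentiation interchanges, which needs integrability of the integrands.

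\emph{The $I_1$ integrand.} It involves $|u(s,x)||u(r,y)|$ times second derivatives of $K$, which are bounded because $\int|z|^2\widehat K<\infty$. It is therefore dominated by $C(1+|u(s,x)|)(1+|u(r,y)|)p(s,x)p(r,y)$. This function is integrable on $([0,t]\times\mathbb{R}^d)^2$ since it factorizes and $u\in L^1(p\,dt\,dx)$. This is precisely the hypothesis under which \eqref{eq:RKHS-norm-integral} was established.

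\emph{The other terms.} The integrands of $I_4$ and $I_6$ are dominated by $C(1+|u|)p$, and those of $I_2$, $I_3$, $I_5$ are bounded by $\|K\|_\infty$. Hence every expectation is finite, and all rearrangements are justified by Fubini--Tonelli.
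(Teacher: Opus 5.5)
Your proposal is correct and is essentially the paper's own argument in Appendix~\ref{app:penalty-expansion}: the same split $H_t^u=\Psi_t-\Theta_t+\Theta_0$ (the paper's $A_1+A_2+A_3$), the same six-term expansion with $\|\Psi_t\|^2=I_1$ obtained from \eqref{eq:RKHS-norm-integral}, and the same reproducing-property evaluation of the cross terms. The only differences are cosmetic. For the cross terms you pair $\Psi_t$ with $\Theta_t$ via \eqref{eq:Hs-pairing}, which requires differentiating $\Theta_t$ under the $dx$ integral, whereas the paper evaluates $A_1$ at $(t,x')$ and integrates against $p(t,x')$. You also write out the Fubini and domination bounds that the paper leaves implicit.
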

It is straightforward to see that $H_t^u$ decomposes into an interior
term and two boundary terms, and that expanding the squared norm
yields six cross-terms, three of which are independent of $u$.  
The full calculation is provided in Appendix~\ref{app:penalty-expansion}.


Hereafter we further assume that $K(y,y')=\phi(|y-y'|)$ for some even $\phi\in C^2(\mathbb{R})$.
Since $\phi'$ is odd, $\phi'(0)=0$, and we define
\[
 \phi_1(r):=\frac{\phi'(r)}{r}, \qquad
 \phi_2(r):=\frac{\phi_1'(r)}{r}.
\]
Both $\phi_1$ and $\phi_2$ extend continuously to $r=0$.
With $\sigma=0$, the operator is $\mathcal{A}^u=\partial_t + u^{\mathsf{T}}\nabla_x$, and we write $\tilde{u}(y)=(1,u_1(y),\ldots,u_d(y))^{\mathsf{T}}$ for $y=(t,x)$. Then
\begin{equation}\label{eq:AK-sigma0}
 \mathcal{A}^u_y K(y',y) = \phi_1(|y-y'|)\,\tilde{u}(y)^{\mathsf{T}}(y-y'),
\end{equation}
and
\begin{equation}\label{eq:AA-sigma0}
 \mathcal{A}^u_y\mathcal{A}^u_{y'}K(y,y')
 = -\phi_2(|y-y'|)\,(y-y')^{\mathsf{T}}\tilde{u}(y)\,\tilde{u}(y')^{\mathsf{T}}(y-y')
   -\phi_1(|y-y'|)\,\tilde{u}(y)^{\mathsf{T}}\tilde{u}(y').
\end{equation}

\begin{rem}
In the stochastic extension ($\sigma>0$, Section~\ref{sec:stochastic}),
the operator $\mathcal{A}^u$ acquires a Laplacian term and
\eqref{eq:AA-sigma0} gains additional terms involving
$\phi_3(r):=\phi_2'(r)/r$, $\phi_4(r):=\phi_3'(r)/r$, and powers of $\sigma^2$.
\end{rem}

\subsection{Sample estimator of the penalty}
\label{sec:estimator}
\label{sec:U-statistic}

We construct a consistent (Monte Carlo) estimator of $\int_0^T\|H_t^u\|^2\,dt$ for the approximate problem.
The quantities $I_2(t), I_3(t), I_5(t)$ in \eqref{eq:Ik-defs} do not
depend on $u$ and are therefore dropped when optimizing over~$u$.
The remaining $u$-dependent part is
\begin{equation}\label{eq:Jhat-population}
 \mathcal{Q}(u) \;:=\; \int_0^T\bigl[\,I_1(t) - 2\,I_4(t) + 2\,I_6(t)\,\bigr]\,dt.
\end{equation}
By the change of the order of integration, the bulk contribution $\int_0^T I_1(t)\,dt$ is given by 
\[
 \int_0^T I_1(t)\,dt = \int_0^T\!\!\int_0^t\!\!\int_0^t \mathbb{E}[\mathcal{A}^u_s\mathcal{A}^u_r K]\,ds\,dr\,dt
 = \int_0^T\!\!\int_0^T w(s,r)\,\mathbb{E}[\mathcal{A}^u_s\mathcal{A}^u_r K(s,X_s,r,\tilde{X}_r)]\,ds\,dr,
\]
where $w(s,r):=T - s\vee r$.

Similarly, the boundary contribution $\int_0^T\bigl[-2 I_4(t)+2 I_6(t)\bigr]dt$ is given by 
\begin{align*}
 \int_0^T\bigl[-2 I_4(t)+2 I_6(t)\bigr]dt &= 
  \int_0^T\Bigl[-2\int_0^t\mathbb{E}[\mathcal{A}^u_s K(t,X_t,s,\tilde{X}_s)]\,ds
   + 2\int_0^t\mathbb{E}[\mathcal{A}^u_s K(0,X_0,s,\tilde{X}_s)]\,ds\Bigr]dt \\
 &= 2\int_0^T\!\int_s^T\mathbb{E}\bigl[\mathcal{A}^u_s\bigl\{K(0,X_0,s,\tilde{X}_s) - K(t,X_t,s,\tilde{X}_s)\bigr\}\bigr]\,dt\,ds.
\end{align*}

To compute the expectations in \eqref{eq:Jhat-population}, we 
draw i.i.d.~samples $t_1,\ldots,t_M\sim \mathrm{Unif}[0,T]$ and, for each $t_m$, draw iid samples $X_{t_m}^{(1)},\ldots,X_{t_m}^{(N)}\sim\mu_{t_m}$.
Additionally draw i.i.d.~samples $X_0^{(1)},\ldots,X_0^{(N_0)} \sim \mu_0$.
Write $y_{m,i}:=(t_m, X_{t_m}^{(i)})$.

There are $MN$ sample points in total, which we index as $y_p$,
$p=1,\ldots,MN$, with $y_{(m-1)N+i}=y_{m,i}$.
For two indices $p=(m-1)N+i$ and $q=(l-1)N+j$, define the weight
$\hat{w}_{pq}:=T - t_p\vee t_q$.
Here $\mathbf{1}[A]$ denotes the indicator, equal to $1$ if condition~$A$ holds and $0$ otherwise.
The estimator of $\mathcal{Q}(u)$ is
\begin{equation}\label{eq:Jhat}
\begin{aligned}
 \hat{\mathcal{Q}}(u)
 &= \frac{T^2}{M^2 N^2}\sum_{\substack{p,q=1\\p\neq q}}^{MN}
   \hat{w}_{pq}\,(\mathcal{A}^u_y\mathcal{A}^u_{y'}K)(y_p,\,y_q) + \frac{2T}{MNN_0}\sum_{p=1}^{MN}\sum_{k=1}^{N_0}
   (T - t_p)\,
   \mathcal{A}^u_{y_p}\,K(z_{0,k},\,y_p) \\
 &\quad - \frac{2T^2}{M^2 N^2}\sum_{\substack{p,q=1\\p\neq q}}^{MN}
   \mathbf{1}[t_p\le t_q]\,
   \mathcal{A}^u_{y_p}\,K(y_q,\,y_p),
\end{aligned}
\end{equation}
where $z_{0,k}:=(0,X_0^{(k)})$.
The three sums estimate $\int_0^T I_1(t)\,dt$, $\int_0^T I_6(t)\,dt$, and $\int_0^T I_4(t)\,dt$, respectively.
The $p\neq q$ exclusion removes only the self-pair $(m{=}l,\,i{=}j)$,
whose kernel evaluation $K(y_p,y_p)$ would otherwise introduce a
$u$-independent constant that does not affect the optimum but inflates
the loss.  The $M^2N^2$ normalization---in place of the strictly
unbiased $M(M{-}1)N^2$---retains the same-time cross-particle pairs
$(m{=}l,\,i{\neq}j)$, trading strict unbiasedness for a substantially
lower variance at finite $(M,N)$.  We verify that the resulting bias
is $O(1/M)$.  Denote by $\hat I_1$ the first sum in \eqref{eq:Jhat}
(the bulk estimator) scaled by $T^2/(M^2N^2)$.  Recall that each
flat index corresponds to a (time-slice, particle) pair via
$p=(m{-}1)N+i$ and $q=(l{-}1)N+j$, so that $t_p=t_q$ iff $m=l$.
The $M^2N^2{-}MN$ off-diagonal pairs thus split into $MN(N{-}1)$
\emph{same-time cross-particle} pairs ($m=l$, $i\ne j$; the two points
share a time slice but differ in particle) and $M(M{-}1)N^2$
\emph{different-time} pairs ($m\ne l$; the two time draws are
independent).  
Under the present sampling scheme, the expectation of a single different-time
pair is equal to $B/T^2$, where
\[
 B\;:=\;\int_0^T\!\!\int_0^T\! w(s,r)\,
 \mathbb{E}\bigl[\mathcal{A}^u_s\mathcal{A}^u_r K(s,X_s,r,\tilde X_r)\bigr]\,ds\,dr
 \;=\;\int_0^T\! I_1(t)\,dt
\]
is the quantity that $\hat I_1$ is designed to estimate; and the
expectation of a single same-time pair is equal to $A$, where
\[
 A\;:=\;\frac{1}{T}\int_0^T\!(T-s)\,
 \mathbb{E}\bigl[\mathcal{A}^u_s\mathcal{A}^u_s K(s,X_s,s,\tilde X_s)\bigr]\,ds.
\]
Multiplying by the number of pairs of each type and by the prefactor
$T^2/(M^2N^2)$ gives
\[
 \mathbb{E}[\hat I_1]
 \;=\;\Bigl(1-\frac{1}{M}\Bigr)\,B
 \;+\;\frac{T^2(N{-}1)}{MN}\,A,
 \qquad
 \mathbb{E}[\hat I_1]-B
 \;=\;\frac{1}{M}\Bigl[\frac{T^2(N{-}1)}{N}\,A - B\Bigr]
 \;=\;O(1/M).
\]
An analogous computation for the cross-time boundary estimator (the
third sum in \eqref{eq:Jhat}) yields the same $O(1/M)$ bias.  The
boundary term involving $X_0$ (the middle sum) is already an unbiased
Monte Carlo average because its time and $X_0$ draws are independent.
Hence
\begin{equation}\label{eq:Qhat-bias}
 \mathbb{E}\bigl[\hat{\mathcal Q}(u)\bigr]-\mathcal Q(u)=O(1/M).
\end{equation}

The derivation above assumes that $t_1,\ldots,t_M$ are i.i.d.\
uniform on $[0,T]$.  Several experiments use a uniform grid of times
instead, and Section~\ref{sec:exp-eb} uses the four observation days.
The estimator is the same in all three cases, because the time sums
in $\hat{\mathcal Q}$ are quadrature rules for the time integrals in
$\mathcal Q$.  Only the error analysis changes.  For i.i.d.\ times
the bias is~\eqref{eq:Qhat-bias}.  For a uniform grid the time sums
form a midpoint rule with error $O(1/M)$, and only the particles are
random.  For the observation days, $M=4$ and the times are fixed and
unevenly spaced, so neither analysis covers this case.  This is a
limitation of Section~\ref{sec:exp-eb}.

Equation~\eqref{eq:Qhat-bias} is a statement about the expectation
alone, and it should not be read as convergence of the random variable
$\hat{\mathcal Q}(u)$.  For the estimator itself, fix
$u\in L^2(p\,dt\,dx)$, the regime in which the energy term
of~\eqref{eq:PlambdaMN} is finite.  The first and third sums
in~\eqref{eq:Jhat} are then $U$-statistics of degree two in the $M$
independent time blocks with integrable kernels, so the strong law for
$U$-statistics applies to them as $M\to\infty$ with $N$ fixed.  The
samples from $\mu_0$ enter the middle sum through an outer average
whose fluctuation shrinks only as $N_0$ grows.  Consequently
\[
 \hat{\mathcal Q}(u)\longrightarrow\mathcal Q(u)
 \qquad\text{almost surely as } M,N_0\to\infty \text{ with } N \text{ fixed},
\]
for each fixed $u$.  The convergence is therefore pointwise in $u$, and
it does not control the minimizers of $\hat{\mathcal Q}$, whose
convergence remains open.  Theorem~\ref{thm:convergence} is a
population result.  It concerns $J_\lambda$ with the marginal flow
known and $\lambda\to\infty$, and it is separate from the sampling
error analyzed here.

Consequently, given $\lambda>0$ and samples as above, we solve
\begin{equation}
\tag{$P_{\lambda}^{M,N}$}\label{eq:PlambdaMN}
 \min_{u\in\mathcal{U}}\;\;
 \frac{T}{MN}\sum_{m=1}^M\sum_{i=1}^N|u(y_{m,i})|^2
 + \lambda\,\hat{\mathcal{Q}}(u),
\end{equation}
where the kinetic energy is likewise estimated by the sample average.
The velocity field $u$ is parameterized by a neural network $u_\theta$ and $(P_{\lambda}^{M,N})$ is optimized by stochastic gradient descent with fresh mini-batches at each iteration.

\subsection{Stochastic extension}
\label{sec:stochastic}

The RKHS framework extends naturally to the stochastic setting.
When the underlying dynamics are governed by the SDE
$dX_t = u(t,X_t)\,dt + \sigma\,dW_t$ with known diffusion coefficient
$\sigma>0$, the continuity equation~\eqref{eq:continuity} is replaced by the
Fokker--Planck equation
$\partial_t p + \mathrm{div}(up) = (\sigma^2/2)\Delta p$,
and the operator in \eqref{eq:3.1} becomes
\begin{equation}\label{eq:Au-sigma}
 \mathcal{A}^u\varphi
 = \partial_t\varphi + u^{\mathsf{T}}\nabla_x\varphi
   + \frac{\sigma^2}{2}\Delta\varphi.
\end{equation}
This is the \emph{Nelson problem} of stochastic optimal transportation
\citep{Nelson1985,Mikami1990,Mikami2021}:
\begin{equation}
\tag{$\mathrm{NP}_\sigma$}
   V_\sigma(\mu)
  := \inf_{u\in\mathcal{U}_\sigma}
     \int_{[0,T]\times\mathbb{R}^d} |u(t,x)|^2\,p(t,x)\,dx\,dt,
\end{equation}
where $\mathcal{U}_\sigma$ consists of drifts satisfying the weak
Fokker--Planck equation with $\mu_t$-marginals at all times.

All derivations earlier in this section carry through with the modified
operator~\eqref{eq:Au-sigma}: the kernel formula for
$\mathcal{A}^u_y\mathcal{A}^u_{y'}K$ acquires additional terms involving
$\phi_3$ and $\phi_4$ (the higher-order derivatives of the radial kernel
function) and powers of $\sigma^2$, while the $H_t^u$-based penalty
retains the same structure with the $(\sigma^2/2)\Delta$
contribution.  Because $\mathcal{A}^u_y\mathcal{A}^u_{y'}K$ now involves
derivatives of $K$ up to fourth order in the spatial variables, the
stochastic extension requires the stronger regularity conditions
$\phi\in C^4(\mathbb{R})$ and $\int_{\mathbb{R}^d}|z|^4\,\widehat{K}(z)\,dz<\infty$
on the radial profile $\phi$ and the Fourier transform $\widehat{K}$ of the
kernel, in place of the $C^2$ and $\int|z|^2\widehat{K}(z)\,dz<\infty$
assumptions used in the deterministic case.  The sample estimator
of~\eqref{eq:Jhat} and the approximate problem~\eqref{eq:PlambdaMN}
are unchanged in form; only the closed-form expressions of the
integrand are updated.
The zero-noise limit~\eqref{eq:Veps-to-V0} guarantees that
$(\mathrm{NP}_\sigma)$ converges to $(P)$ as $\sigma\to 0$, so the
deterministic problem of this paper is the natural foundation on
which the stochastic extension rests.

\begin{rem}
In the stochastic setting, Lavenant et al.\ \citep{lav:2024}
developed an alternative approach based on entropy minimization with
respect to a Brownian reference measure, yielding a convex optimization
problem with consistency guarantees.  Their framework requires
$\sigma>0$ and does not extend to the deterministic case.  Our RKHS
approach and theirs are thus complementary: the present method is the
only available solver for $\sigma=0$ with all-time marginals, while for
$\sigma>0$ both approaches are applicable with different computational
trade-offs.
\end{rem}

\section{Numerical illustration}
\label{sec:numerical}

All source code for the experiments reported in this section is
publicly available at
\begin{center}
  \url{https://github.com/yumiharu-nakano/alltimeOT}
\end{center}

\paragraph{Two error metrics.}
Throughout this section the accuracy of a recovered velocity is
reported in two ways.  The \emph{grid} $\mathrm{MSE}$ averages
$|\hat u-u^*|^2$ uniformly over a rectangle in $(t,x)$, which is the
convention used by the trajectory-inference literature we compare
against.  The \emph{probability-weighted} error
\begin{equation}\label{eq:pweighted}
 \mathcal{D}(\hat u,u^*)
 :=\int_0^T\!\!\int_{\mathbb{R}^d}|\hat u(t,x)-u^*(t,x)|^2\,p(t,x)\,dx\,dt
\end{equation}
is the quantity that Theorem~\ref{thm:convergence} controls, and it
weights each region by the mass the marginal flow actually places
there.  The two differ whenever the error has structure in $x$: a
uniform grid gives equal weight to the tails, where few samples
constrain the fit and where a misfit has little effect on the observed
marginals.  For a linear-in-parameters model
$u_w=w^{\mathsf T}\Phi$, \eqref{eq:pweighted} is available in closed
form as $(\hat w-w^*)^{\mathsf T}G(\hat w-w^*)$ with the feature Gram
matrix $G=\int_0^T\!\int\Phi\Phi^{\mathsf T}p\,dx\,dt$.  We keep the
grid $\mathrm{MSE}$ in the baseline comparisons, so that the baselines
are measured on the convention they were designed for, and we report
\eqref{eq:pweighted} as well for every model that is linear in its
parameters.

\subsection{Experiment~1: 1d Gaussian translation}
\label{sec:exp-gaussian}

We validate the RKHS-penalized estimator \eqref{eq:PlambdaMN} on the simplest
nontrivial instance of the all-time-marginal ODE transport problem.

Let $d=1$, $T=1$, and consider the Gaussian translation
$\mu_t = \mathcal{N}(-1+2t,\,1)$.
The covariance is constant and the mean moves linearly, so the unique
minimum-energy solution of $(P)$ is the constant drift
$u^*(t,x)=2$.

We use the affine model $u(t,x)=w_0+w_1 t+w_2 x$ ($3$~parameters), for which
the true weight vector is $w^*=(2,\,0,\,0)$.
The objective $(P_{\lambda}^{M,N})$ is minimized by L-BFGS-B on the
ensemble-averaged loss $\bar{\mathcal{L}}(w) = K_{\mathrm{ens}}^{-1}\sum_{k=1}^{K_{\mathrm{ens}}}\mathcal{L}^{(k)}(w)$,
where each $\mathcal{L}^{(k)}$ is evaluated on an independent data draw.
Averaging over $K_{\mathrm{ens}}=30$ draws reduces the Monte Carlo variance of the
gradient sufficiently for the quasi-Newton method to converge.
Hyperparameters: $\lambda=1000$, $M=50$ time slices (uniform grid),
$N=25$ particles per time, $N_0=50$ initial particles, and the isotropic
Gaussian kernel
$K(\xi,\eta)=\exp\!\bigl(-|\xi-\eta|^2/(2h^2)\bigr)$
on $\mathbb{R}^{d+1}$ with bandwidth $h=1$, used as the reproducing kernel
throughout all experiments unless stated otherwise.  The Gaussian
kernel automatically satisfies the structural assumptions of
Section~\ref{sec:3-residual}: its Fourier transform
$\widehat K(z)\propto e^{-h^2|z|^2/2}$ is strictly positive on
$\mathbb{R}^{d+1}$, so (A1) holds (universality
\citep{Sriperumbudur2010}), and the polynomial moments
$\int(1+|z|^k)\widehat K(z)\,dz$ are finite for every $k\ge 0$, so
(A2) is met in both the $C^2$ (deterministic) and $C^4$ (stochastic)
versions.

All four random initializations converge to the same minimizer
\[
  \hat{w} = (1.808,\;0.277,\;-0.080)
  \quad\text{vs.}\quad
  w^* = (2,\;0,\;0),
\]
giving drift grid $\mathrm{MSE}(\hat{u},u^*)=0.044$ on the evaluation grid
$(t,x)\in[0,1]\times[-4,4]$.
Figure~\ref{fig:exp1-drift} shows the learned drift at five time slices:
the learned curve is very close to the constant $u^*=2$, with small
finite-sample deviations concentrated in the tails of the evaluation grid.

To assess distributional accuracy, we simulate $5{,}000$ particles from
$\mu_0$ under the learned ODE $\dot{X}_t=\hat{u}(t,X_t)$ with $1{,}000$
Euler steps.
Table~\ref{tab:exp1-w2} reports the Wasserstein-$2$ distance $W_2$ between
the simulated empirical marginal and the true $\mu_t$, computed from
sorted quantiles (exact in $d=1$).  We use $W_2$ here because the
all-time OT objective~\eqref{eq:P0-intro} is a quadratic-cost
Benamou--Brenier energy, so $W_2$ is its natural metric.  All $W_2$
values are below $0.1$, confirming accurate marginal recovery, and
Figure~\ref{fig:exp1-marginal} compares the simulated histograms with
the prescribed densities.

\begin{table}[htbp]
\centering
\caption{Experiment~1 ($d=1$, Gaussian translation): Wasserstein-$2$ distances.}
\label{tab:exp1-w2}
\begin{tabular}{lcccccc}
\toprule
$t$ & $0.00$ & $0.25$ & $0.50$ & $0.75$ & $1.00$ & mean \\
\midrule
$W_2(\hat\mu_t,\mu_t)$ & 0.020 & 0.033 & 0.055 & 0.075 & 0.090 & 0.055 \\
\bottomrule
\end{tabular}
\end{table}

\begin{figure}[htbp]
\centering
\includegraphics[width=\textwidth]{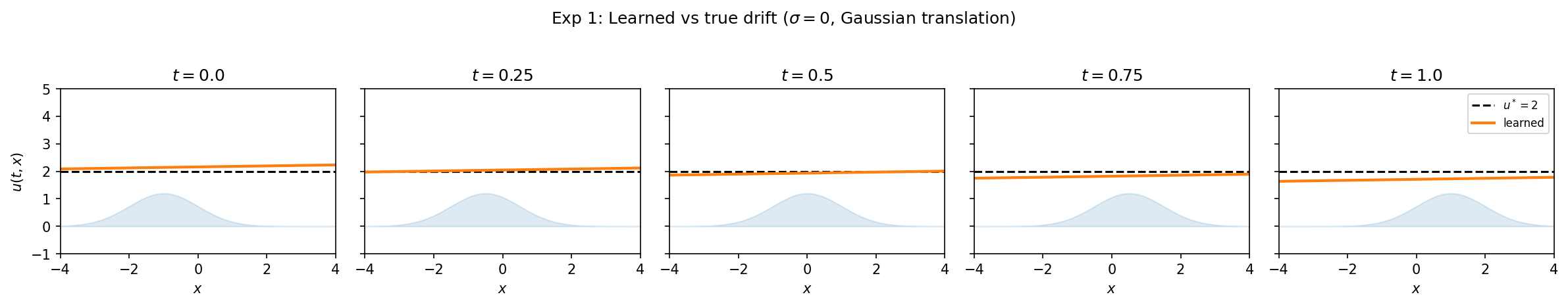}
\caption{Experiment~1: learned drift $\hat u(t,x)$ (solid orange) vs.\ true $u^*=2$ (dashed black) at $t\in\{0,0.25,0.5,0.75,1.0\}$.  Blue shading shows the scaled density $\mu_t$.}
\label{fig:exp1-drift}
\end{figure}

\begin{figure}[htbp]
\centering
\includegraphics[width=\textwidth]{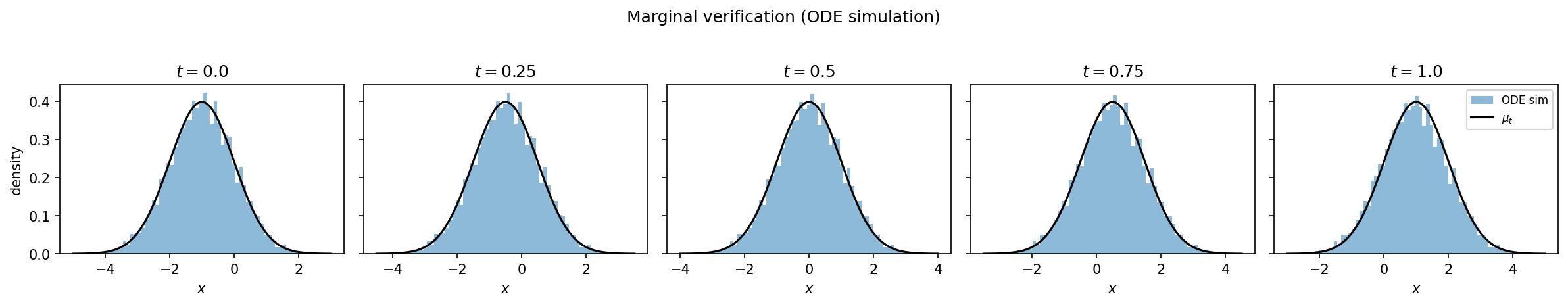}
\caption{Experiment~1: marginal verification.  Histograms of $5{,}000$ ODE-simulated particles (blue) vs.\ true $\mu_t=\mathcal{N}(-1+2t,1)$ (black) at five time slices.}
\label{fig:exp1-marginal}
\end{figure}

\subsection{Experiment~2: Roundtrip motion --- all-time vs.\ two-marginal}
\label{sec:exp-roundtrip}

This experiment shows that all-time marginal constraints carry strictly
more information than two-marginal constraints.

Let $d=1$, $T=1$, and consider the roundtrip flow
$\mu_t = \mathcal{N}(2\sin(\pi t),1)$.
Since $\mu_0=\mu_1=\mathcal{N}(0,1)$, the two-marginal OT problem returns
the identity map (i.e.\ $u\equiv 0$), yet the all-time marginals reveal the
non-trivial time-varying drift $u^*(t,x)=2\pi\cos(\pi t)$.

We parametrize the drift as $u(t,x) = w_0 + w_1 t + w_2 t^2 + w_3 x$
($4$~parameters), giving a quadratic-in-$t$ approximation to $u^*$.
The $L^2$-best quadratic fit of $2\pi\cos(\pi t)$ on $[0,1]$ has
weights $(7.64,\,-15.27,\,0.00)$ (approximately linear, since
$\cos(\pi t)$ is antisymmetric about $t=1/2$).
In the optimization we use the same ensemble-averaged L-BFGS-B scheme as Experiment~1
with $\lambda=1000$, $M=50$, $N=25$, $N_0=50$, $K_{\mathrm{ens}}=30$.

All initializations converge to
$\hat w=(7.20,-14.92,1.48,0.00)$, close to the $L^2$-optimal quadratic fit
with $w_3\approx 0$ (the learned drift is independent of $x$, as expected).
Figure~\ref{fig:exp2-time} compares the time profiles at $x=0$.
Table~\ref{tab:exp2-w2mmd} reports two complementary metrics: the
Wasserstein-$2$ distance $W_2(\hat\mu_t,\mu_t)$ (sorted quantile-based,
exact in $d=1$) and the kernel Maximum Mean Discrepancy $\mathrm{MMD}$
computed with the same Gaussian kernel $K(\xi,\eta)=\exp(-|\xi-\eta|^2/2)$ used
in the RKHS penalty.  Reporting MMD alongside $W_2$ keeps the
evaluation metric consistent with the training objective.
The all-time drift attains mean $W_2=0.111$ and mean $\mathrm{MMD}=0.036$
versus $0.971$ and $0.376$ for the two-marginal baseline
($9\times$ and $10\times$ improvements, respectively), with the largest
gap at $t=0.5$ ($W_2$: $0.19$ vs.\ $1.99$;
$\mathrm{MMD}$: $0.053$ vs.\ $0.741$).  The two-marginal solution agrees
with $\mu_t$ only at the endpoints.  In drift grid MSE the all-time
method gives $0.64$ vs.\ $20.13$ for zero drift ($32\times$ reduction).

The residual misfit visible in Figure~\ref{fig:exp2-marginal} at $t=0.5$
and $t=1.0$ reflects the limited expressiveness of the $4$-parameter
quadratic-in-$t$ model: $u^*=2\pi\cos(\pi t)$ is not exactly
representable by a quadratic, so even the $L^2$-best fit
$7.64-15.27\,t$ deviates from $u^*$ by up to $\pm1.36$ at the endpoints,
and this bias accumulates in the ODE integration to produce the small
residual $W_2$ values in Table~\ref{tab:exp2-w2mmd}.  Richer dictionaries
(e.g.\ a tanh basis as in Experiment~3, or a neural network as in
Section~\ref{sec:exp-stochastic}) reduce the residual further; we retain
the minimal quadratic model here in order to isolate the effect of
all-time marginal information from the effect of model expressiveness.

\begin{table}[htbp]
\centering
\caption{Experiment~2 (Roundtrip): $W_2(\hat\mu_t,\mu_t)$ and
$\mathrm{MMD}(\hat\mu_t,\mu_t)$ at five time slices.}
\label{tab:exp2-w2mmd}
\begin{tabular}{llccccc|c}
\toprule
& $t$ & $0.00$ & $0.25$ & $0.50$ & $0.75$ & $1.00$ & mean \\
\midrule
\multirow{3}{*}{$W_2$}
 & True $u^*$                    & 0.020 & 0.020 & 0.021 & 0.022 & 0.023 & 0.021 \\
 & All-time learned              & 0.020 & 0.067 & 0.193 & 0.023 & 0.252 & 0.111 \\
 & 2-marginal ($u{\equiv}0$)     & 0.020 & 1.409 & 1.995 & 1.409 & 0.020 & 0.971 \\
\midrule
\multirow{3}{*}{$\mathrm{MMD}$}
 & True $u^*$                    & 0.000 & 0.022 & 0.028 & 0.000 & 0.024 & 0.015 \\
 & All-time learned              & 0.000 & 0.000 & 0.053 & 0.000 & 0.128 & 0.036 \\
 & 2-marginal ($u{\equiv}0$)     & 0.000 & 0.555 & 0.741 & 0.561 & 0.022 & 0.376 \\
\bottomrule
\end{tabular}
\end{table}

\begin{figure}[htbp]
\centering
\includegraphics[width=0.65\textwidth]{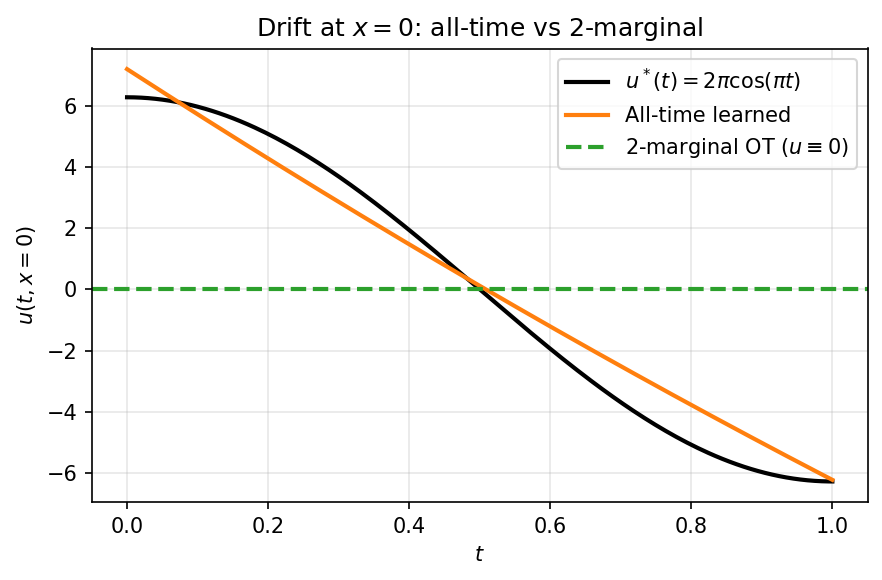}
\caption{Experiment~2 (Roundtrip): drift at $x=0$ as a function of~$t$.
The all-time learned drift (orange) closely tracks $u^*=2\pi\cos(\pi t)$ (black),
while the two-marginal OT solution $u\equiv 0$ (green dashed) is entirely flat.
This demonstrates that all-time marginal constraints recover the nontrivial
optimal velocity even when $\mu_0=\mu_1$.}
\label{fig:exp2-time}
\end{figure}

\begin{figure}[htbp]
\centering
\includegraphics[width=\textwidth]{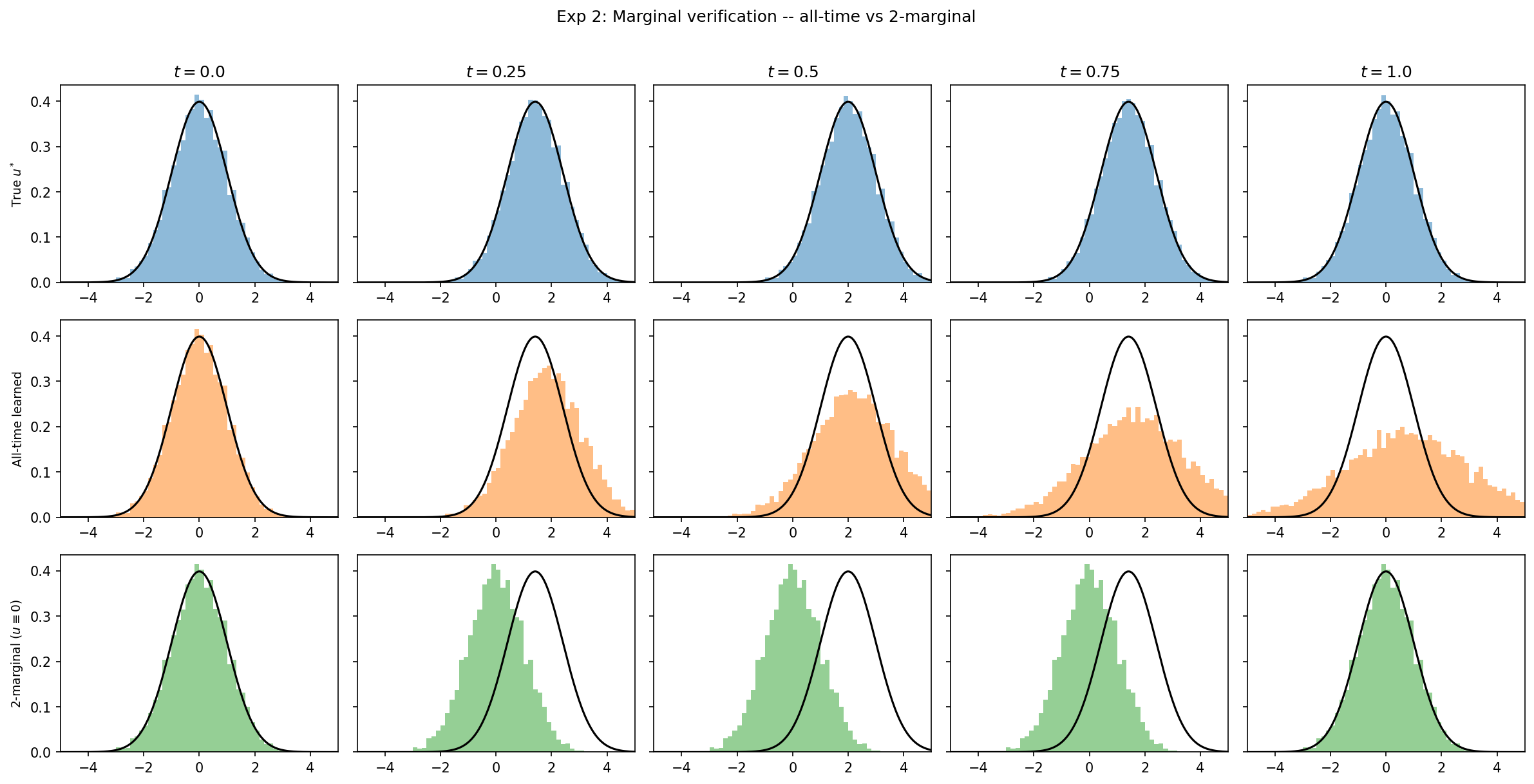}
\caption{Experiment~2: marginal densities at $t\in\{0,0.25,0.5,0.75,1\}$.
Top: true drift; middle: all-time learned; bottom: two-marginal baseline.
At $t=0.5$, the two-marginal baseline remains centered at zero while the true
distribution has shifted to $\mathcal{N}(2,1)$.}
\label{fig:exp2-marginal}
\end{figure}

We compare with three families of methods.
(i)~\emph{Waddington-OT} (WOT)~\citep{Schiebinger2019} chains entropic OT
couplings between $M$ snapshots and reconstructs the drift by finite
differences, $\hat u(t_k,x)\approx(T_k(x)-x)/\Delta t$.
(ii)~The multi-marginal Monge formulation of~\citep{nak-sai:2026} (``MMOT'')
parametrizes $N{-}1$ transport maps and penalizes an MMD distance
$\gamma_K^2(T_k\sharp\mu_0,\mu_{t_k})$; we use affine maps
$T_k(x)=A_kx+b_k$ (strictly generalizing the translation ansatz) with
$200$ samples per marginal and report the best over
$\lambda\in\{10^3,10^4,10^5\}$, $\alpha\in\{0.5,1,2\}$, three
initializations, and L-BFGS-B.
(iii)~\emph{Vanilla flow matching} (FM)~\citep{Lipman2023,Albergo2023}
regresses a velocity field onto the straight-line interpolant
$X_t=(1-t)X_0+t X_1$ with $X_0,X_1$ drawn independently from $\mu_0,\mu_1$.
The entropic OT coupling used by WOT is solved by our own
log-domain Sinkhorn implementation (so the code has no external OT
dependency).

Tables~\ref{tab:exp2-wot}--\ref{tab:exp2-mmot} summarize the results.
\emph{Flow matching} fails on this instance, giving drift MSE $=20.14$,
mean $W_2=0.979$ and mean $\mathrm{MMD}=0.395$, indistinguishable from
the trivial $u\equiv0$ baseline.  The field it regresses on is not
zero, but its projection onto the model class is.  With the
independent coupling used here, $X_0,X_1\sim\mathcal{N}(0,1)$ and
$X_t=(1-t)X_0+tX_1$, the conditional target is
\[
 \mathbb{E}[X_1-X_0\,|\,X_t]=\beta(t)\,X_t,
 \qquad \beta(t)=\frac{2t-1}{(1-t)^2+t^2}.
\]
Every method in this experiment uses the dictionary
$u_\theta(t,x)=w_0+w_1t+w_2t^2+w_3x$.  The target has zero mean, and
its coefficient on $x$ is
\[
 \frac{\int_0^1\beta(t)\,\mathrm{Var}(X_t)\,dt}{\int_0^1\mathrm{Var}(X_t)\,dt}
 =\frac{\int_0^1(2t-1)\,dt}{\int_0^1\bigl((1-t)^2+t^2\bigr)dt}=0,
\]
because $\beta(t)\mathrm{Var}(X_t)=2t-1$ is odd about $t=1/2$ while the
dictionary carries a constant coefficient on $x$.  The fit therefore
returns $(w_0,w_1,w_2,w_3)=(0.009,-0.021,0.100,-0.018)$, that is
$\hat u\approx 0$.

The experiment shows that the endpoint marginals leave the round-trip
dynamics undetermined.  The zero output is specific to this model
class rather than a property of every two-marginal method.  With $\mu_0=\mu_1$ the identity map is the
$W_2$-optimal coupling, the intermediate law is a modeling choice
rather than data, and all information about the true time-varying flow
sits in $\{\mu_t\}_{0<t<1}$.  A richer dictionary would let flow
matching reproduce $\beta(t)X_t$.  The flow of that field carries
$\mu_0$ along $\mathcal{N}\bigl(0,(1-t)^2+t^2\bigr)$, the law of the
interpolant, which is centered at the origin at every time and has
variance $1/2$ at $t=1/2$.  The intermediate marginals are therefore
still wrong, since $\mu_t=\mathcal{N}(2\sin(\pi t),1)$.
The drift error of \emph{WOT} grows rapidly with $M$, from $0.08$ at
$M{=}5$ to $24.4$ at $M{=}50$, because the $O(1/\sqrt{N})$ barycentric
noise is amplified by the factor $1/\Delta t$.  \emph{Affine-MMOT}
behaves the same way, with drift MSE rising from $0.076$ at $N{=}5$ to
$2.29$ at $N{=}50$ by the same amplification, while the learned $A_k$
stay within $4\%$ of $1$ throughout.  The all-time method is
insensitive to temporal resolution and attains drift MSE $0.64$,
because a $4$-parameter global model pools information across all
times.  Figure~\ref{fig:exp2-wot} shows the two drift profiles and
the marginal tracking side by side.
On marginal tracking, however, WOT and MMOT \emph{outperform} the
all-time method, with $W_2\approx 0.05$ against $0.11$ and
$\mathrm{MMD}\approx 0.01$ to $0.02$ against $0.04$, since they
enforce marginal matching directly while the all-time estimator
enforces it only through the residual penalty.  This is a genuine
limitation.  When the downstream task is to generate samples faithful
to the prescribed marginals at a few discrete time points, a direct
marginal-matching method is often the better choice.  The all-time
formulation is preferable when
(i)~fine temporal resolution makes $O(N)$ scaling prohibitive,
(ii)~the marginals are non-Gaussian or high-dimensional, MMOT being
reported to fail for $d\ge 5$ \citep[Table~8]{nak-sai:2026}, or
(iii)~a continuous velocity field is required rather than consecutive
transport maps.  We treat this experiment as our main quantitative
benchmark and do not repeat the WOT/MMOT comparisons in the subsequent
subsections.

\begin{table}[htbp]
\centering
\caption{Experiment~2: comparison with Waddington-OT ($N=200$ per
snapshot).  MMD uses the same Gaussian kernel ($h=1$) as the training loss.}
\label{tab:exp2-wot}
\begin{tabular}{lccc}
\toprule
Method & drift MSE & mean $W_2$ & mean $\mathrm{MMD}$ \\
\midrule
True $u^*$                & 0.000  & 0.021 & 0.002 \\
All-time (ours)           & 0.635  & 0.109 & 0.040 \\
WOT ($M{=}5$)             & 0.080  & 0.060 & 0.022 \\
WOT ($M{=}10$)            & 0.600  & 0.042 & 0.013 \\
WOT ($M{=}20$)            & 3.620  & 0.042 & 0.013 \\
WOT ($M{=}50$)            & 24.360 & 0.046 & 0.015 \\
Flow matching             & 20.144 & 0.979 & 0.395 \\
2-marginal ($u{\equiv}0$) & 20.134 & 0.971 & 0.376 \\
\bottomrule
\end{tabular}
\end{table}

\begin{figure}[htbp]
\centering
\includegraphics[width=\textwidth]{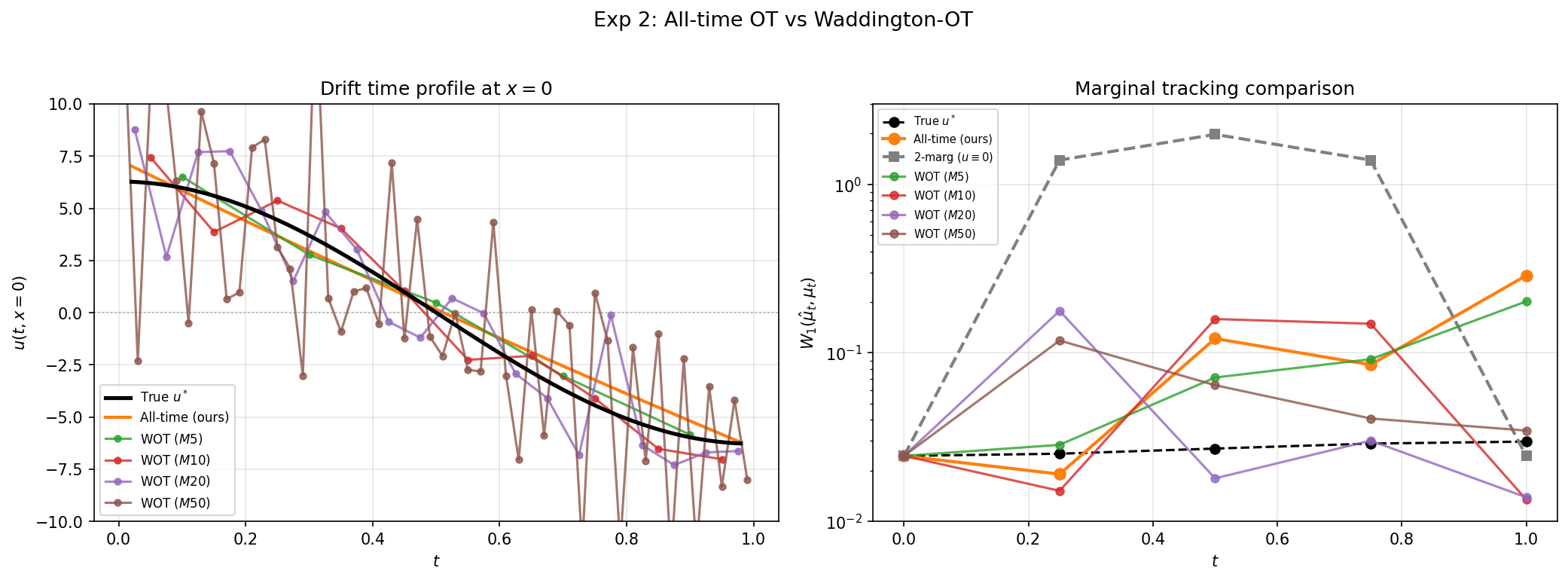}
\caption{Experiment~2: All-time OT vs.\ Waddington-OT.
Left: drift time profile at $x=0$. The all-time learned drift (orange)
closely tracks the true cosine (black), while WOT estimates become
progressively noisier as $M$ increases.
Right: marginal tracking~($W_2$).
WOT with many snapshots achieves lower $W_2$ than the all-time method,
but at the cost of degraded drift recovery.}
\label{fig:exp2-wot}
\end{figure}

\begin{table}[htbp]
\centering
\caption{Experiment~2: comparison with learned multi-marginal OT
(affine MMOT, $T_k(x)=A_kx+b_k$).
MMOT is trained from scratch on sample data (no prior knowledge);
the table reports the best result across three $\lambda$ values,
three $\alpha$ values, and three initializations.  MMD uses the same
Gaussian kernel ($h=1$) as the training loss.}
\label{tab:exp2-mmot}
\begin{tabular}{lcccc}
\toprule
Method & \#params & drift MSE & mean $W_2$ & mean $\mathrm{MMD}$ \\
\midrule
True $u^*$        & ---  & 0.000  & 0.021 & 0.002 \\
All-time (ours)   & 4    & 0.635  & 0.111 & 0.036 \\
MMOT affine $N{=}5$  & 10   & 0.076  & 0.052 & 0.016 \\
MMOT affine $N{=}10$ & 20   & 0.212  & 0.047 & 0.014 \\
MMOT affine $N{=}20$ & 40   & 0.334  & 0.055 & 0.017 \\
MMOT affine $N{=}50$ & 100  & 2.292  & 0.053 & 0.020 \\
Flow matching     & 4    & 20.144 & 0.979 & 0.395 \\
2-marginal ($u{\equiv}0$) & 0 & 20.134 & 0.971 & 0.376 \\
\bottomrule
\end{tabular}
\end{table}

\subsection{Experiment~3: Bimodal merging --- nonlinear drift recovery}
\label{sec:exp-bimodal}

Experiments~1 and~2 used drifts that are linear or affine in~$x$.  To
test whether the estimator can recover a genuinely nonlinear drift, we
consider a one-dimensional bimodal flow that merges into a single
symmetric mixture:
\begin{equation}
  \mu_t = \frac{1}{2} \mathcal{N}\bigl(-2+2t, 1\bigr)
        + \frac{1}{2} \mathcal{N}\bigl( 2-2t, 1\bigr), \quad t\in[0,1].
\end{equation}
At $t=0$ the two modes are at $\pm2$, at $t=1$ they coincide at the
origin, and the velocity field must push mass from each mode towards
the center.

Let $\varphi_i(t,x)$ denote the two Gaussian densities, and set
$m_1(t)=-2+2t$, $m_2(t)=2-2t$ (so that $m_1+m_2=0$).  Rewriting
the continuity equation as $\partial_x(u\rho)=-\partial_t\rho$,
integrating in $x$ from $-\infty$, and using $\rho\to 0$ at $-\infty$
gives $u\rho=-\partial_t F$ with $F(t,x)=\int_{-\infty}^{x}\rho(t,y)\,dy$,
so that $u=-\partial_t F/\rho$.  A short calculation then yields
\begin{equation}
  u^*(t,x)
   = \frac{2(\varphi_1-\varphi_2)}{\varphi_1+\varphi_2}
   = 
  -2 \tanh \bigl(2(1-t) x\bigr).
  \label{eq:exp4-true}
\end{equation}
The target is therefore a smooth, odd, bounded nonlinear function of
$x$ whose slope decreases as the two modes merge.

We test three parametric drift classes of increasing expressiveness,
all trained under the \emph{same} all-time RKHS loss and the \emph{same}
sample budget:
\begin{align*}
  \text{(a) bilinear (4p):}\quad &\Phi=[1,\,t,\,x,\,tx],\\
  \text{(b) tanh dictionary (8p):}\quad
    &\Phi=[1,\,t,\,x,\,tx,\,
            \tanh x,\,t\tanh x,\,
            \tanh(2x),\,t\tanh(2x)],\\
  \text{(c) MLP ($\approx 2.5$k):}\quad &\text{two hidden layers of $48$ tanh units,
      input $(t,x)\in\mathbb R^2$.}
\end{align*}
Models (a) and (b) are linear in parameters and optimized by
ensemble-averaged L-BFGS-B as in Sections~\ref{sec:exp-gaussian}
and~\ref{sec:exp-roundtrip}.  Model (c) is a universal approximator
trained by Adam with minibatch SGD under the \emph{same} RKHS loss:
since the loss depends on the drift only through its sample values
$u(t_p,x_p)$, the linear and neural parametrizations share a single
loss implementation, and autograd supplies the gradient with respect
to the network weights.

The $\tanh$ dictionary is designed to approximate~\eqref{eq:exp4-true},
but cannot represent it exactly: any finite linear combination
$a(t)\tanh(2x)+b(t)\tanh(x)+\cdots$ scales the \emph{amplitude} of
$\tanh$ by a polynomial in $t$, whereas $-2\tanh\!\bigl(2(1-t)x\bigr)$
scales the \emph{argument} by $(1-t)$.  The two agree at $t\in\{0,1\}$
and in the small-$x$ linear regime, but differ in general; the
dictionary fit is only the $L^2$-best projection onto the feature
span.  The MLP, by contrast, is not subject to this approximation gap
and serves as a control for whether the remaining error is driven by
dictionary misspecification or by finite-sample fluctuations of the
sample-based RKHS estimator.

Each of the $M=50$ time slices is sampled by drawing $N=30$ particles
from a fair coin mixture of the two Gaussians. $N_0=60$ particles are
drawn from $\mu_0$.  Hyperparameters are $h=1$, $\lambda=5000$, and
$K_{\mathrm{ens}}=20$ (linear models) / $K_{\mathrm{batch}}=3$ SGD
batches per step for $N_{\mathrm{iter}}=4000$ Adam iterations with
cosine schedule $3{\cdot}10^{-3}\!\to\!10^{-5}$ (MLP).  All three
models are trained on the identical $K_{\mathrm{ens}}=20$ pre-cached
batches.

\begin{table}[htbp]
\centering
\begin{tabular}{lccccc}
\toprule
model & \#params & drift MSE & mean $W_2$ & mean $\mathrm{MMD}$ & max $W_2$ \\
\midrule
zero drift                 & 0                 & $2.544$ & $0.756$ & $0.321$ & $1.274$ \\
bilinear                   & 4                 & $5.264$ & $0.242$ & $0.092$ & $0.475$ \\
\textbf{tanh dictionary}   & 8                 & $\mathbf{0.187}$ & $\mathbf{0.074}$ & $\mathbf{0.019}$ & $\mathbf{0.162}$ \\
\textbf{MLP}               & $\approx 2{,}500$ & $\mathbf{0.181}$ & $\mathbf{0.070}$ & $\mathbf{0.019}$ & $\mathbf{0.124}$ \\
\bottomrule
\end{tabular}
\caption{Experiment~3 (bimodal merging).  The drift grid MSE is
computed on the uniform grid $[0,1]\times[-4,4]$ against the true
drift~\eqref{eq:exp4-true}; the $W_2$ and $\mathrm{MMD}$ columns report
the Wasserstein-2 distance and the Gaussian-kernel MMD (bandwidth
$h=1$, matching the training loss) between the ODE-simulated marginal
and $\mu_t$, averaged or maximized over $t\in\{0,0.25,0.5,0.75,1\}$.}
\label{tab:exp4}
\end{table}

Table~\ref{tab:exp4} exhibits a striking decoupling in the bilinear
column: its drift grid MSE ($5.26$) is \emph{worse} than that of
the zero drift ($2.54$), yet its mean $W_2$ ($0.24$) and mean
$\mathrm{MMD}$ ($0.09$) remain small.  The reason is geometric: a
linear-in-$x$ drift grows without bound in the tails of $[-4,4]$,
where the true drift saturates to $\mp 2$, and the grid MSE is
dominated by that extrapolation gap.  Within the data support (near
$x=\pm 2$) the bilinear fit still pushes mass towards the origin at
roughly the right speed, so the ODE-simulated marginals stay close to
$\mu_t$.  This illustrates that pointwise drift error and
distributional marginal error are genuinely different objectives, and
small $W_2$ does not imply correct drift (nor vice versa).

The tanh dictionary and the MLP recover the drift to comparable
accuracy: drift MSE $0.187$ vs.\ $0.181$ ($\approx 3\%$ relative
difference) and mean $W_2$ $0.074$ vs.\ $0.070$ ($\approx 5\%$),
despite a $\approx 300$-fold difference in parameter count.  These
gaps are on the order of the Monte Carlo standard deviation of the
penalty estimator at the $(M,N,N_0,K_{\mathrm{ens}})$ values used
here, so we do not read the small numerical gap as a meaningful
difference between the two model classes.  Two consequences follow.
First, although the tanh dictionary cannot realize
$-2\tanh(2(1-t)x)$ exactly, its $L^2$-best projection is
already within the statistical noise, so the missing argument-scaling
does not limit accuracy at this sample budget.  Second, using a
universal approximator in place of a hand-crafted dictionary yields no
additional precision, confirming that the residual error is dominated
by finite-sample fluctuations of the sample-based RKHS estimator
rather than by the expressive power of the model class.  The all-time RKHS loss is
therefore effectively \emph{model-agnostic}: the same objective drives
the convex linear fit and the non-convex neural fit to comparable
accuracy, and the choice between them can be made on grounds of
interpretability, training cost, or convex optimization guarantees
rather than recovery accuracy.

\begin{figure}[htbp]
\centering
\includegraphics[width=\textwidth]{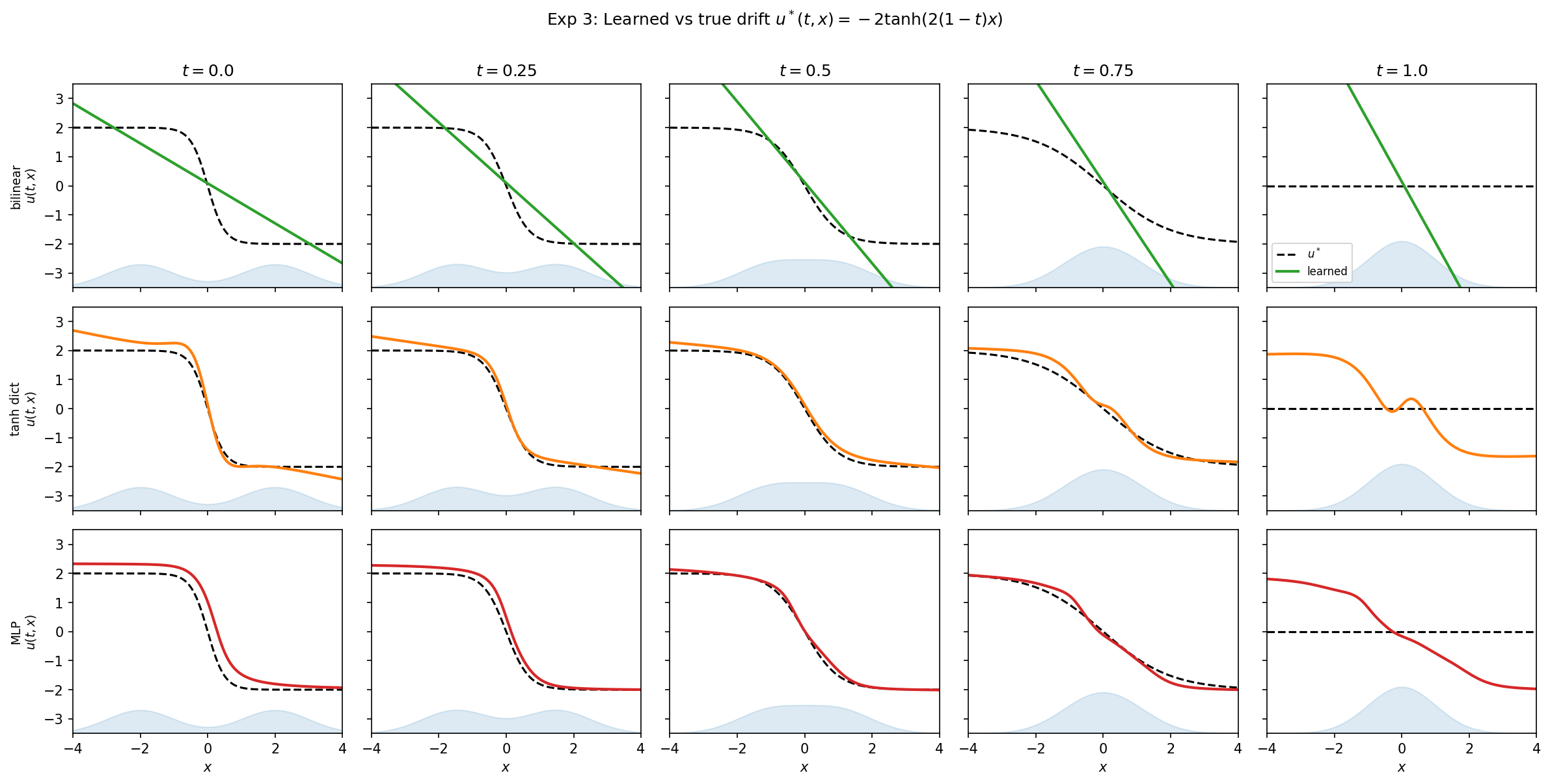}
\caption{Experiment~3: learned drift $\hat u(t,x)$ (colored) vs.\
true drift $u^*(t,x)=-2\tanh(2(1-t)x)$ (dashed black) at
$t\in\{0,0.25,0.5,0.75,1\}$.  Rows: bilinear, tanh dictionary, MLP.
Grey shading shows the bimodal density~$\mu_t$.}
\label{fig:exp4-drift}
\end{figure}

\begin{figure}[htbp]
\centering
\begin{minipage}[t]{0.48\textwidth}
\centering
\includegraphics[width=\linewidth]{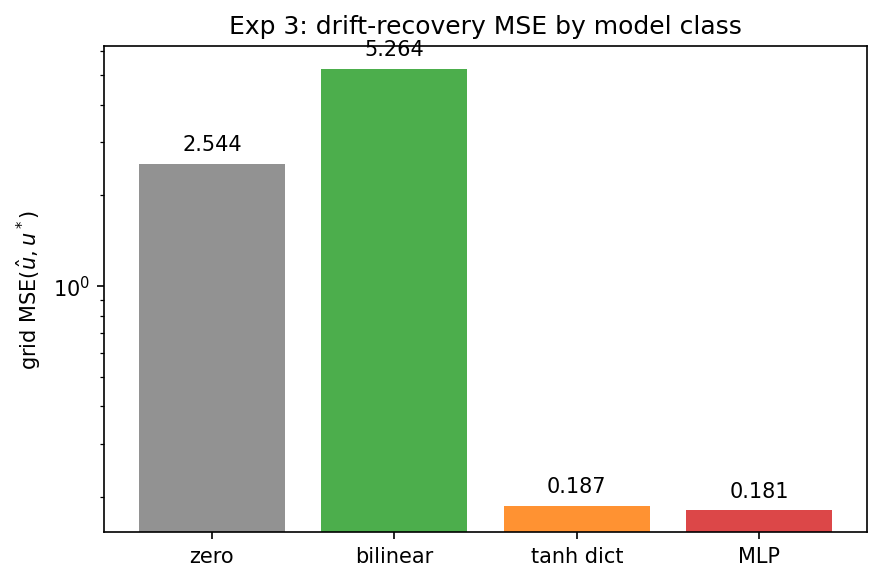}
\end{minipage}\hfill
\begin{minipage}[t]{0.48\textwidth}
\centering
\includegraphics[width=\linewidth]{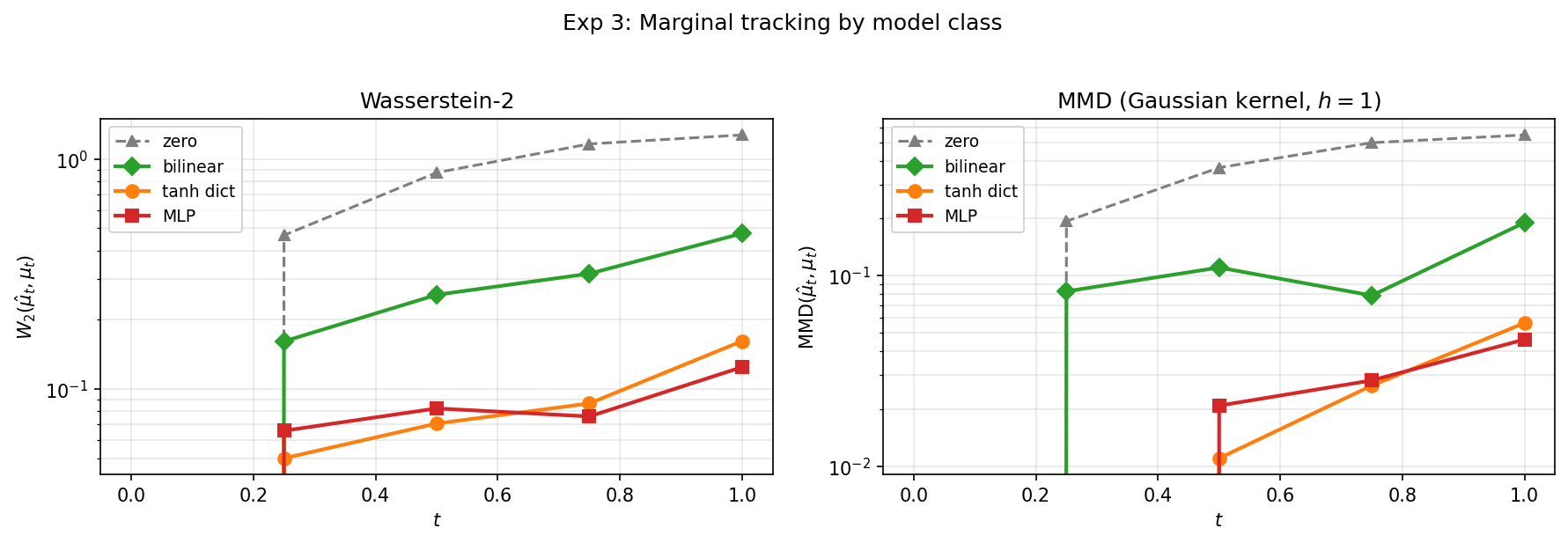}
\end{minipage}
\caption{Experiment~3: drift MSE (left, log scale) and marginal
$W_2$ / $\mathrm{MMD}$ distances (right, two panels) for the three
model classes plus the zero-drift baseline.  Note the bilinear model's
large MSE coexisting with a small $W_2$ / $\mathrm{MMD}$: the MSE is
inflated by extrapolation in the tails of the grid, but the
marginal-level transport is still broadly correct.
Figure~\ref{fig:exp4-drift} shows the recovered drift curves,
Figure~\ref{fig:exp4-metrics} places the three model classes on the
two axes at once, and Figure~\ref{fig:exp4-marginal} shows the
simulated marginals.  The tanh
dictionary and the MLP produce curves whose relative MSE gap
($\approx 3\%$) is within the Monte Carlo standard deviation of the
loss, indicating that model-class expressiveness is not the
limiting factor at this sample budget.}
\label{fig:exp4-metrics}
\end{figure}

\begin{figure}[htbp]
\centering
\includegraphics[width=\textwidth]{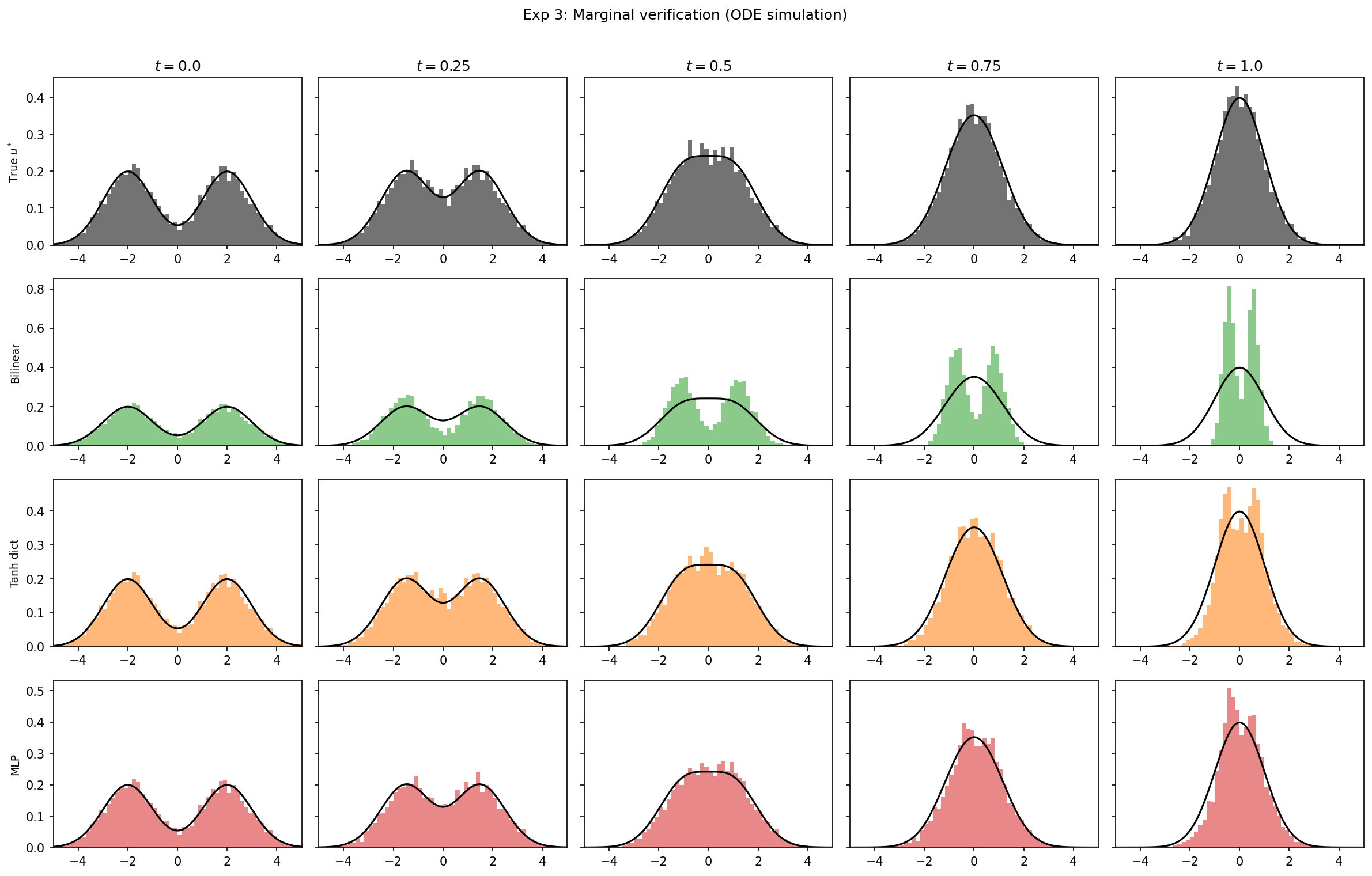}
\caption{Experiment~3: marginal verification.  Histograms of
$5{,}000$ ODE-simulated particles (colored) vs.\ the true bimodal
density~$\mu_t$ (black curve) at five time slices.  Rows: true $u^*$,
bilinear, tanh dictionary, MLP.  The tanh dictionary and the MLP
both reproduce the two modes faithfully throughout the merge; the
bilinear fit misses the tails but tracks the bulk correctly.}
\label{fig:exp4-marginal}
\end{figure}

We re-run the same WOT and affine-MMOT baselines as in
Section~\ref{sec:exp-roundtrip} on this bimodal flow in order to
examine their behavior away from Gaussian marginals.  An affine map
cannot transport one bimodal distribution to another of the same family
(shrinking $A_k$ to merge the two modes also shrinks the per-mode
variance), so affine-MMOT incurs an irreducible misspecification error;
WOT is not affected by this issue.

\begin{table}[htbp]
\centering
\begin{tabular}{lrrrr}
\toprule
Method & \#params & drift MSE & mean $W_2$ & mean $\mathrm{MMD}$ \\
\midrule
All-time tanh (ours, 8p)   & 8          & $\mathbf{0.187}$  & $\mathbf{0.074}$ & $\mathbf{0.019}$ \\
All-time bilin (ours, 4p)  & 4          & $5.264$           & $0.242$          & $0.092$ \\
\midrule
MMOT affine $N{=}3$         & 6          & $0.621$           & $0.184$          & $0.090$ \\
MMOT affine $N{=}5$         & 10         & $0.868$           & $0.192$          & $0.079$ \\
MMOT affine $N{=}10$        & 20         & $1.354$           & $0.198$          & $0.068$ \\
MMOT affine $N{=}20$        & 40         & $2.336$           & $0.206$          & $0.062$ \\
\midrule
WOT $M{=}5$                & non-par.   & $2.140$           & $0.100$          & $0.027$ \\
WOT $M{=}10$               & non-par.   & $11.783$          & $0.104$          & $0.026$ \\
WOT $M{=}20$               & non-par.   & $38.692$          & $0.154$          & $0.050$ \\
WOT $M{=}50$               & non-par.   & $212.323$         & $0.231$          & $0.064$ \\
\midrule
Zero drift                 & 0          & $2.544$           & $0.756$          & $0.321$ \\
\bottomrule
\end{tabular}
\caption{Experiment~3: baseline re-assessment on the bimodal merging
flow.  Drift grid MSE is computed on $[0,1]\times[-4,4]$ against the
true drift $u^*=-2\tanh(2(1-t)x)$; mean $W_2$ and mean $\mathrm{MMD}$
(Gaussian kernel, $h=1$) are the distributional distances to $\mu_t$
averaged over $t\in\{0,0.25,0.5,0.75,1\}$.  The all-time method with
the $8$-feature tanh basis dominates both baselines on all three metrics.}
\label{tab:exp4-baselines}
\end{table}

\begin{figure}[htbp]
\centering
\includegraphics[width=\textwidth]{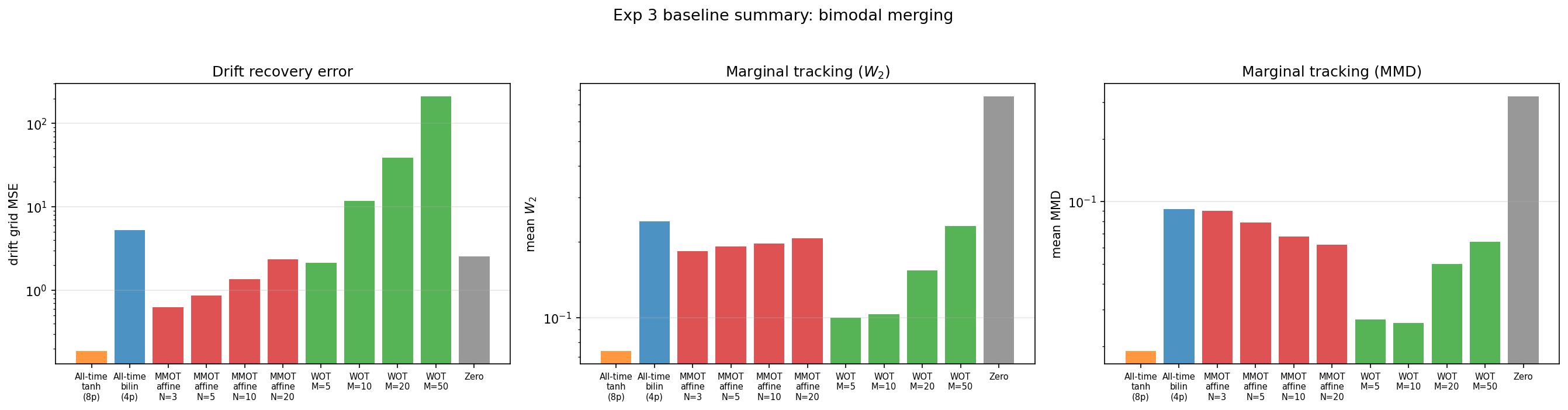}
\caption{Experiment~3 baseline summary: drift grid MSE (left),
mean marginal $W_2$ (middle), and mean $\mathrm{MMD}$ (right),
all in log scale.  The all-time tanh model is best on all three
metrics; affine MMOT saturates at an irreducible error from model
misspecification, and WOT's drift MSE explodes as $\Delta t$ shrinks.}
\label{fig:exp4-baselines-summary}
\end{figure}

Table~\ref{tab:exp4-baselines} summarizes the results, and
Figure~\ref{fig:exp4-baselines-summary} shows the same three metrics on
a log scale.  Affine MMOT
saturates at an irreducible error ($A_k\approx 0.51$--$0.58$, a
compromise that is incompatible with the bimodal-plus-fixed-variance
structure), with $W_2\approx 0.18$--$0.21$ and
$\mathrm{MMD}\approx 0.06$--$0.09$ essentially independent of $N$.  WOT
tracks the marginals well at moderate $M$ ($W_2\approx 0.10$,
$\mathrm{MMD}\approx 0.027$ at $M{=}5,10$), but its drift MSE explodes
as $\Delta t$ shrinks (exceeding $200$ at $M{=}50$), by the same
$O(M^2/N_{\text{sample}})$ amplification observed in
Section~\ref{sec:exp-roundtrip}.  We note that this drift MSE
combines two effects: (i) the $O(1/\Delta t)$ amplification of
sampling noise in finite-difference velocities reconstructed from
discrete couplings, and (ii) extrapolation off the data cloud, since
the evaluation grid $[0,1]\times[-4,4]$ includes regions where
$\mu_t$ has little or no mass.  Distributional metrics ($W_2$,
$\mathrm{MMD}$) restricted to the support degrade much more
gracefully---the qualitative comparison is unchanged but the gap to
the all-time method is smaller.  The all-time tanh model outperforms
both baselines on all three metrics (drift MSE $0.21$, $W_2=0.07$,
$\mathrm{MMD}=0.015$), while the $4$-feature bilinear all-time model is
too rigid to capture the sigmoidal drift.  Time-global variational
regularization with a well-chosen linear-in-parameters dictionary thus
offers a clearly better trade-off than pairwise-OT reconstruction for
non-Gaussian marginal dynamics.

\subsection{Experiment~4: 2d Gaussian translation}
\label{sec:exp-2d}

The preceding experiments are all in spatial dimension $d=1$.  In this
subsection we verify that the same RKHS all-time OT estimator applies
without modification to $d\ge 2$.  As discussed in
Section~\ref{sec:penalty-expansion}, the formula
$\mathcal{A}^{u}\mathcal{A}^{u'}K(y,y')
 = \bigl[-a^{2}\tau\tau' + a(1+u^{\mathsf{T}} u')\bigr]K(y,y')$,
with $\tau=\Delta t + u^{\mathsf{T}}\Delta x$, $\tau^{\prime}=\Delta t + (u^{\prime})^{\mathsf{T}}\Delta x$, $\Delta t=t-t^{\prime}$, $\Delta x=x-x^{\prime}$, and
$\Delta y=(\Delta t,\Delta x)$, is dimension-independent: only
first-order derivatives of $K$ appear, and the spatial contribution
enters only through the Euclidean inner products. Hence the $d=1$ implementation used in Experiment~1
extends to $d>1$ simply by replacing scalar products with inner
products.

Let $d=2$, $T=1$, and take
\[
  \mu_t = \mathcal{N}((-1+2t, t/2)^{\mathsf{T}},  \mathbf{I}_{d}),
  \quad t\in[0,1],
\]
where $\mathbf{I}_{d}$ denotes the $d\times d$ identity matrix (here
$d=2$). 
The Benamou--Brenier minimizing drift is the constant vector field
$u^*(t,x) = (2, 1/2)^{\mathsf{T}}$.
We use the linear-in-parameters affine dictionary
\[
  u_i(t,x) = w_{i,0} + w_{i,1}\, t + w_{i,2}\, x_1 + w_{i,3}\, x_2,
  \qquad i=1,2,
\]
with $8$ parameters in total.  The ground-truth optimum has
intercepts $(2, 1/2)$ and all other coefficients equal to $0$.

We take $h=1$, $\lambda=10^{3}$, $M=25$ time slices, $N=20$ particles
per slice, $N_0=50$ initial-distribution particles, and
ensemble-average over $K=15$ independent draws as in Experiment~1.
Optimization uses L-BFGS-B with $4$ different initializations; all
four converge to the same loss value
$\approx -211.94$.

The learned intercepts are $(2.50,0.11)$ versus the true $(2,0.5)$, all
other coefficients have magnitude at most $0.95$, and the drift grid
MSE is $\approx 0.21$.  Despite this per-coefficient residual,
Figure~\ref{fig:exp5-marginal} shows that the ODE-simulated samples
reproduce $\mu_t$ accurately: the sliced Wasserstein-$2$ distance
$\mathrm{SW}_2(\widehat\mu_t,\mu_t)$ remains below $0.14$ with mean
$\approx 0.099$, and the Gaussian-kernel MMD (with the same kernel as
the training loss, $h=1$) remains below $0.07$ with mean
$\approx 0.039$.  This is consistent with the remaining error being
concentrated in the identifiability valley of OT drifts that leaves the
marginal flow unchanged.  The experiment thus confirms that the
dimension-independent kernel operator formula yields a functional $d=2$
estimator.

\begin{figure}[t]
\centering
\includegraphics[width=\linewidth]{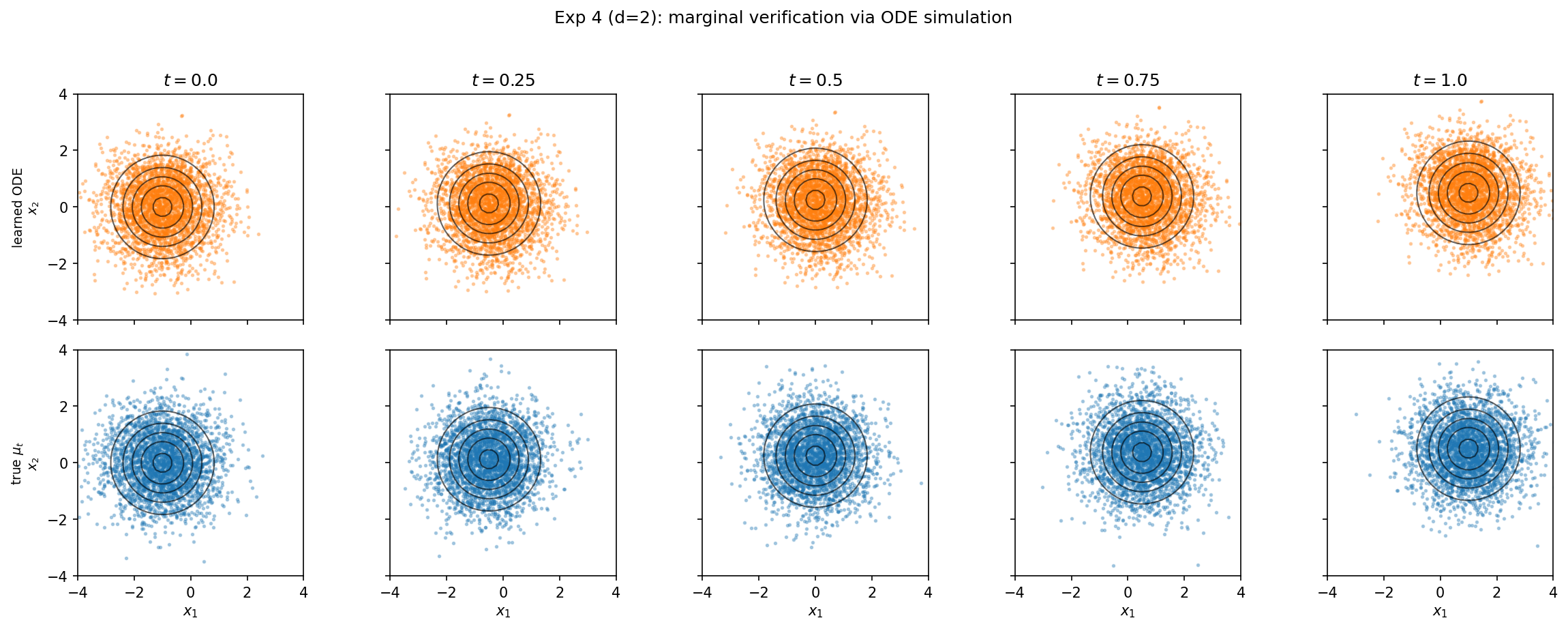}
\caption{Experiment~4: marginal verification.  Top row, ODE-simulated
particles using the learned drift $\widehat u$; bottom row,
independent reference samples from~$\mu_t$.  Black contours show the
true density of~$\mu_t$.}
\label{fig:exp5-marginal}
\end{figure}

\subsection{Experiment~5: Bimodal merging in 2D --- a time-reversed surrogate for cell-state branching}
\label{sec:exp-2d-bifurcation}

Experiment~4 demonstrated that the dimension-independent kernel
operator formula extends the estimator to $d=2$ on a simple Gaussian
translation.  In this subsection we combine two features of the
preceding experiments --- nonlinear drift structure
(Section~\ref{sec:exp-bimodal}) and higher dimension
(Section~\ref{sec:exp-2d}) --- in a bimodal merging flow in $d=2$
that serves as a surrogate for scRNA-seq cell-state branching.
Note that the temporal direction here is the time-reverse of typical
biological branching: in our setup the bimodal initial distribution
($\pm 2$) merges to a unimodal centered cloud as $t\to T$, whereas
biological branching has unimodal progenitors diverging to multiple
fates.  Because the deterministic continuity equation
$\partial_t p + \nabla\!\cdot(up) = 0$ is time-reversal symmetric
under $t'=T-t$, $u'(t',x)=-u(T-t',x)$, the merging flow studied here
is mathematically equivalent to a branching flow under
reparametrization.  The estimator's ability to capture nonlinear
drift, identifiability valleys, and bimodal geometry therefore
transfers directly to the branching direction.

Let $d=2$, $T=1$, and
\[
  \mu_t(x_1, x_2)
    = \Bigl[\tfrac12\mathcal{N}(-2+2t,\,1) + \tfrac12\mathcal{N}(2-2t,\,1)\Bigr](x_1)
      \cdot \mathcal{N}(0,\,1)(x_2).
\]
The $x_1$-marginal is the Exp~3 bimodal merge, and the $x_2$-marginal
is the standard normal for all $t$.  Because the $x_2$-marginal is time-invariant, the Benamou--Brenier minimizing
drift is
\[
  u_1^{\!*}(t,x) = -2\tanh\bigl(2(1-t)\,x_1\bigr),
  \qquad u_2^{\!*}(t,x) = 0.
\]

We compare three linear-in-parameters dictionaries that use the
\emph{same} feature set for both output components:
\begin{itemize}
  \item[(A)] \textbf{Affine} --- $[1,\,t,\,x_1,\,x_2]$, $8$~params.
  \item[(B)] \textbf{Bilinear} --- $[1,\,t,\,x_1,\,x_2,\,tx_1,\,tx_2]$, $12$~params.
  \item[(C)] \textbf{Tanh} --- $[1,\,t,\,x_1,\,x_2,\,tx_1,\,tx_2,\,
             \tanh x_1,\,t\tanh x_1,\,\tanh 2x_1,\,t\tanh 2x_1]$, $20$~params.
\end{itemize}
Model~(C) contains the true drift in component~$u_1$ (up to a linear
combination) and requires the estimator to learn that component~$u_2$ is
identically zero.

$h=1$, $\lambda=3\times 10^{3}$, $M=25$ time slices, $N=25$ particles
per slice, $N_0=60$ initial particles, $K=10$ ensemble replicates.
Optimization uses L-BFGS-B with multiple initializations.

Table~\ref{tab:exp6-mse} reports the drift grid MSE on
$[0,1]\times[-4,4]\times[-3,3]$, separated by output component, together
with the marginal-tracking metrics $\mathrm{SW}_2$ and $\mathrm{MMD}$
from a $4{,}000$-particle Euler simulation.
\begin{table}[h]
\centering
\begin{tabular}{lccccc}
\toprule
 Model    & total MSE & $u_1$ MSE & $u_2$ MSE & mean $\mathrm{SW}_2$ & mean $\mathrm{MMD}$ \\
\midrule
 Zero                    & $2.548$ & $2.548$ & $0.000$ & --- & --- \\
 Affine   ($8$ params)   & $\mathbf{2.182}$ & $\mathbf{2.126}$ & $\mathbf{0.057}$ & $0.122$ & $0.052$ \\
 Bilinear ($12$ params)  & $8.913$ & $8.021$ & $0.892$ & $0.162$ & $0.087$ \\
 Tanh     ($20$ params)  & $7.112$ & $5.192$ & $1.920$ & $\mathbf{0.121}$ & $\mathbf{0.050}$ \\
\bottomrule
\end{tabular}
\caption{Experiment~5 (bimodal merge in $d=2$): drift grid MSE against
$u^{\!*}=(-2\tanh 2(1{-}t)x_1,\,0)$ and marginal-tracking metrics
(sliced $W_2$ and Gaussian-kernel MMD, averaged over the five
evaluation times $t\in\{0, 0.25, 0.5, 0.75, 1\}$ of a $4{,}000$-particle
Euler simulation).  Affine minimizes drift MSE by settling on the best
linear approximation of $u_1^{\!*}$; tanh attains the same marginal
accuracy but incurs a larger $u_2$-component MSE within the
identifiability valley.}
\label{tab:exp6-mse}
\end{table}

The three dictionaries trade drift accuracy against marginal accuracy in
distinct ways.  The affine model attains the lowest drift grid MSE
($2.18$) by tracking the best linear approximation of $u_1^{\!*}$ near
$x_2=0$ while leaving $u_2\approx 0$, and already matches the marginal
flow well (mean $\mathrm{SW}_2=0.122$, $\mathrm{MMD}=0.052$).  The tanh
dictionary is the only class that can reproduce the nonlinear shape of
$u_1^{\!*}$: Figure~\ref{fig:exp6-drift} shows its $u_1$-component
(orange) follows the sigmoid profile (black dashed) while the
affine and bilinear fits remain near their linear approximation.  In
exchange, the extra degrees of freedom spill into the $u_2$-component,
whose MSE is $\approx 1.92$, inflating the total to $7.11$, while
marginal tracking is essentially unchanged from the affine fit
(mean $\mathrm{SW}_2=0.121$, $\mathrm{MMD}=0.050$).

The constraint~\eqref{eq:weak-CE} is affine in $u$ and $u^{\!*}$
satisfies it, so $u^{\!*}+v$ lies in $\mathcal{U}_0$ exactly when
$\mathrm{div}(p\,v)=0$.  We call such a $v$ feasible, and adding one
leaves every $\mu_t$ unchanged.  Stationarity of the $x_2$-marginal
does \emph{not} by itself make a perturbation feasible.  For
$v=(0,\partial_{x_2}\psi)$ the condition reads
$\partial_{x_2}\bigl(p\,\partial_{x_2}\psi\bigr)=0$, and with the decay
of $p$ it forces $\partial_{x_2}\psi\equiv 0$ on $\{p>0\}$.  So such
perturbations are \emph{not} feasible here.  One family of exactly
feasible perturbations in this geometry is the rotational fields
$v=R(x-m_t)$ with $R$ skew, for which
$\mathrm{div}(v)=\mathrm{tr}(R)=0$ and
$\nabla p\cdot v=-p\,(x-m_t)^{\mathsf T}R(x-m_t)=0$, so
$\mathrm{div}(p\,v)=0$.  The observed $u_2$ error is therefore not a
feasible perturbation.  It is a direction that the penalty constrains
only weakly at finite $\lambda$ and finite sample size.  This is why
the marginal metrics barely move while the drift MSE grows.  The
bilinear class is dominated on every metric, and for a different
reason.  Without the tanh nonlinearity it cannot represent the shape
of $u_1^{\!*}$, so its $u_1$ error rises to $8.02$ against $2.13$ for
the affine fit.  That component is the one the marginals see, and the
marginal metrics register the loss ($\mathrm{SW}_2=0.162$,
$\mathrm{MMD}=0.087$).  The four extra parameters therefore do not
improve the fit in the direction that the marginals constrain.

\paragraph{Drift MSE versus marginal accuracy across experiments.}
\label{par:valley-summary}
The relationship between drift grid MSE and marginal-tracking error
varies systematically with the dimension and with the geometry of the
marginal flow.  Two levels must be kept apart.  The \emph{feasible}
set $\mathcal{U}_0$ contains $u^{\!*}+v$ for every $p$-weighted
divergence-free perturbation, $\mathrm{div}(p\,v)=0$, and in $d\ge2$
that set is non-trivial.  The \emph{minimizer} is nevertheless unique,
by the convexity argument in Step~(iv) of the proof of
Theorem~\ref{thm:convergence}.  When
$u^{\!*}$ is a gradient the cross term
$\int p\,(u^{\!*})^{\mathsf T}v$ vanishes, so
$\int p\,|u^{\!*}+v|^2=V_0(\mu)+\int p\,|v|^2>V_0(\mu)$ for $v\neq0$.
There is therefore no population valley among minimizers, only among
feasible drifts.  At the population level $u^{\!*}$ and $u^{\!*}+v$ differ in energy by
$\int p\,|v|^2$.  With finite samples and finite~$\lambda$ the
empirical objective resolves that difference only down to its own
sampling error, so where $\int p\,|v|^2$ is small it cannot prefer
$u^{\!*}$, and the estimate moves along $v$.  The size of the perturbation set depends on
the marginal geometry.
\begin{itemize}
  \item In \emph{one dimension}
        (Sections~\ref{sec:exp-gaussian}--\ref{sec:exp-bimodal}),
        $\mathrm{div}(p\,v)=0$ together with integrability and the
        decay of $p$ forces $v\equiv 0$.  The population valley
        collapses, so drift MSE and marginal $W_2$ are tightly coupled
        and small drift error implies small marginal error.
  \item In the \emph{2D translation} setting
        (Section~\ref{sec:exp-2d}), the valley contains rotational
        perturbations $v=R(x-m_t)$ around the moving mean ($R$ skew),
        which the kinetic term suppresses but does not annihilate at
        finite~$\lambda$.  This explains the small but persistent
        cross-coordinate residual in $\widehat W$ alongside excellent
        marginal accuracy.
  \item In the \emph{2D bimodal merging} setting (this experiment), the
        $x_2$-marginal is standard normal at every $t$, so
        $\partial_{x_2}\psi$-type perturbations are admissible to high
        order, giving a much wider valley.  This is the regime where
        a flexible dictionary inflates drift MSE without affecting
        marginal $W_2$.
  \item In the \emph{dimension-scaling} sweep
        (Appendix~\ref{sec:exp-dim-scaling}), the valley dimension
        grows with~$d$, producing the per-component MSE plateau at
        roughly $0.7/d$ across $d\ge 2$.
\end{itemize}
The overall picture is that drift grid MSE under-reports estimator
quality whenever the marginal geometry admits a non-trivial valley.
Marginal-tracking metrics ($W_2$, sliced $W_2$, MMD) provide the
complementary measure that the variational objective actually
penalizes, and the two should be read together.
Figure~\ref{fig:exp6-marginals} shows the simulated marginals of the
tanh model against independent reference samples.

\begin{figure}[t]
\centering
\includegraphics[width=\linewidth]{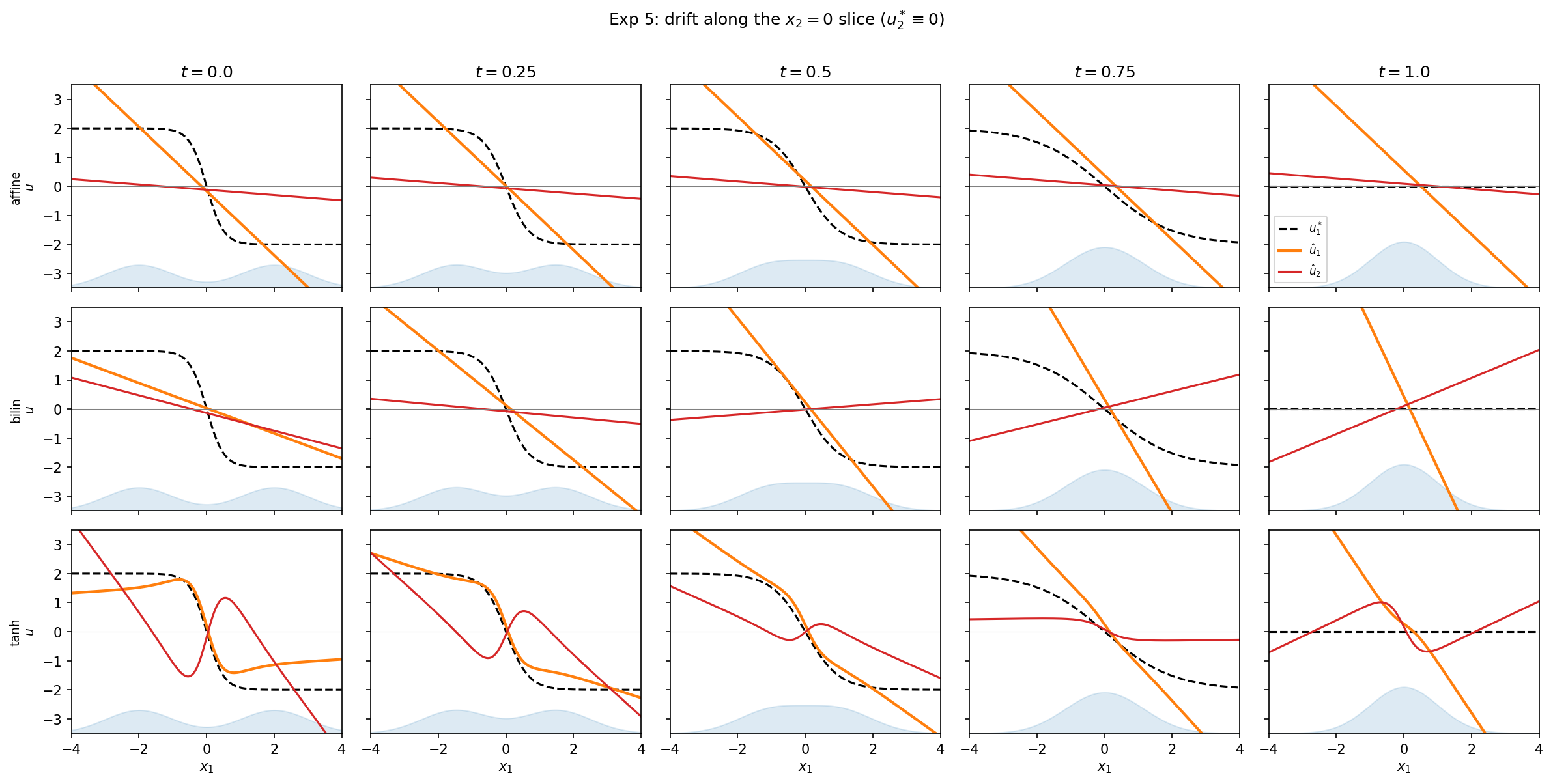}
\caption{Experiment~5: learned drift along the $x_2=0$ slice at five
times for the three model classes.  Black dashed line: true $u_1^{\!*}$;
orange: learned $\hat u_1$; red: learned $\hat u_2$.  Shaded region
shows the $x_1$-marginal density.  Only the Tanh dictionary recovers
the merging (time-reversed branching) structure.}
\label{fig:exp6-drift}
\end{figure}

\begin{figure}[t]
\centering
\includegraphics[width=\linewidth]{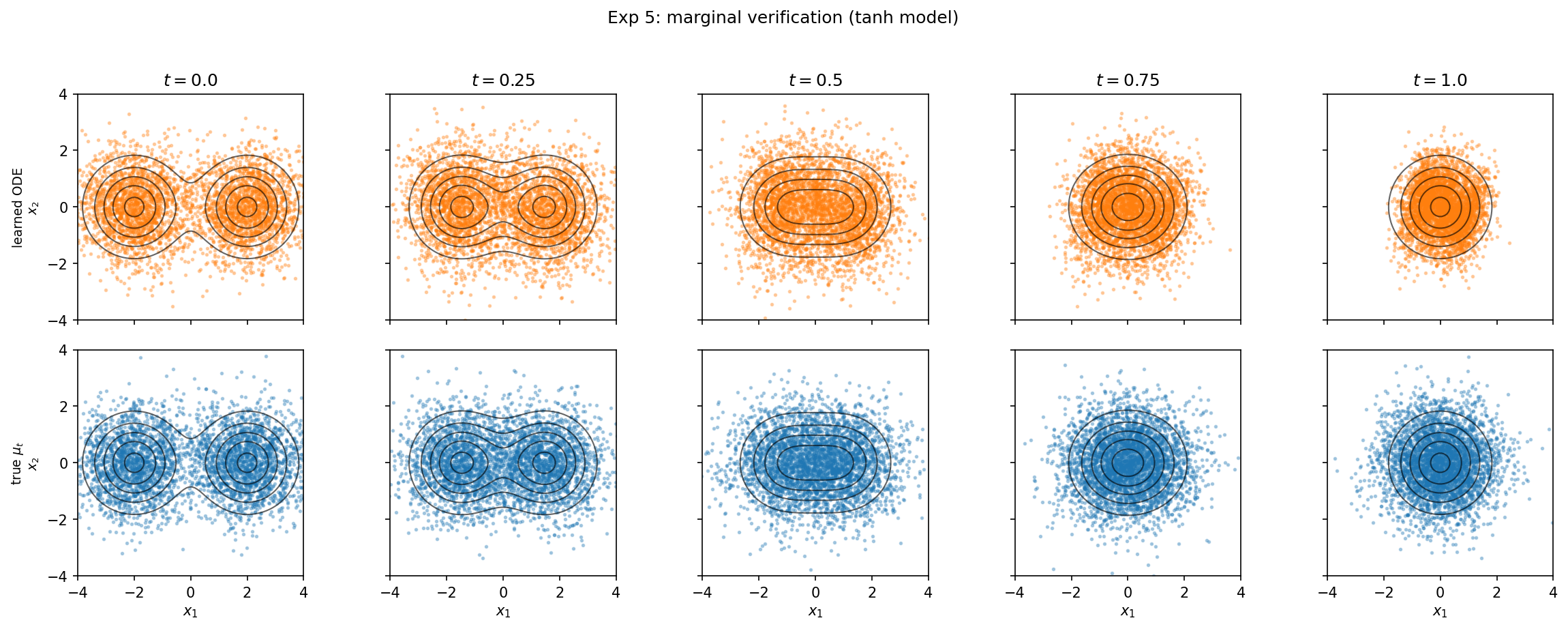}
\caption{Experiment~5 (tanh model): marginal verification.  Top row,
ODE-simulated particles under $\hat u$; bottom row, independent
reference samples from $\mu_t$; black contours are the true joint
density.}
\label{fig:exp6-marginals}
\end{figure}

\subsection{Experiment~6: real-data application --- single-cell trajectory inference}
\label{sec:exp-eb}

The deterministic experiments above are synthetic.  As a real-data
application we consider single-cell trajectory inference on the
embryoid body (EB) scRNA-seq dataset of Moon et
al.~\citep{Moon2019}, the canonical benchmark for the Waddington-OT
framework~\citep{Schiebinger2019}.  The dataset comprises five
snapshots of human embryoid body development at days 0--3, 6--9,
12--15, 18--21 and 24--27; we map these to
$t\in\{0,\,0.25,\,0.5,\,0.75,\,1\}$.  After standard preprocessing
(QC, normalization, log-transformation, top-2000 highly-variable
genes, PCA to $d=30$ components) and subsampling to $1{,}500$ cells
per time point, we obtain $7{,}500$ samples in
$\mathbb{R}^{30}$.

The all-time estimator uses an MLP drift
($1{+}d\to 96\to 96\to d$ with tanh activations, $15{,}294$
parameters), Adam ($8{,}000$ iterations, cosine LR
$5\!\times\!10^{-4}\to 5\!\times\!10^{-6}$, $\lambda=10^{4}$,
$h=8$, $M=4$, $N=80$, $K_{\mathrm{ens}}=25$).
We evaluate against the pairwise Waddington-OT baseline on two axes,
held-out marginal tracking in the canonical neighbour-interpolation
setting and bootstrap stability of the inferred velocity field.

\paragraph{Marginal-tracking accuracy (interpolation regime).}
We hold out the middle time point $t=0.5$ (day 12--15) and train
on the remaining four marginals
$\{\mu_0,\mu_{0.25},\mu_{0.75},\mu_1\}$.  Each method predicts
the held-out day-15 marginal using only training data from the two
adjacent training time points, day~9 and day~21.  Concretely:
\begin{itemize}
  \item \textbf{All-time (ours)}: forward-simulate the learned
        velocity field $\hat u$ by Euler ODE integration from a
        day-9 sample at $t=0.25$ to $t=0.5$ (one interval, $100$
        steps).
  \item \textbf{Waddington-OT} (entropic OT, log-domain
        Sinkhorn~\citep{Cuturi2013}): compute the Sinkhorn coupling
        between the day-9 and day-21 clouds and apply McCann
        (displacement) interpolation at fraction
        $\tau=(15-9)/(21-9)=0.5$.  This is the canonical WOT
        use case for inferring an intermediate marginal between two
        observed snapshots.
  \item \textbf{Zero drift}: cells remain at their day-9 position.
\end{itemize}
All methods use $1{,}500$ cells.  Unlike the synthetic experiments of
Sections~\ref{sec:exp-gaussian}--\ref{sec:exp-2d-bifurcation}, the
EB benchmark provides no ground-truth drift, so we report only
marginal-tracking metrics; this is the standard evaluation for
trajectory inference in single-cell
applications~\citep{Schiebinger2019}.
Marginal accuracy at the held-out day~15 is measured by the
sliced Wasserstein-2 distance ($\mathrm{SW}_2$, $200$ random
projections) and the Gaussian-kernel MMD (bandwidth equal to the
median pairwise distance of the held-out cloud) against an
independent $1{,}500$-cell reference draw from day~15.  As a
Monte-Carlo floor we report the same metrics evaluated between two
disjoint halves of the held-out sample.

\begin{table}[htbp]
\centering
\caption{Experiment~6, interpolation regime: held-out day-15
marginal-tracking metrics on the EB scRNA-seq benchmark ($d=30$).
All methods use the same $1{,}500$-cell day-9 starting sample (all-time,
zero) or day-9/day-21 sample pair (WOT).  Lower is better; the
Monte-Carlo floor is the irreducible finite-sample error.}
\label{tab:exp-eb}
\begin{tabular}{lcc}
\toprule
Method                         & $\mathrm{SW}_2$ & $\mathrm{MMD}$ \\
\midrule
Monte-Carlo floor (true split) & $0.429$         & $0.000$        \\
All-time (ours, MLP)           & $1.003$         & $0.030$        \\
Waddington-OT (Sinkhorn)       & $\mathbf{0.775}$ & $\mathbf{0.025}$ \\
Zero drift                     & $1.092$         & $0.058$        \\
\bottomrule
\end{tabular}
\end{table}

Waddington-OT achieves the best marginal-tracking accuracy in this
regime, with $\mathrm{SW}_2 = 0.775$ versus $1.003$ for our method.
This is the regime in which Sinkhorn-WOT is structurally favoured:
a single coupling between two adjacent observed clouds is exactly
the pairwise-OT primitive on which WOT is built, and McCann
interpolation at $\tau=0.5$ delivers a near-optimal displacement
midpoint.  Both methods substantially improve over the zero-drift
baseline, indicating that the inferred dynamics are informative.
Figure~\ref{fig:exp-eb} shows the predicted and held-out clouds.

\paragraph{Discussion of the marginal-tracking comparison.}
On this benchmark, pairwise WOT and the all-time estimator differ
not primarily in marginal-tracking sharpness but in what each
method produces.  Waddington-OT returns a discrete coupling between
the two snapshots used at evaluation time and achieves the best
marginal-tracking accuracy when those snapshots bracket the
held-out time; querying a different time point requires re-solving
an OT problem.  The all-time estimator returns a single
\emph{continuous} velocity field $\hat u(t,x)$ defined on
$[0,T]\times\mathbb{R}^{d}$, which can be evaluated, simulated and
differentiated at any $(t,x)$ without re-solving an OT problem.  On
the neighbour-interpolation regime this continuous parametrization
comes at a modest cost in marginal accuracy ($\mathrm{SW}_2$ gap
of $\sim 0.23$ to WOT); on the synthetic experiments of
Sections~\ref{sec:exp-roundtrip}
and~\ref{sec:exp-bimodal}--\ref{sec:exp-2d-bifurcation}, where
ground-truth drift is available, the all-time estimator recovers
$u^{\!*}$ accurately whereas pairwise WOT cannot
(Section~\ref{sec:exp-roundtrip}, Table~\ref{tab:exp4-baselines}).
The EB data carry no ground-truth drift, so the next paragraph reports
a counterpart that needs none, the bootstrap stability of the inferred
velocity.

\paragraph{Bootstrap stability of the inferred velocity field.}
A stable estimator returns nearly the same velocity field when the
training data are resampled.  We refit both methods on
$K_{\mathrm{boot}}=20$ independent $80\%$-subsamples of the five
training snapshots and evaluate them at a fixed set of $1000$ query
points $(t_q,x_q)$, with $t_q$ uniform on $[0.05,0.95]$ and $x_q$
drawn from the data cloud.  We report the
per-query across-bootstrap standard deviation
$s_q:=\bigl(\sum_{d\le 30}\mathrm{Var}_k\,\hat u_k^d(t_q,x_q)\bigr)^{1/2}$
and, because the two fields differ in magnitude, the scale-free ratio
$\mathrm{rsd}_q:=\|\mathrm{std}_k\,\hat u_k\|/\|\mathrm{mean}_k\,\hat u_k\|$.
Waddington-OT is evaluated in two ways, by the nearest-neighbour rule
used in the original diagnostic and by a Nadaraya-Watson smoothing of
the same barycentric displacements with the median-distance bandwidth,
which produces a continuous field like the network and so isolates the
statistics of the estimator from the interpolation rule.

\begin{table}[htbp]
\centering
\caption{Experiment~6: bootstrap stability on EB over
$K_{\mathrm{boot}}=20$ resamples, median and interquartile range over
$1000$ queries.  Lower is more stable.  The raw dispersion $s_q$
follows the field magnitude.  On the scale-free ratio $\mathrm{rsd}_q$
the all-time estimator and nearest-neighbour Waddington-OT are close,
and smoothed Waddington-OT is the most stable.}
\label{tab:exp-eb-stability}
\begin{tabular}{lcccc}
\toprule
Method & median $s_q$ & IQR of $s_q$ & median $\mathrm{rsd}_q$ & mean $\|\hat u\|$ \\
\midrule
All-time (ours, MLP)             & $5.49$ & $[4.83,6.28]$  & $0.339$ & $16.49$ \\
Waddington-OT, nearest neighbour & $8.60$ & $[6.37,12.66]$ & $0.364$ & $28.63$ \\
Waddington-OT, smoothed          & $\mathbf{0.91}$ & $[0.76,0.93]$ & $\mathbf{0.048}$ & $17.84$ \\
\bottomrule
\end{tabular}
\end{table}

Against the nearest-neighbour rule, the all-time field has the
smaller raw dispersion, median $s_q=5.49$ against $8.60$.  This
ordering follows the field magnitudes, $16.49$ against $28.63$.  On
the scale-free ratio the two fields are close, $0.339$ against
$0.364$, so the all-time estimator is about as stable as
nearest-neighbour Waddington-OT.  The smoothed Waddington-OT field is
more stable than both by an order of magnitude, with median
$s_q=0.91$ and $\mathrm{rsd}_q=0.048$ at the comparable magnitude
$17.84$.

The diagnostic therefore measures the interpolation rule rather than
the estimator.  The nearest-neighbour rule is unstable under resampling and the
smoothed rule is stable.  The global continuous
parametrization of the all-time estimator adds no stability of its
own.

The stability of the smoothed field is the stability of a sample
mean.  Table~\ref{tab:exp-eb-smooth} varies the bandwidth of the
smoothing in the interpolation setting of Table~\ref{tab:exp-eb}.  As
the bandwidth grows, the spatial variation of the field disappears.
At the median-distance bandwidth it carries $0.9\%$ of the field
energy, so the field moves every day-9 cell by almost the same vector,
and the pure translation by the mean displacement has the same
held-out scores and the same stability.  Held-out tracking on EB does
not detect this loss of structure.  The nearest-neighbour rule, the
smoothed field and the translation all reach $\mathrm{SW}_2$ between
$0.76$ and $0.79$, the translation has the best MMD, and the all-time
estimator has $1.00$.  The held-out marginal at day 15 is therefore
explained by the bulk displacement of the cloud, and the metric is
insensitive to the velocity assigned to individual cells.  This is the
real-data form of the gap between marginal tracking and drift recovery
observed in Sections~\ref{sec:exp-bimodal}
and~\ref{sec:exp-2d-bifurcation}.

The bimodal flow of Section~\ref{sec:exp-bimodal} has a known drift, so
the accuracy of the smoothed field can be measured there.  Smoothing
the Waddington-OT displacements at the median-distance bandwidth
lowers the drift grid MSE from $1.98$ to $0.92$ at $M=5$, averaged
over five sampling seeds.  That is five times the all-time error
$0.187$ of Table~\ref{tab:exp4-baselines}, and for $M\ge10$ the
smoothed error exceeds the zero-drift error $2.54$, reaching $3.4$,
$14.1$ and $84$ at $M=10$, $20$ and $50$.  Smoothing therefore buys
stability with a field that is close to constant in space on EB and
far from the true drift on the synthetic flow.

\begin{table}[htbp]
\centering
\caption{Experiment~6: held-out accuracy and bootstrap stability of
the smoothed Waddington-OT field as a function of the smoothing
bandwidth $h=c\,h_{\mathrm{med}}$, where $h_{\mathrm{med}}=13.3$ is the
median pairwise distance among day-9 cells.  The coupling is
Sinkhorn(day 9, day 21) as in Table~\ref{tab:exp-eb}, and the
prediction of day 15 moves each day-9 cell by half its smoothed
displacement.  The spatial fraction is the share of the field energy
carried by its spatial variation.  Stability is measured over
$K_{\mathrm{boot}}=20$ resamples of $80\%$ of the day-9 and day-21
cells at $1000$ fixed day-9 query cells.  The row $c=0$ is the
nearest-neighbour rule and reproduces the Waddington-OT row of
Table~\ref{tab:exp-eb}.  The translation row moves every cell by the
mean displacement.  The last two rows repeat Table~\ref{tab:exp-eb}
for reference.}
\label{tab:exp-eb-smooth}
\begin{tabular}{lccccc}
\toprule
Field & $\mathrm{SW}_2$ & $\mathrm{MMD}$ & spatial fraction & median $\mathrm{rsd}_q$ & median $s_q$ \\
\midrule
$c=0$, nearest neighbour & $0.775$ & $0.025$ & $0.64$  & $0.203$ & $3.67$ \\
$c=0.2$                  & $0.762$ & $0.024$ & $0.58$  & $0.060$ & $0.90$ \\
$c=0.5$                  & $0.768$ & $0.022$ & $0.21$  & $0.032$ & $0.44$ \\
$c=1$, rule of Table~\ref{tab:exp-eb-stability} & $0.788$ & $0.020$ & $0.009$ & $0.033$ & $0.45$ \\
translation              & $0.785$ & $0.019$ & $0$     & $0.035$ & $0.48$ \\
\midrule
All-time (ours, MLP)     & $1.003$ & $0.030$ & --      & --      & -- \\
Zero drift               & $1.092$ & $0.058$ & --      & --      & -- \\
\bottomrule
\end{tabular}
\end{table}

\paragraph{Comparison with the global trajectory-inference
framework of Lavenant et al.}
A complementary global approach to pairwise WOT is the framework
of Lavenant et al.~\citep{lav:2024}, called gWOT.  gWOT minimizes
relative entropy with respect to a Brownian reference measure
under all-time-marginal constraints.  Unlike pairwise WOT, gWOT
is globally defined and does not share the bracketing
limitation.  It applies at held-out time points outside any
training-snapshot pair.  The structural separation between gWOT
and the all-time RKHS estimator is along the noise axis rather
than the bracketing axis.  gWOT is formulated as a
Schr\"odinger bridge with positive noise level $\sigma > 0$ and
does not reduce to the deterministic case.  Our method is
designed for $\sigma = 0$, that is Problem~$(P)$.  To our
knowledge it is the only available solver in this regime.
Section~\ref{sec:stochastic} discusses how the two approaches
address complementary problem classes by the value of $\sigma$.
A direct numerical comparison on EB requires either running gWOT
at small $\sigma$ or extending the present estimator via the
stochastic framework, and is left to future work.

\begin{figure}[htbp]
\centering
\includegraphics[width=\linewidth]{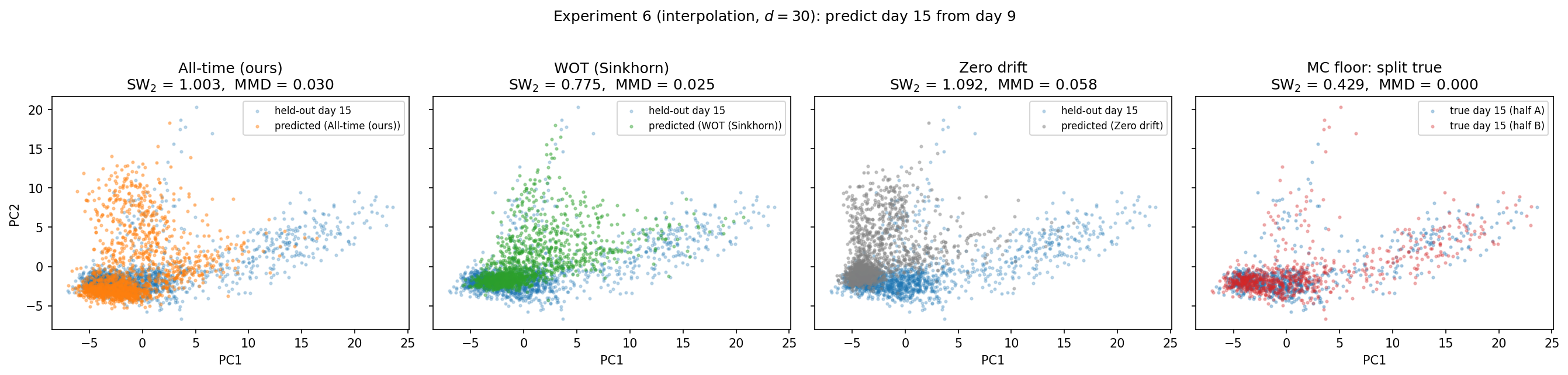}
\caption{Experiment~6: held-out day-15 prediction on the EB
scRNA-seq benchmark, projected onto the first two PCA components,
in the neighbour-interpolation setting.  Each panel overlays the
predicted day-15 cloud (colored) on the held-out true day-15 cloud
(blue).  Right-most panel shows two disjoint halves of the held-out
sample as the Monte-Carlo floor reference.}
\label{fig:exp-eb}
\end{figure}

\subsection{Stochastic extension}
\label{sec:exp-stochastic}

The convergence analysis in Theorem~\ref{thm:convergence} is
established for the deterministic case $\sigma=0$; the corresponding theory for the
Nelson problem ($\sigma>0$) is beyond the scope of this paper and is
left for future work.  The present subsection does \emph{not} serve
as a numerical verification of a convergence theorem.  Instead, its
purpose is twofold: (a)~to demonstrate that the computational
estimator of Section~\ref{sec:penalty-expansion} extends without
structural modification to the stochastic case, requiring only that
the operator $\mathcal{A}^u$ be augmented with the $(\sigma^2/2)\Delta$
term and that the forward ODE simulation be replaced by an
Euler--Maruyama SDE simulation; and (b)~to provide a direct comparison
with the deterministic experiments of
Sections~\ref{sec:exp-gaussian}--\ref{sec:exp-2d-bifurcation} on a
setting where a closed-form optimal drift is available.  We do not
repeat the WOT/MMOT benchmarks here, as the $O(1/\Delta t^2)$ noise
amplification of pairwise Sinkhorn combined with finite-difference
drift reconstruction is independent of $\sigma$.  A detailed
multi-dimensional study with real-data applications is deferred to a
follow-up paper.

We validate the RKHS-penalized estimator on the one-dimensional Nelson
problem with $T=1$, $d=1$, diffusion coefficient $\sigma=1$, and
marginal flow $\mu_t=\mathcal{N}(m_t,1)$ with $m_t=-1+2t$.  For this
setup the optimal drift is
\[
  u^*(t,x) = -\frac{x}{2} + \frac{3}{2} + t,
\]
with weight vector $w^*=(3/2,\,1,\,-1/2)$ in the affine
parametrization $u(t,x)=w_0+w_1 t+w_2 x$.

We use the affine model $u(t,x)=w_0+w_1 t+w_2 x$ ($3$ parameters,
convex objective) trained with Adam ($15{,}000$ iterations, cosine LR
$5\times10^{-4}\to 5\times10^{-5}$, $\lambda=1000$, $M=N=30$,
$N_0=60$, $K_{\mathrm{ens}}=30$, $h=1$).
Figure~\ref{fig:drift} shows the learned drift at five time slices;
the final weight vector is $\hat w=(1.339,1.362,-0.579)$ versus
$w^*=(1.5,1.0,-0.5)$, giving grid MSE $=0.031$ on
$[0,1]\times[-3,3]$.  The finite-$\lambda$ bias can be computed directly in this setting.
Write $\Phi(t,x)=(1,t,x)^{\mathsf T}$ and
\[
  G:=\int_0^T\!\!\int_{\mathbb{R}}\Phi\Phi^{\mathsf T}\,p\,dx\,dt
   =\begin{pmatrix}1 & 1/2 & 0\\ 1/2 & 1/3 & 1/6\\ 0 & 1/6 & 4/3\end{pmatrix}
\]
for the present marginal flow, so that the energy term is
$\int_0^T\!\int_{\mathbb{R}}|u_w|^2p\,dx\,dt=w^{\mathsf T}Gw$.  Integration by parts
in the definition of $H_t^u$ gives
\[
  H_t^{u}(s,x)
  = -\int_0^t\!\!\int_{\mathbb{R}}
    \Bigl(\partial_r p+\partial_y(pu)-\tfrac12\partial_y^2p\Bigr)(r,y)\,
    K(s,x,r,y)\,dy\,dr ,
\]
and for $p(r,y)=\varphi(y-m_r)$, with $\varphi$ the standard normal
density, and the affine $u_w$ the Fokker--Planck residual factors as
$p\,q_w$ with
\[
  q_w(r,y)=\Bigl(w_2+\tfrac12\Bigr)(1-z^2)
  +\bigl[\dot{m}_r-u_w(r,m_r)\bigr]\,z,
  \qquad z=y-m_r,
\]
which is linear in $w-w^*$ and vanishes for every $r$ exactly when
$w=w^*$.  The penalty is therefore the quadratic form
$\int_0^T\|H_t^{u_w}\|^2dt=(w-w^*)^{\mathsf T}B\,(w-w^*)$, and $B$ is
positive definite because a zero penalty forces $q_w\equiv0$ by (A1).
The objective
$J_\lambda(u_w)=w^{\mathsf T}Gw+\lambda(w-w^*)^{\mathsf T}B(w-w^*)$
has the unique minimizer
\[
  w_\lambda=w^*-\frac{1}{\lambda}\Bigl(\frac{1}{\lambda}G+B\Bigr)^{-1}Gw^* .
\]
The bias is of order $1/\lambda$, its direction is set by $Gw^*$ and by
the curvature $B$ of the penalty, and it vanishes as $\lambda\to\infty$.
This is a statement about the population objective for the affine
class of this experiment.  Theorem~\ref{thm:convergence} is stated for
$\sigma=0$ and is not used here.

\begin{figure}[htbp]
\centering
\includegraphics[width=\textwidth]{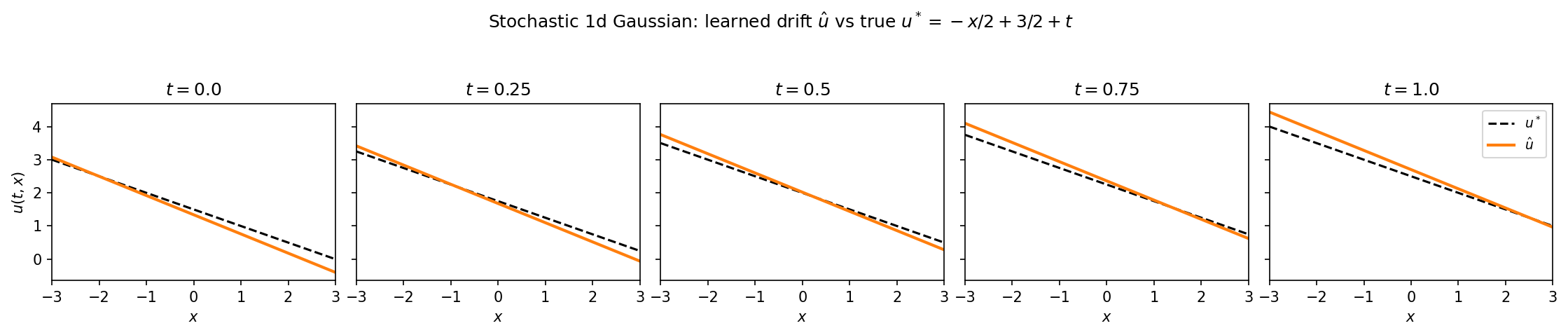}
\caption{Stochastic 1d Gaussian: learned drift $\hat u(t,x)$ (solid)
vs.\ true $u^*(t,x)=-x/2+3/2+t$ (dashed) at
$t\in\{0,0.25,0.5,0.75,1\}$.  Affine model, $\lambda=1000$, $M=N=30$.}
\label{fig:drift}
\end{figure}

To verify that the learned drift reproduces the prescribed marginal
flow as a generative model, we simulate $20{,}000$ particles from
$X_0\sim\mathcal{N}(-1,1)$ under the Euler--Maruyama scheme 
with $2{,}000$ steps.  Table~\ref{tab:marginal} reports the
Wasserstein-$2$ distance and the Gaussian MMD ($h=1$) against
$N=20{,}000$ independent samples from the true marginal
$\mu_t=\mathcal{N}(-1+2t,1)$, averaged and maximized over
$t\in\{0,0.25,0.5,0.75,1\}$.  We report $W_2$ for consistency with
Sections~\ref{sec:exp-gaussian}--\ref{sec:exp-2d-bifurcation}: both the
deterministic problem and the Nelson problem are quadratic-cost
Benamou--Brenier energies, so $W_2$ is the natural metric for the
objective.  The learned drift attains
$\mathrm{mean}\,W_2=0.036$ and $\mathrm{max}\,W_2=0.056$ over the five
evaluation times---about $0.02$ above the Monte Carlo floor achieved
by the true drift---while the zero-drift baseline (plain Brownian
motion) diverges linearly in $t$ and reaches $W_2\approx 2.04$ at
$t=1$.

\begin{table}[htbp]
\centering
\caption{Stochastic 1d Gaussian: marginal-consistency metrics over
$t\in\{0,0.25,0.5,0.75,1.0\}$.
$N=20{,}000$ particles, $2{,}000$ Euler--Maruyama steps.
$W_2$ via sorted order statistics; MMD with Gaussian kernel, $h=1$.}
\label{tab:marginal}
\begin{tabular}{lcccc}
\toprule
Drift & mean $W_2$ & max $W_2$ & mean MMD & max MMD \\
\midrule
True $u^*$       & 0.0171 & 0.0200 & 0.0034 & 0.0136 \\
Learned $\hat u$ & 0.0357 & 0.0560 & 0.0082 & 0.0362 \\
Zero drift       & 1.0230 & 2.0371 & 0.3544 & 0.6582 \\
\bottomrule
\end{tabular}
\end{table}

\subsection{Comparison of the two error metrics}
\label{sec:two-metrics}

Table~\ref{tab:two-metrics} evaluates both metrics of
Section~\ref{sec:numerical} on the experiments whose model is linear
in its parameters, where~\eqref{eq:pweighted} is exact.

\begin{table}[htbp]
\centering
\caption{Grid $\mathrm{MSE}$ against the probability-weighted error
\eqref{eq:pweighted} for the linear-in-parameters experiments, where
the latter is exact.  The uniform grid inflates the error by a factor
of three to four when the misfit has an $x$-component, and the two
agree in Experiment~2, where the recovered drift is independent of
$x$.}
\label{tab:two-metrics}
\begin{tabular}{lccc}
\toprule
 & grid $\mathrm{MSE}$ & $\mathcal{D}(\hat u,u^*)$ & ratio \\
\midrule
Experiment~1 (Gaussian translation) & $0.044$ & $0.010$ & $4.2$ \\
Experiment~2 (roundtrip)            & $0.635$ & $0.634$ & $1.0$ \\
Experiment~7 (stochastic, $\sigma=1$) & $0.031$ & $0.010$ & $3.1$ \\
\bottomrule
\end{tabular}
\end{table}

The gap is a property of the metric rather than of the estimator.  In
Experiment~7, for instance, the grid $[0,1]\times[-3,3]$ devotes a
third of its area to $|x|>2$, which carries under $5\%$ of the mass of
$\mu_t$.

\section{Conclusion}
\label{sec:conclusion}

We have proposed a practical mesh-free solver for the
continuum-marginal optimal transport problem $(P)$, with an
accompanying convergence guarantee in the deterministic case:
recovering the
minimum-energy velocity field from a continuously observed flow of probability
distributions, with a continuum of marginal constraints enforced through the
continuity equation.
The key idea is to embed the weak continuity equation in a reproducing kernel
Hilbert space, yielding a sample-only objective with $O(1/M)$ bias that can be optimized
by mini-batch stochastic gradient descent with a neural-network velocity
parameterization.
The method requires only snapshot samples from the observed marginals and no
spatial grid.

The numerical experiments demonstrate that the RKHS-based
continuity-equation residual provides a reliable supervisory signal:
all trained models substantially reduce the zero-velocity baseline in
both pointwise velocity MSE and distributional metrics.
Neural network parameterization is essential for nonlinear marginal flows,
achieving roughly $4$--$6\times$ lower velocity MSE than affine models.
Marginal constraints are faithfully enforced: ODE simulations driven by the
learned velocity track the true $\mu_t$ accurately across all time slices.

The regularization weight $\lambda$ must currently be tuned by trial
and error. An adaptive selection rule would be desirable.
The computational cost scales quadratically in the mini-batch size
$N$. For large $N$, more efficient approximations such as random
Fourier features \citep{Scholkopf2002} or Nystr\"om subsampling may be
needed.
The convergence of $\hat{\mathcal Q}$ in
Section~\ref{sec:estimator} is pointwise in $u$.  Convergence
of the corresponding minimizers, and rates as $M,N\to\infty$, are
important open problems.

\emph{Stochastic extension and trajectory inference.}
As discussed in Section~\ref{sec:stochastic}, the framework
extends to the Nelson problem with positive noise level
$\sigma > 0$.  A rigorous convergence theory in this setting is
left to future work.  Section~\ref{sec:exp-eb} reports our
results on the EB scRNA-seq benchmark, where we compare the
all-time estimator with pairwise Sinkhorn-based Waddington-OT.
In the canonical neighbour-interpolation setting, pairwise WOT
attains the best marginal-tracking accuracy and the all-time
estimator is competitive, as reported in
Table~\ref{tab:exp-eb}.  A bootstrap-stability diagnostic of the inferred velocity field,
Table~\ref{tab:exp-eb-stability}, shows that the all-time estimator
and nearest-neighbour WOT are about equally stable on a scale-free
measure and that a smoothed WOT field is more stable than both, so
this diagnostic measures the interpolation rule rather than the
estimator.  The smoothed field is close to a translation of the
cloud, tracks the held-out marginal as well as any method, and is
far from the true drift on the synthetic bimodal flow, so held-out
tracking on EB is insensitive to the spatial structure of the
velocity field.
A separate global trajectory-inference framework by Lavenant et
al.\ \citep{lav:2024} is also globally defined, but it requires
$\sigma > 0$ and does not reduce to the deterministic case.  The
two approaches address complementary problem classes by the
value of $\sigma$.  On the synthetic roundtrip benchmark of
Section~\ref{sec:exp-roundtrip}, pairwise WOT cannot recover the
true velocity at intermediate times while the all-time estimator
can.  The EB experiment lacks a ground-truth velocity field, so drift
accuracy cannot be measured there.  A detailed study of these
three axes on real single-cell RNA-seq data, and the systematic
comparison with entropy-based methods such as \citep{lav:2024},
is left to future work.

\emph{Scalable kernel approximations.}
The quadratic dependence on the batch size limits the method to
moderate $N$. Random Fourier features and Nystr\"om approximations
offer a path to linear complexity and to higher spatial dimensions.
A systematic study of these approximations in the all-time OT
setting, including their interaction with the sample estimator
and with the convergence analysis of Section~\ref{sec:3-residual},
is also left to future work.

\emph{Physical applications.}
The mesh-free nature of the method makes it suitable for velocity recovery
in fluid mechanics (particle image velocimetry), crowd dynamics, and other
settings where density observations are available but particle tracking is not. 

\emph{Connection to flow matching.}
The minimizer of $(P)$ is a gradient field (Proposition~\ref{prop:existence}),
which connects to score-based generative models \citep{Song2021} and flow
matching \citep{Lipman2023,Albergo2023,Tong2024}.
Making this connection explicit, and understanding the benefit of all-time
versus two-marginal constraints in generative modeling, is a promising direction. 

\emph{Convergence analysis.}
A rigorous analysis of the estimator ($P_{\lambda}^{M,N}$) as $M,N\to\infty$,
characterizing the trade-off between the RKHS regularization bias and the
statistical variance of the sample estimator, is an important
theoretical open problem.

\subsection*{Acknowledgments}
This study is supported by JSPS KAKENHI Grant Number JP24K06861.

\subsection*{Use of AI assistants}
The author used a large language model (Anthropic Claude) as a
coding and writing assistant during the preparation of this work:
specifically, for refactoring and commenting the Python implementation,
generating baseline-plotting boilerplate, and copy-editing the
manuscript for grammar and clarity.  All mathematical results,
experimental designs, and numerical conclusions were produced and
verified by the author, who takes full responsibility for the
content of this paper.

\bibliographystyle{plainnat}
\bibliography{mybib}

\appendix

\section{Proof of Theorem~\ref{thm:convergence}}
\label{app:proof-convergence}

\begin{proof}
Throughout the proof, for brevity, write
$\mathcal{J}_n:=\int_0^T\|H_t^{u_n}\|^2\,dt\ge 0$ and
$\mathcal{E}_n:=\int_0^T\!\int_{\mathbb{R}^d}|u_n|^2p\,dx\,dt$, so
$J_{\lambda_n}(u_n)=\mathcal{E}_n+\lambda_n\mathcal{J}_n$.
Since $u^\ast\in\mathcal{U}_0$, Theorem
\ref{thm:Ht-in-H} gives $H_t^{u^\ast}=0$ for every $t$, hence
$J_\lambda(u^\ast)=V_0(\mu)$ for every $\lambda$.  Together with
\eqref{eq:eps-opt} this yields
\begin{equation}\label{eq:Jbound}
 J_{\lambda_n}(u_n)\le \inf_{u\in\mathcal{U}}J_{\lambda_n}(u)+\varepsilon_n
 \le J_{\lambda_n}(u^\ast)+\varepsilon_n = V_0(\mu)+\varepsilon_n.
\end{equation}

Step (i). Proof of \eqref{eq:conv-H}.  We follow the
penalty-method argument of \citet[Theorem~3.1]{nak:2024sb}.  
Assume for contradiction that
\[
 \limsup_{n\to\infty}\lambda_n\mathcal{J}_n=5\delta
\]
for some $\delta>0$. Then extract a subsequence along which the lim sup is attained.
By \eqref{eq:Jbound}, the sequence $\{J_{\lambda_n}(u_n)\}$ is bounded, so a
further subsequence (relabelled $\bar\lambda_m=\lambda_{n_m}$,
$\bar u_m=u_{n_m}$, $\bar{\mathcal J}_m=\mathcal{J}_{n_m}$,
$\bar\varepsilon_m=\varepsilon_{n_m}$) satisfies
$\lim_m J_{\bar\lambda_m}(\bar u_m)=\kappa$ for some finite
$\kappa\le V_0(\mu)$ and
$\lim_m\bar\lambda_m\bar{\mathcal J}_m=5\delta$.  Choose $m_0$ with
$\kappa<J_{\bar\lambda_{m_0}}(\bar u_{m_0})+\delta$, then $m_1>m_0$
with $J_{\bar\lambda_{m_1}}(\bar u_{m_1})<\kappa+\delta$,
$\bar\lambda_{m_1}>7\bar\lambda_{m_0}$, and
$3\delta+\bar\varepsilon_{m_0}<\bar\lambda_{m_1}\bar{\mathcal J}_{m_1}<7\delta$.
Using \eqref{eq:eps-opt} and the fact that $u_{m_1}$ is feasible for
$(P_{\bar\lambda_{m_0}})$,
\begin{align*}
 \kappa
 &< J_{\bar\lambda_{m_0}}(\bar u_{m_0}) + \delta
 \le V_{\bar\lambda_{m_0}}(\mu)+\bar\varepsilon_{m_0}+\delta
 \le J_{\bar\lambda_{m_0}}(\bar u_{m_1})+\bar\varepsilon_{m_0}+\delta\\
 &= \mathcal{E}_{m_1}+\frac{\bar\lambda_{m_0}}{\bar\lambda_{m_1}}
     \bar\lambda_{m_1}\bar{\mathcal J}_{m_1}
     +\bar\varepsilon_{m_0}+\delta
 < \mathcal{E}_{m_1}+\frac{\bar\lambda_{m_1}\bar{\mathcal J}_{m_1}}{7}
     +\bar\varepsilon_{m_0}+\delta\\
 &< \mathcal{E}_{m_1} + 2\delta + \bar\varepsilon_{m_0}
 < \mathcal{E}_{m_1}+\bar\lambda_{m_1}\bar{\mathcal J}_{m_1}-\delta
 = J_{\bar\lambda_{m_1}}(\bar u_{m_1})-\delta
 < \kappa,
\end{align*}
a contradiction.  Hence $\lim_n\lambda_n\mathcal{J}_n=0$, proving
\eqref{eq:conv-H}.

Step (ii). Proof of \eqref{eq:conv-J}.  From \eqref{eq:Jbound}
and \eqref{eq:conv-H},
$\limsup_n J_{\lambda_n}(u_n)\le V_0(\mu)$.

We first argue that $\mathcal{E}_n\to V_0(\mu)$.
The upper bound $\mathcal{E}_n\le J_{\lambda_n}(u_n)\le V_0(\mu)+\varepsilon_n$
gives immediately
\begin{equation}\label{eq:En-limsup}
  \limsup_{n\to\infty}\mathcal{E}_n \;\le\; V_0(\mu).
\end{equation}
This  shows $\{u_n\}$ is bounded in the Hilbert space
$L^2(p\,dt\,dx):=L^2([0,T]\times\mathbb{R}^d;\mathbb{R}^d,p(t,x)\,dt\,dx)$.
For the matching lower bound, let $\{u_{n_k}\}$ be any subsequence
along which $\mathcal{E}_{n_k}\to\liminf_n\mathcal{E}_n$.
Since $\{u_{n_k}\}$ is still bounded, by reflexivity we may pass to a
further subsequence (relabelled $u_{n_k}$) with
$u_{n_k}\rightharpoonup  u_\infty$ weakly in $L^2(p\,dt\,dx)$.
For every $\varphi\in\mathcal{H}$,
Theorem~\ref{thm:Ht-in-H} gives
$\langle\varphi,H_t^{u_{n_k}}\rangle =L_t(\varphi;u_{n_k})$.
By Cauchy--Schwarz, $|\langle\varphi,H_t^{u_{n_k}}\rangle|\le\|\varphi\|\,\|H_t^{u_{n_k}}\|$.
Since $\lambda_{n_k}\to\infty$ and $\lambda_{n_k}\mathcal{J}_{n_k}\to 0$
by \eqref{eq:conv-H}, we have $\mathcal{J}_{n_k}\to 0$, that is
$\int_0^T\|H_t^{u_{n_k}}\|^2\,dt\to 0$; passing to a further
subsequence (still denoted $u_{n_k}$) we may assume
$\|H_t^{u_{n_k}}\|\to 0$ for a.e.\ $t\in[0,T]$, and therefore
$\langle\varphi,H_t^{u_{n_k}}\rangle\to 0$ for those $t$.
The functional $L_t(\varphi;\cdot)=\int_0^t\!\int p(s,x)\bigl(\partial_s\varphi
+u(s,x)^{\mathsf T}\nabla_x\varphi\bigr)dx\,ds-\int p(t,x)\varphi(t,x)dx
+\int p(0,x)\varphi(0,x)dx$ is, in its dependence on $u$, the linear
form $u\mapsto\int_0^t\!\int p\,u^{\mathsf T}\nabla_x\varphi\,dx\,ds$.
By (A2) and \citet[Theorem~10.45]{Wendland2005}, every
$\varphi\in\mathcal{H}$ has $\nabla_x\varphi\in C_b\subset L^\infty$;
combined with $p\in L^1\cap L^\infty$ from (A3) this makes
$u\mapsto L_t(\varphi;u)$ a bounded linear functional on
$L^2(p\,dt\,dx)$.  Passing to the weak limit therefore gives
$L_t(\varphi;u_\infty)=0$ for every $\varphi\in\mathcal{H}$ and a.e.\
$t\in[0,T]$.  Since $t\mapsto L_t(\varphi;u_\infty)$ is continuous on
$[0,T]$ (by $p\in C([0,T];L^1)$ from (A3) and the boundedness of
$\varphi,\,\partial_t\varphi,\,\nabla_x\varphi$), the equality
$L_t(\varphi;u_\infty)=0$ extends from a.e.\ $t$ to every $t\in[0,T]$.
We next show that $u_\infty\in\mathcal{U}_0$.  Since
$u_\infty\in L^2(p\,dt\,dx)$ and
each $p(t,\cdot)$ is a probability density on the finite interval
$[0,T]$, Cauchy--Schwarz gives $u_\infty\in L^1(p\,dt\,dx)$, which is
the first condition defining $\mathcal{U}$.  For
$\varphi\in\mathcal{H}$, taking $t=T$ in $L_T(\varphi;u_\infty)=0$ and
integrating $\partial_t p\in L^1$ by parts in time yields
\begin{equation}\label{eq:div-identification}
 \bigl\langle \mathrm{div}_x(p\,u_\infty),\varphi\bigr\rangle
 = -\bigl\langle \partial_t p,\varphi\bigr\rangle ,
 \qquad \varphi\in\mathcal{H},
\end{equation}
the left-hand side being the distributional divergence, well defined
because $u_\infty\in L^1(p\,dt\,dx)$ and $\nabla_x\varphi$ is bounded.
Both sides of~\eqref{eq:div-identification} are continuous for the
topology of uniform convergence on compact sets, and by (A1)
$\mathcal{H}$ is universal on every compact subset of
$[0,T]\times\mathbb{R}^d$, so the identity extends from $\mathcal{H}$
to $C_0^\infty([0,T]\times\mathbb{R}^d)$.  It therefore determines the
distribution $\mathrm{div}_x(p\,u_\infty)$ as the $L^1$ function
$-\partial_t p$ supplied by (A3).  In particular
$\mathrm{div}_x(p\,u_\infty)\in L^1$, so $u_\infty\in\mathcal{U}$, and
the residual $D^{u_\infty}=\partial_t p+\mathrm{div}_x(p\,u_\infty)$
vanishes as a distribution.  The weak continuity
equation~\eqref{eq:3.1} thus holds against every
$\varphi\in C_0^\infty([0,T]\times\mathbb{R}^d)$.  This extends to
every $\varphi\in C_b^1$ by a standard mollification argument.
For $\varphi\in C_b^1$ choose $\varphi_n\in C_0^\infty$ with
$\varphi_n\to\varphi$, $\partial_t\varphi_n\to\partial_t\varphi$ and
$\nabla_x\varphi_n\to\nabla_x\varphi$ pointwise, with
$\sup_n(\|\varphi_n\|_\infty+\|\partial_t\varphi_n\|_\infty
+\|\nabla_x\varphi_n\|_\infty)<\infty$.  Then $p\in L^1$ and
$u_\infty\in L^1(p\,dt\,dx)$ allow dominated convergence in every
term of \eqref{eq:weak-CE}.  Hence $u_\infty\in\mathcal{U}_0$.
By weak lower semicontinuity of the norm along
$u_{n_k}\rightharpoonup u_\infty$, and by the choice of the
subsequence $\{u_{n_k}\}$,
\[
 \mathcal{E}_\infty:=\int_0^T\!\!\int_{\mathbb{R}^d} |u_\infty(t,x)|^2p(t,x)\,dx\,dt
 \le \liminf_k\mathcal{E}_{n_k}
 = \liminf_n\mathcal{E}_n.
\]
Since $u_\infty\in\mathcal{U}_0$, also $\mathcal{E}_\infty\ge V_0(\mu)$,
so
\[
 V_0(\mu) \le \mathcal{E}_\infty \;\le\; \liminf_{n\to\infty}\mathcal{E}_n.
\]
Together with \eqref{eq:En-limsup} this gives $\mathcal{E}_n\to V_0(\mu)$
and $u_\infty$ is a minimizer of $(P)$.
Combined with \eqref{eq:conv-H},
$J_{\lambda_n}(u_n)=\mathcal{E}_n+\lambda_n\mathcal{J}_n\to V_0(\mu)$.

Step (iii).  Proof of \eqref{eq:sup-inf}.  Since
$u^\ast\in\mathcal{U}_0$, Theorem~\ref{thm:Ht-in-H} gives
$\|H_t^{u^\ast}\|=0$, so for every $\lambda>0$,
\[
 \inf_{u\in\mathcal{U}}J_\lambda(u)\le J_\lambda(u^\ast) = V_0(\mu),
\]
whence $\sup_{\lambda>0}\inf_u J_\lambda(u)\le V_0(\mu)$.  Conversely,
by \eqref{eq:eps-opt},
\[
 J_{\lambda_n}(u_n)\le \inf_u J_{\lambda_n}(u)+\varepsilon_n
 \le \sup_\lambda\inf_u J_\lambda(u)+\varepsilon_n ,
\]
and letting $n\to\infty$ using \eqref{eq:conv-J} yields
$V_0(\mu)\le\sup_\lambda\inf_u J_\lambda(u)$.  This proves
\eqref{eq:sup-inf}.

Step (iv). Existence, uniqueness, and $L^2$-convergence.  Step~(ii)
shows that the weak limit $u_\infty$ lies in $\mathcal{U}_0$ and
attains $V_0(\mu)$, so \eqref{eq:P0} admits a minimizer.  It is unique
up to $p\,dt\,dx$-null sets, by convexity.  The
constraint~\eqref{eq:weak-CE} is affine in $u$ and the two conditions
defining $\mathcal{U}$ are convex, so the finite-energy region of
$\mathcal{U}_0$ that \eqref{eq:P0} searches is convex, while the
energy $u\mapsto\int_0^T\!\int_{\mathbb{R}^d}|u|^2p\,dx\,dt$ is
strictly convex on it.  If $u_1,u_2\in\mathcal{U}_0$ both attained
$V_0(\mu)$ while differing on a set of positive $p\,dt\,dx$ measure,
their midpoint would lie in $\mathcal{U}_0$ and satisfy
$\int_0^T\!\int_{\mathbb{R}^d}|\tfrac{u_1+u_2}{2}|^2p\,dx\,dt<V_0(\mu)$, a
contradiction.  Write $u^\ast$ for the minimizer.  By Step~(ii) every
weak cluster point of $\{u_n\}$ is a minimizer, hence equals
$u^\ast$.  Since every weakly convergent
subsequence of the bounded sequence $\{u_n\}\subset L^2(p\,dt\,dx)$
has this same limit, the full sequence satisfies
\begin{equation}\label{eq:un-weak}
 u_n\rightharpoonup u^\ast\qquad\text{weakly in }L^2(p\,dt\,dx;\mathbb{R}^d).
\end{equation}
Moreover $\mathcal{E}_n\to V_0(\mu)=\int|u^\ast|^2p\,dx\,dt$.

Because $V_0(\mu)<\infty$ and $u^\ast$ attains it, $u^\ast$ lies in
$L^2(p\,dt\,dx;\mathbb{R}^d)$ and may therefore be used as its own test
element in~\eqref{eq:un-weak}, which gives
\[
 \lim_{n\to\infty}\int_0^T\int_{\mathbb{R}^d} p(t,x)u_n^{\mathsf{T}}(t,x)u^\ast(t,x)\,dx\,dt
 =\int_0^T\int_{\mathbb{R}^d} p(t,x)|u^\ast(t,x)|^2\,dx\,dt.
\]
Combining this cross-term convergence with norm convergence
$\mathcal{E}_n\to\int|u^\ast|^2p\,dx\,dt$, we obtain 
\begin{align*}
 &\int_0^T\int_{\mathbb{R}^d} |u_n(t,x)-u^\ast(t,x)|^2p(t,x)\,dx\,dt \\ 
 &= \mathcal{E}_n
   - 2\int_0^T\int_{\mathbb{R}^d} p(t,x)u_n^{\mathsf{T}}(t,x)u^\ast(t,x)\,dx\,dt
   + \int_0^T\int_{\mathbb{R}^d} p(t,x)\,|u^\ast(t,x)|^2\,dx\,dt
 \;\longrightarrow\; 0. 
\end{align*}
Thus \eqref{eq:conv-u-L2} follows.
\end{proof}

\section{Expansion of the penalty term}
\label{app:penalty-expansion}

\begin{proof}
Decompose $H_t^u=A_1+A_2+A_3\in\mathcal{H}$ where
\begin{equation}\label{eq:H-split}
\begin{aligned}
 A_1 &= \int_0^t\!\!\int_{\mathbb{R}^d}p(r,x)\mathcal{A}^u_{r,y}K(\cdot,r,x)\,dx\,dr, \\
 A_2 &= - \int_{\mathbb{R}^d}p(t,x)K(\cdot,t,x)\,dx, \\ 
 A_3 &= \int_{\mathbb{R}^d}p(0,x)K(\cdot,0,x)\,dx. 
\end{aligned}
\end{equation}
Expanding the squared norm gives 
\begin{equation}\label{eq:square-expand}
 \|H_t^u\|^2 = \|A_1\|^2 + \|A_2\|^2 + \|A_3\|^2
 + 2\langle A_1,A_2\rangle + 2\langle A_1,A_3\rangle + 2\langle A_2,A_3\rangle.
\end{equation}
We evaluate each contribution using the reproducing property. 
By \eqref{eq:RKHS-norm-integral}
applied on $Q=[0,t]\times\mathbb{R}^d$,
\[
 \|A_1\|^2
 = \int_0^t\!\!\int_0^t\!\!\int_{\mathbb{R}^d}\!\!\int_{\mathbb{R}^d}
 p(s,x)p(r,y)\mathcal{A}^u_{s,x}\mathcal{A}^u_{r,y}K(s,x,r,y)\,dx\,dy\,ds\,dr
 \;=\; I_1(t),
\]
on identifying $p(r,y)dy$ with the law of $\tilde{X}_r\sim\mu_r$.
Next, a direct use of
$\langle K(\cdot,y),K(\cdot,y')\rangle  = K(y,y')$ yields
\[
 \|A_2\|^2 = \int_{\mathbb{R}^d}\int_{\mathbb{R}^d} p(t,x)p(t,x^{\prime})K(t,x,t,x^{\prime})\,dx\,dx^{\prime} = I_2(t)
\]
and $\|A_3\|^2 = I_3(t)$.  Applying the reproducing
property with $\varphi=A_1\in\mathcal{H}$, we find 
\begin{align*}
 \langle A_1,A_2\rangle
 &= -\int_{\mathbb{R}^d} p(t,x^{\prime})\langle A_1,K(\cdot,t,x^{\prime})\rangle\,dx^{\prime}
  = -\int_{\mathbb{R}^d} p(t,x^{\prime})\,A_1(t,x^{\prime})\,dx^{\prime}\\
 &= -\int_0^t\int_{\mathbb{R}^d}\int_{\mathbb{R}^d} p(r,x)p(t,x^{\prime})\,
      \mathcal{A}^u_{r,y}K(t,x^{\prime},r,x)\,dx\,dx^{\prime}\,dr
   = -I_4(t).
\end{align*}
By the same reasoning with $A_2$ replaced by $-A_3$ and the time argument $t\mapsto 0$,
\[
 \langle A_1,A_3\rangle
 = \int_{\mathbb{R}^d} p(0,x^{\prime})A_1(0,x^{\prime})\,dx^{\prime}
 = I_6(t).
\]
Further, directly,
\[
 \langle A_2,A_3\rangle
 = - \int_{\mathbb{R}^d}\int_{\mathbb{R}^d} p(t,x)p(0,x^{\prime})K(t,x,0,x^{\prime})\,dx\,dx^{\prime}
 = -I_5(t).
\]
Collecting all six contributions in \eqref{eq:square-expand} yields
\eqref{eq:Ht-six-terms}.
\end{proof}

\section{Sensitivity analysis in $(M, N, \lambda)$}
\label{sec:exp-sensitivity}

\paragraph{Choosing the bandwidth.}
The sweep below holds the bandwidth $h$ fixed, so we first explain how
$h$ is chosen.  All synthetic experiments use $h=1$ and the single-cell
experiment uses $h=8$.  Both values come from one rule.  The kernel
acts on space-time, and the relevant quantity is the size of a typical
off-diagonal entry $K(\xi_p,\xi_q)$ on the sample.  An entry near one
makes the penalty insensitive to differences between sample points.
An entry near zero makes the kernel matrix numerically diagonal, and
the penalty then contains no cross-sample information.  We therefore
target entries of order $10^{-1}$.  For marginals with unit covariance,
two independent draws satisfy $\mathbb{E}|\Delta x|^2=2d$, so an
isotropic kernel with fixed $h$ has typical entry $\exp(-d/h^2)$.  At
$h=1$ this is $0.37$ in dimension one and $0.14$ in dimension two, both
in the target range.  The $30$-dimensional principal-component
representation of Section~\ref{sec:exp-eb} has a per-coordinate
standard deviation near $2$, and the same rule gives $h=8$, with
typical entries between $0.14$ and $0.22$.  The rule also predicts
where a fixed $h$ fails.  At $d=10$ with $h=1$ the typical entry is
$4.5\times10^{-5}$, and the estimator degrades accordingly, as
Appendix~\ref{sec:exp-dim-scaling} reports.

The main-text experiments all fix the sampling budget and the penalty
weight $\lambda$ at hand-chosen values.  To quantify the robustness of
the estimator to these choices, we perform a one-at-a-time sensitivity
analysis on the simplest setting, Experiment~1 ($d=1$, $\sigma=0$,
$\mu_t=\mathcal{N}(-1+2t,1)$, $u^{\!*}(t,x)\equiv 2$).  We sweep each of
\[
  M \in \{10,15,20,30,50\}, \quad
  N \in \{5,10,15,25,40\}, \quad
  \lambda \in \{10^{1},10^{2},10^{3},10^{4},10^{5}\},
\]
keeping the other two parameters at their default values
$(M_0, N_0, \lambda_0) = (30, 20, 10^{3})$.  For every
$(M, N, \lambda)$ point we run $K_{\mathrm{seed}}=4$ independent
realizations, each of which ensemble-averages $K_{\mathrm{ens}}=5$
RNG draws inside the L-BFGS-B objective.  Grid
MSE$(\hat u, u^{\!*})$ is computed on
$[0,1]\times[-3,3]$.

Table~\ref{tab:exp7} reports mean $\pm$ standard deviation across
seeds, and Figure~\ref{fig:exp7} plots the three sweeps on log-log
axes.

\begin{table}[ht]
\centering
\small
\begin{tabular}{lccccc}
\toprule
 $M$      & 10 & 15 & 20 & 30 & 50 \\
 MSE      & $0.821\pm0.426$ & $0.180\pm0.091$ & $0.083\pm0.009$
          & $\mathbf{0.054\pm0.013}$ & $0.111\pm0.098$ \\
\midrule
 $N$      & 5 & 10 & 15 & 25 & 40 \\
 MSE      & $0.267\pm0.119$ & $0.073\pm0.052$ & $0.130\pm0.084$
          & $\mathbf{0.062\pm0.071}$ & $0.107\pm0.097$ \\
\midrule
 $\lambda$& $10$ & $10^{2}$ & $10^{3}$ & $10^{4}$ & $10^{5}$ \\
 MSE      & $1.934\pm0.045$ & $0.346\pm0.065$ & $\mathbf{0.054\pm0.013}$
          & $0.096\pm0.053$ & $0.104\pm0.059$ \\
\bottomrule
\end{tabular}
\caption{Appendix~\ref{sec:exp-sensitivity}: grid MSE mean $\pm$ std over
$K_{\mathrm{seed}}=4$ seeds for one-at-a-time sweeps in $M$, $N$ and
$\lambda$ on the base Experiment~1 problem.  Bold entries mark the
best setting in each row.  Defaults are
$(M, N, \lambda)=(30, 20, 10^{3})$.}
\label{tab:exp7}
\end{table}

Three observations emerge.  The \textbf{$M$-dependence} is nearly
monotone decreasing up to $M\approx 30$ and then degrades slightly:
the monotone phase reflects the standard quadrature-error reduction of
the outer time integral; at $M=50$, adjacent time slices are
$\approx 0.02$ apart, well below $h=1$, so the block-diagonal mask
removes essentially no pairs and the bias correction becomes
ineffective.  This is consistent with the heuristic $M\asymp h^{-1}T$.
The \textbf{$N$-dependence} improves sharply from $N=5$ to $N=10$ and
then saturates: with only $8$ free parameters, the error becomes
quadrature-limited rather than particle-limited, and increasing $N$
beyond $25$ brings little benefit.  The \textbf{$\lambda$-dependence}
exhibits a clear sweet spot near $\lambda=10^3$.  At $\lambda=10$ the
penalty is too weak and the optimizer collapses to $u\approx 0$ (MSE
approaches $(u^*)^2=4$); for $\lambda\gtrsim 10^4$ the penalty
approaches a hard constraint and finite-sample noise gradually
degrades the estimator.  The plateau spans roughly two decades
($10^3\le\lambda\le 10^5$), so fine-grained tuning is not required.

\begin{figure}[ht]
\centering
\includegraphics[width=\linewidth]{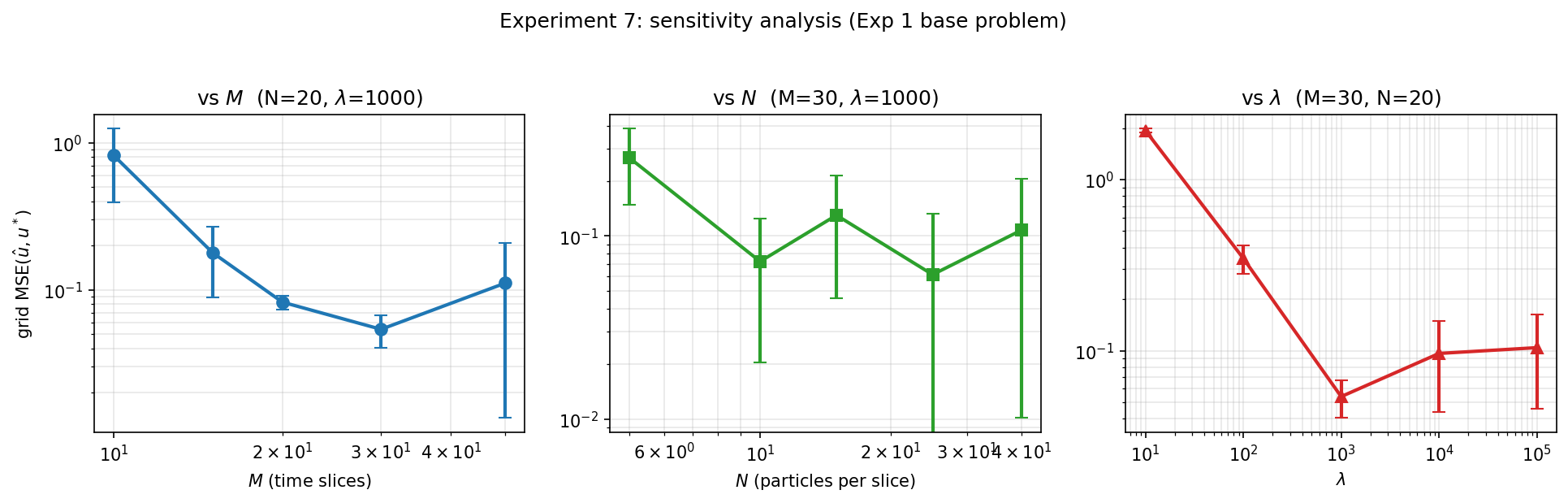}
\caption{Appendix~\ref{sec:exp-sensitivity}: sensitivity of the grid MSE to the three main
hyperparameters $M$, $N$, $\lambda$ on the Experiment~1 base
problem.  Each point is the mean over
$K_{\mathrm{seed}}=4$ seeds (with $K_{\mathrm{ens}}=5$ RNG draws per
seed) and error bars are one standard deviation.  Defaults are
$(M_0,N_0,\lambda_0)=(30, 20, 10^{3})$.}
\label{fig:exp7}
\end{figure}

In summary, the estimator prefers $M\asymp h^{-1}T$, saturates in $N$
beyond $N\simeq 15$, and has a two-decade plateau in $\lambda$.
Excluding the two under-resolved settings ($M=10$ and $\lambda=10$),
the worst-case MSE remains below $0.2$.  We recommend the defaults
$(M,N,\lambda)\approx(30,20,10^3)$ and re-tuning only $\lambda$ when
the magnitude of the RKHS residual changes.

\section{Dimension scaling}
\label{sec:exp-dim-scaling}

Experiment~4 established that the dimension-independent kernel operator
formula $[-a^{2}\tau\tau' + a(1+u^{\mathsf{T}} u')]K$ extends Experiment~1 to
$d=2$ without modification.  In this appendix we push the same Gaussian
translation problem to higher spatial dimensions and quantify how the
accuracy and the wall-clock time scale with $d$.

For each $d\in\{1,2,3,5,8,10\}$ we take $T=1$, $\sigma=0$ and
\[
  \mu_t = \mathcal{N}\left(\frac{t}{\sqrt{d}}\mathbf{1}_d, \mathbf{I}_{d}\right),
  \quad t\in[0,1].
\]
The all-ones direction is rescaled by $1/\sqrt{d}$ so that the optimal
drift $u^{*}(t,x)\equiv\mathbf{1}_d/\sqrt{d}$ has unit Euclidean norm
$|u^{*}|=1$ independently of $d$.  This normalization makes the
total grid MSE $\mathbb{E}|\hat u-u^{\!*}|^{2}$ directly comparable
across dimensions.

Same affine linear-in-parameters dictionary as in Experiment~4,
extended to any~$d$:
\[
  u_i(t, x) = w_{i,0} + w_{i,1}\,t + \sum_{j=1}^{d} w_{i,j+1}\,x_j,
  \quad i = 1, \ldots, d,
\]
so the total parameter count is $n_{\mathrm{params}} = d(d+2)$.

Same as Experiment~4: $h=1$, $\lambda=10^{3}$, $M=25$ time slices,
$N=20$ particles per slice, $N_{0}=50$ initial particles,
$K_{\mathrm{ens}}=5$ ensemble draws inside the objective,
$K_{\mathrm{seed}}=10$ independent realizations per dimension.
The grid MSE is estimated by Monte-Carlo sampling $2{,}000$ uniform
points in $[-3,3]^{d}$ at each of $15$ time slices.

Table~\ref{tab:exp8} reports the mean and standard deviation of the
total grid MSE, the per-component MSE (i.e.\ total MSE divided
by~$d$), the average wall-clock optimization time and the parameter
count.

\begin{table}[ht]
\centering
\small
\begin{tabular}{rrrrrr}
\toprule
 $d$ & $n_{\mathrm{params}}$ & total MSE (mean $\pm$ std)
     & median MSE (IQR) & per-dim med.\ MSE & time (s) \\
\midrule
  1 &    3 & $0.139 \pm 0.145$ & $0.098$ $[0.050,\,0.180]$ & $0.098$ & $0.4$ \\
  2 &    8 & $0.701 \pm 0.394$ & $0.700$ $[0.380,\,0.881]$ & $0.350$ & $1.3$ \\
  3 &   15 & $0.697 \pm 0.407$ & $0.582$ $[0.482,\,0.793]$ & $0.194$ & $2.2$ \\
  5 &   35 & $0.816 \pm 0.193$ & $0.761$ $[0.684,\,1.018]$ & $0.152$ & $3.2$ \\
  8 &   80 & $0.674 \pm 0.101$ & $0.669$ $[0.596,\,0.759]$ & $0.084$ & $2.4$ \\
 10 &  120 & $0.726 \pm 0.072$ & $0.733$ $[0.709,\,0.761]$ & $0.073$ & $2.4$ \\
\bottomrule
\end{tabular}
\caption{Appendix~\ref{sec:exp-dim-scaling}: dimension scaling of the affine RKHS all-time
OT estimator on $\mu_t=\mathcal{N}((t/\sqrt{d})\mathbf{1}_d, \mathbf{I}_d)$
with $|u^{*}|=1$.  Mean~$\pm$~standard deviation and median with
inter-quartile range over $K_{\mathrm{seed}}=10$ seeds.  The per-dim
median MSE is the median total MSE divided by~$d$.  The mean and
median agree to within one standard error across all dimensions,
confirming that no single seed is driving the trend.}
\label{tab:exp8}
\end{table}

Three features are worth noting.

(i)~\textbf{Per-component error is essentially dimension-free.}  The
total grid MSE $\mathbb{E}|\hat u-u^{*}|^{2}$ saturates at roughly
$0.7$ for all $d\ge 2$, both in mean and in median.  Since this
quantity is a sum over the $d$ output components, the per-component
median MSE $\approx 0.7/d$ actually \emph{decreases} with $d$, and
for $d\ge 5$ it is of the same order as, or smaller than, the $d=1$
benchmark of $\approx 0.1$.  The estimator thus does not suffer
from a curse of dimensionality on this simple constant-drift
problem.  We previously reported the same conclusion from
$K_{\mathrm{seed}}=3$ runs in which one $d=2$ seed produced a noisy
realization (MSE $\approx 2.6$) that inflated the mean; with
$K_{\mathrm{seed}}=10$ the median is robust to such outliers, and
the mean is brought back into agreement with it.

(ii)~\textbf{The plateau reflects an identifiability invariant, not
an algorithmic failure.}  This is the same divergence-free
perturbation set discussed in the cross-experiment summary at the
end of Section~\ref{sec:exp-2d-bifurcation}: for $d\ge 2$ there are
non-trivial $p$-weighted divergence-free fields $v$ whose addition
to $u^{*}$ leaves the marginal flow invariant, and finite samples
plus finite~$\lambda$ allow the estimator to drift along these
directions.  In higher dimensions this valley has more degrees of
freedom, which explains why the per-coefficient deviation
$|\widehat W - W^{*}|$ does not vanish.  What remains controlled
is the push-forward marginal error, which is the quantity that the
variational objective actually penalizes.

(iii)~\textbf{Runtime scales mildly.}  The optimization time grows from
$0.6$~s at $d=1$ to $\approx 5$~s at $d=10$, even though the parameter
count increases by a factor of $40$ (from $3$ to $120$).  The L-BFGS-B
iteration count remains between $25$ and $50$ across the sweep, so the
dominant cost is the per-iteration gradient assembly, which in our
implementation consists of an $\mathcal{O}(d)$ loop over spatial
components inside the fixed-cost $MN\times MN$ kernel block.  The
empirical runtime, plotted in Figure~\ref{fig:exp8}, is therefore
consistent with the predicted
$\mathcal{O}(d\cdot M^2 N^2)$ complexity and is dominated by the
$(MN)^{2}=2.5\times 10^{5}$ kernel block rather than by the number
of parameters.

\begin{figure}[ht]
\centering
\includegraphics[width=\linewidth]{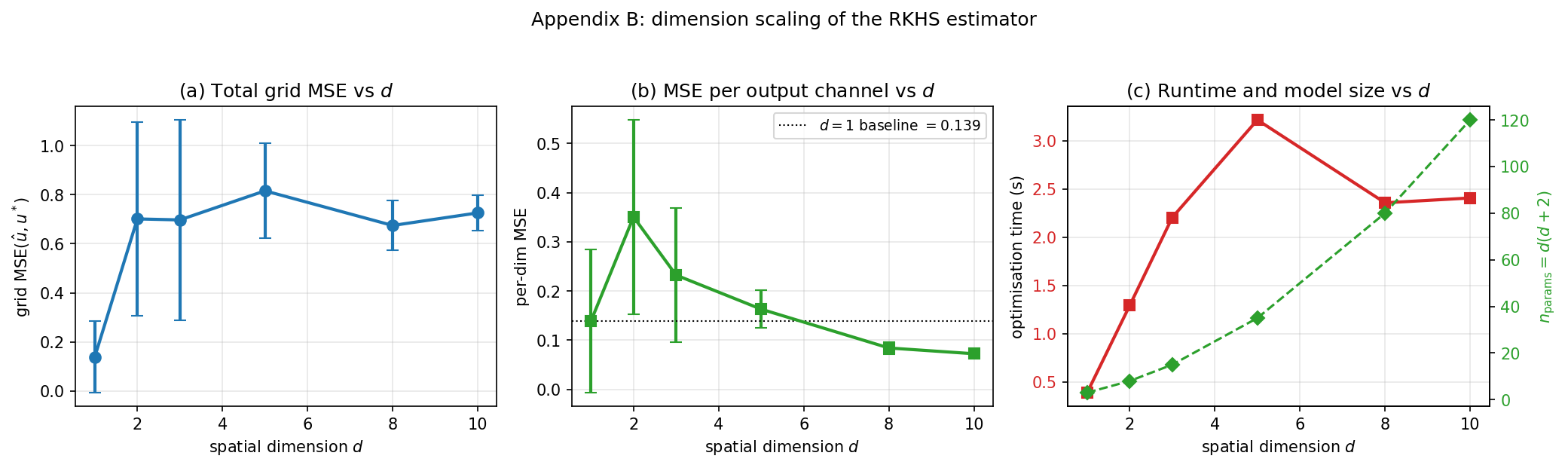}
\caption{Appendix~\ref{sec:exp-dim-scaling}: dimension scaling.  (a) Total grid
MSE$(\hat u, u^{*})$ vs.\ $d$; (b) per-component MSE, showing
the near dimension-free scaling of the estimator; (c) wall-clock
optimization time (red) and parameter count $n_{\mathrm{params}}=d(d+2)$
(green, right axis).  Error bars are one standard deviation over
$K_{\mathrm{seed}}=10$ seeds.}
\label{fig:exp8}
\end{figure}

In summary, on a problem where $|u^{*}|$ is held constant, the
estimator exhibits essentially dimension-free per-component accuracy,
and its runtime grows quasi-linearly in $d$ at a fixed sample budget.
The remaining per-coefficient slack is a consequence of the OT
identifiability valley, not of any algorithmic limitation.

\end{document}